\newfont{\cyrr}{wncyr10}
\renewcommand{\H}{\mathbb H}
\renewcommand{\Im}{\mathrm{Im}}
\renewcommand{\Re}{\mathrm{Re}}
\newcommand{\GL}{{\rm GL}}
\newcommand{\SO}{{\rm SO}}
\newcommand{\GSp}{{\rm GSp}}
\newcommand{\Sp}{{\rm Sp}}
\newcommand{\PGSp}{{\rm PGSp}}
\newcommand{\sym}{{\rm sym}}
\newcommand{\Cl}{{\rm Cl}}
\newcommand{\bs}{\backslash}
\newcommand{\eps}{\epsilon}
\newcommand{\disc}{{\rm disc}}
\newcommand{\G}{{\Gamma}}
\newcommand{\h}{{\mathcal H}}
\newcommand{\n}{{\mathcal N}}
\newcommand{\N}{{\mathbb N}}
\newcommand{\Z}{{\mathbb Z}}
\newcommand{\Q}{{\mathbb Q}}
\newcommand{\R}{{\mathbb R}}
\newcommand{\C}{{\mathbb C}}
\newcommand{\A}{{\mathbb A}}
\newcommand{\SL}{{\rm SL}}
\renewcommand{\mod}{{\, \rm mod \, }}
\newcommand{\mat}[4]{{\setlength{\arraycolsep}{0.5mm}\left[
\begin{smallmatrix}#1&#2\\#3&#4\end{smallmatrix}\right]}}
\newcommand{\corref}[1]{Corollary~\ref{#1}}
\newcommand{\thmref}[1]{Theorem~\ref{#1}}
\newcommand{\propref}[1]{Proposition~\ref{#1}}
\newcommand{\lemref}[1]{Lemma~\ref{#1}}
\newtheorem{thm}{Theorem}[subsection]
\newtheorem{lem}[thm]{Lemma}
\newtheorem{cor}[thm]{Corollary}
\newtheorem{prop}[thm]{Proposition}
\newtheorem{rmk}[thm]{Remark}
\newtheorem{conj}[thm]{Conjecture}
\begin{document}

\title[Fourier coefficients and Hecke eigenvalues]
{On Fourier coefficients and Hecke eigenvalues of Siegel cusp forms of
degree $2$}
\author{Biplab Paul and Abhishek Saha}

\address[Biplab Paul]
{School of Mathematical Sciences, Queen Mary University of London,
Mile End Road, London, E1 4NS, UK.
\newline
Current Address: Chennai Mathematical Institute, H1, SIPCOT IT Park,
Siruseri, Kelambakkam 603103, Tamil Nadu, India.}
\email{bpaul@cmi.ac.in, paulbiplab0@gmail.com}

\address[Abhishek Saha]
{School of Mathematical Sciences, Queen Mary University of London,
Mile End Road, London, E1 4NS, UK.}
\email{abhishek.saha@qmul.ac.uk}

\date{\today}
\subjclass[2010]{11F30, 11F46}
\keywords{Siegel cusp forms, Hecke eigenvalues, Fourier coefficients,
elliptic modular forms}

\begin{abstract}
We investigate some key analytic properties of Fourier coefficients and Hecke
eigenvalues attached to scalar-valued Siegel cusp forms $F$ of degree 2, weight $k$ and level $N$.
First, assuming that $F$ is a Hecke eigenform that is not of Saito-Kurokawa
type, we prove an improved bound in the $k$-aspect for the smallest
prime at which its Hecke eigenvalue is negative. Secondly, we show
that there are infinitely many sign changes among the Hecke eigenvalues of $F$
at primes lying in an arithmetic progression. Third, we show that there are
infinitely many positive as well as infinitely many negative Fourier
coefficients in any ``radial" sequence comprising of
prime multiples of a fixed fundamental matrix. Finally we consider
the case when $F$ is of Saito--Kurokawa type, and in this case we
prove the (essentially sharp) bound
$| a(T) |
~\ll_{F, \epsilon}~
\big( \det T \big)^{\frac{k-1}{2}+\epsilon}$
for the Fourier coefficients of $F$ whenever $\gcd(4 \det(T), N)$ is squarefree,
confirming a conjecture made (in the case $N=1$)
by Das and Kohnen. 
\end{abstract}

\maketitle

\section{Introduction}
For positive integers $n, k$, and $N$, let $S_k(\Gamma^{(n)}_0(N))$
be the space of holomorphic cusp forms of weight $k$ for the Siegel-type
congruence subgroup $\Gamma^{(n)}_0(N) \subseteq \mathrm{Sp}_{2n}(\Z)$
of level $N$ and degree $n$. Any $F \in S_k(\Gamma^{(n)}_0(N))$
has a Fourier expansion
$$
F(Z)
~=~
\sum_{T \in \n_n} a(T)e^{2\pi i \rm{tr}(TZ)},
$$
where
$$
\n_n ~:=~ \big\{ T = (t_{ij})_{n \times n} ~|~ 2t_{ij}, t_{ii} \in \Z, ~T^t = T,  ~T > 0 \big\}.
$$
Here and throughout the article, the symbol $T > 0$ means $T$ is
positive definite. If $F$ is a Hecke eigenform for the Hecke operators $T(m)$ with $\gcd(m,N)=1$, we denote its Hecke
eigenvalues by $\eta(m)$. Both the Fourier coefficients $a(T)$ and
Hecke eigenvalues $\eta(m)$ are key objects and they, separately
and together, play an important role in understanding the Hecke
eigenforms and hence all cusp forms.

It is well-known that when $n = 1$, i.e., when $F$ is an elliptic Hecke eigenform,
the Fourier coefficients and Hecke eigenvalues coincide up to a certain normalization.
In this case, one can exploit the multiplicative properties of $\eta(m)$, the Ramanujan
bound, and the Hecke relations to study sign change questions for $\eta(m)$.
Some interesting results in this set-up can be found in
\cite{KS06, KLSW10, KM12, MR15, GKP19, GMP21}
and the references therein.

In the case of Siegel cusp forms of degree $n=2$, Hecke eigenvalues and
Fourier coefficients of a Hecke eigenform are not related in any simple
way. Further, it is not clear how to make sense of multiplicative properties
of Fourier coefficients (which are indexed by matrices). Indeed, Fourier
coefficients are closely related to central values of degree 8 $L$-functions
and are more mysterious than the
Hecke eigenvalues \cite{DPSS20, JLS21+, FurMori}. For example, even
though an analogue of Ramanujan's conjecture is known for the Hecke
eigenvalues corresponding to an eigenform of degree $n=2$, the analogous
conjecture for Fourier coefficients (which is famously known as
Resnikoff--Salda\~na conjecture \cite{RS74})
is not known yet. For recent progress towards the Resnikoff--Salda\~na conjecture,
see \cite{JLS21+}. In this paper we investigate the following questions for $n=2$:
\begin{itemize}
\item
Bounds for the first prime for which the Hecke eigenvalue is negative
(analogue of least quadratic non-residue).

\item
Sign changes of Hecke eigenvalues at primes lying in any fixed arithmetic progression.

\item
Behaviour of signs of Fourier coefficients of a non-zero
cusp form (not necessarily a Hecke eigenform) along a sequence of
prime multiples of a fixed matrix.

\item
Bounds for the Fourier coefficients of a cusp form lying in the
generalized Maass subspace.
\end{itemize}
Throughout this paper, we allow the level $N$ to be any integer,
in contrast to many previous related works which were restricted to $N=1$
or $N$ squarefree.

\subsection{Main Results on signs of Hecke eigenvalues}
In the case $N=1$, the space $S_k(\Sp_4(\Z)) = S_k(\G_0^{(2)}(1))$ of Siegel cusp forms of weight $k$ and full level
has a natural subspace denoted
$S_k^*(\Sp_4(\Z))$ and called the \emph{Maass subspace}, which is the span
of the \emph{Saito-Kurokawa lifts} of full level. The Saito-Kurokawa lifts of full level can be
explicitly constructed from classical half-integral weight forms via the theory
of Jacobi forms  \cite[\S6]{EZ85}; they may also be viewed as lifts of classical
integral weight forms thanks to the Shimura correspondence between
half integral weight and integral weight forms. It is known \cite{SB99, GPS18}
that all the Hecke eigenvalues of a Hecke eigenform lying in the
Maass subspace are positive. Indeed, for a Hecke eigenform $F$ lying in the
Maass subspace, there exists a classical Hecke eigenform
$f \in S_{2k-2}(\SL_2(\Z))$ such that
\begin{equation}\label{SK}
\lambda_F(p) = \lambda_f(p) + p^{1/2} + p^{-1/2}
\end{equation}
where $\lambda_F(p)$ (resp. $\lambda_f(p)$) are the normalized Hecke
eigenvalues of $F$ (resp., $f$). The relation \eqref{SK} clearly shows the
positivity of the quantities $\lambda_F(p)$ (since we know that $|\lambda_f(p)| \le 2$).
A simple argument extends this positivity result to $\lambda_F(n)$ for
all $n$. Therefore, the topic of \emph{signs of Hecke eigenvalues} of Hecke eigenforms in $S_k(\Sp_4(\Z))$ is interesting only for
forms that lie in the orthogonal complement\footnote{It is easy to see that if a Hecke eigenform does not lie in $S_k^*(\Sp_4(\Z))$, then it must be contained in the orthogonal complement of $S_k^*(\Sp_4(\Z))$.} of $S_k^*(\Sp_4(\Z))$.

For general $N$, we define a natural generalization of the Maass subspace
which we call the \emph{generalized Maass subspace} and denote by $S_k^*(\G_0^{(2)}(N))$. The space $S_k^*(\G_0^{(2)}(N))$
is defined to be the span of certain Hecke eigenforms $F$ that we say are of \emph{classical Saito-Kurokawa type}.
These eigenforms  of classical Saito-Kurokawa type are defined by us in the
language of Arthur packets (see Section \ref{s:arthur}) and turn out to have the property
that their Hecke eigenvalues at primes $p \nmid N$ are all positive. We remark that there is also a separate
explicit construction of Saito-Kurokawa lifts for $\G_0^{(2)}(N)$ for general $N$
via the route of Jacobi forms, due to Ibukiyama \cite{TI12} (see also Agarwal--Brown \cite{AB15}).
 To the best of our knowledge it is not known whether the  Saito-Kurokawa lifts of trivial character constructed by Ibukiyama are sufficient to fully capture\footnote{It can be shown that if $N$ is \emph{squarefree} then the level $N$ newforms that are of classical Saito-Kurokawa type in our sense are exactly the same as the Saito--Kurokawa lifts of newforms of level $N$ constructed via Jacobi forms in works of Ibukiyama and Agarwal--Brown; so the main question is what happens for non-squarefree levels.} our generalized Maass space $S_k^*(\G_0^{(2)}(N))$ as defined in Section \ref{s:arthur}.

As mentioned above, for eigenforms
of classical Saito-Kurokawa type the Hecke eigenvalues at primes not dividing the level are all positive. Consider an eigenform that is \emph{not}
of classical Saito-Kurokawa type. Our first result provides a bound in the $k$-aspect for
the smallest prime at which the Hecke eigenvalue is negative.

\begin{thm}\label{First-sign}
Let $k$ and $N$ be positive integers. Let $F \in S_k(\G_0^{(2)}(N))$ be a Hecke eigenform with Hecke
eigenvalues $\eta(m)$ for all positive integers $m$ satisfying $\gcd(m, N)=1$. Assume that $F$ does not lie in $S_k^*(\G_0^{(2)}(N))$. If $k=2$ and $F$ is not attached to a CAP representation, then assume further that $F$ satisfies the Ramanujan conjecture.\footnote{The Ramanujan conjecture is known for all weight $k\ge 3$. Readers unfamiliar with the definition of a CAP representation or the Ramanujan conjecture should refer to Sections \ref{s:autprelim} and \ref{s:arthur} .}
Then, for any $\epsilon > 0$, there exists a prime $p$ with $\gcd(p, N)=1$
and
$$
p ~\ll_{N,\epsilon}~ k^{5+\epsilon}
$$
such that $\eta(p) < 0$. The implicit constant depends
polynomially\footnote{Our method can be used to make this polynomial explicit by carefully going through
the proof of Lemma \ref{l:constants}.} on $N$.
\end{thm}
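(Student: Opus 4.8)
The plan is to argue by contradiction: assume $\eta(p)\ge 0$ for \emph{every} prime $p\le X$ with $\gcd(p,N)=1$, and deduce $X\ll_{N,\epsilon}k^{5+\epsilon}$ (which is equivalent to the theorem). Normalize by $\lambda_F(p):=\eta(p)\,p^{-(k-3/2)}$, the $p$-th Dirichlet coefficient of the analytically normalized degree $4$ spin $L$-function $L(s,F,\mathrm{spin})$; the Ramanujan conjecture (known for $k\ge 3$, assumed in the stated $k=2$ cases) gives $|\lambda_F(p)|\le 4$, and $\sgn(\eta(p))=\sgn(\lambda_F(p))$.

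First I record the analytic inputs, all with constants polynomial in $N$ --- this uniformity is the content of Lemma~\ref{l:constants}. Because $F\notin S_k^*(\G_0^{(2)}(N))$, the Arthur-packet description of Section~\ref{s:arthur} leaves only the general-type, Yoshida-type, and (when $k=2$) non-Saito--Kurokawa CAP cases; in each, by Andrianov's identity and the automorphy results of Weissauer, Pitale--Saha--Schmidt and Asgari--Shahidi, $L(s,F,\mathrm{spin})$ is entire and self-dual, of analytic conductor $\ll_N k^2$, with functional equation $s\leftrightarrow 1-s$. The degree $16$ Rankin--Selberg $L$-function $L(s,F\times F)$ has a pole at $s=1$ (simple, or double in the Yoshida case), analytic conductor $\ll_N k^{10}$, and nonnegative Dirichlet coefficients, the $p$-th being $\lambda_F(p)^2$; the nonnegativity excludes, by the Landau--Hoffstein--Lockhart lemma, an exceptional real zero near $s=1$, so the leading Laurent coefficient of $L(s,F\times F)$ at $s=1$ is $\gg_N(\log k)^{-O(1)}$ \emph{effectively}.

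The core is a two-sided estimate for the squarefree partial sum of Hecke eigenvalues. On one hand, under the hypothesis and using $|\lambda_F(p)|\le 4$,
$$
\sum_{\substack{p\le X\\\gcd(p,N)=1}}\lambda_F(p)\;=\;\sum_{\substack{p\le X\\\gcd(p,N)=1}}|\lambda_F(p)|\;\ge\;\tfrac14\sum_{\substack{p\le X\\\gcd(p,N)=1}}\lambda_F(p)^2 ,
$$
and a standard Rankin--Selberg argument --- shifting a contour for $-\tfrac{L'}{L}(s,F\times F)$ and invoking a zero-density estimate for $L(s,F\times F)$ to isolate the primes, together with the effective residue bound and the conductor bound $\ll_N k^{10}$ from the first step --- shows $\sum_{p\le X}\lambda_F(p)^2\gg_{N,\epsilon}X^{1-\epsilon}$ once $X\ge k^{5+\epsilon}$. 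Since the Hecke relations give $\eta(n)=\prod_{p\mid n}\eta(p)$ only for squarefree $n$, all terms with $\omega(n)\ge 2$ in $\sum_{n\le X\ \mathrm{sqfree},\ \gcd(n,N)=1}\prod_{p\mid n}\lambda_F(p)$ are nonnegative under the hypothesis, whence
$$
\sum_{\substack{n\le X,\ n\ \mathrm{squarefree}\\\gcd(n,N)=1}}\prod_{p\mid n}\lambda_F(p)\;\ge\;1+\sum_{\substack{p\le X\\\gcd(p,N)=1}}\lambda_F(p)\;\gg_{N,\epsilon}\;X^{1-\epsilon}.
$$
On the other hand this last sum is \emph{unconditionally} small: its generating Dirichlet series $\prod_p(1+\lambda_F(p)p^{-s})$ differs from $L(s,F,\mathrm{spin})$ by an Euler product satisfying $(1+\lambda_F(p)p^{-s})L_p(s,F,\mathrm{spin})^{-1}=1+O(p^{-2s})$, hence convergent and bounded for $\Re s>\tfrac12$, so the series is entire; Perron's formula with a contour shift to $\Re s=\tfrac12+\epsilon$ and the convexity bound for $L(s,F,\mathrm{spin})$ (conductor $\ll_N k^2$) give $\sum_{n\le X\ \mathrm{sqfree}}\prod_{p\mid n}\lambda_F(p)\ll_{N,\epsilon}X^{1-\delta}$ for a fixed $\delta>0$ and all $X\ge k^4$. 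Comparing the two displays at $X$ forces $X\ll_{N,\epsilon}1$ once $X\ge k^{5+\epsilon}$, a contradiction; hence some prime $p\le k^{5+\epsilon}$ with $\gcd(p,N)=1$ satisfies $\eta(p)<0$, with implied constant polynomial in $N$.

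The main obstacle is the lower bound $\sum_{p\le X}\lambda_F(p)^2\gg_{N,\epsilon}X^{1-\epsilon}$ valid already for $X\ge k^{5+\epsilon}$: it must be effective and uniform in $N$ and $k$, which requires establishing the analytic continuation, functional equations and conductor bounds for $L(s,F,\mathrm{spin})$ and $L(s,F\times F)$ with constants polynomial in $N$ (checked case by case along the general/Yoshida/CAP trichotomy), an effective lower bound for the residue of $L(s,F\times F)$ at $s=1$ extracted from nonnegativity of its coefficients rather than from an ineffective Siegel-type estimate, and a Linnik-style zero-density argument whose interplay with the conductor $\ll_N k^{10}$ pins the exponent at $5$. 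The remaining ingredients --- the elementary bound $|\lambda_F(p)|\ge\tfrac14\lambda_F(p)^2$, the passage to squarefree $n$ dictated by the Hecke relations, and the convexity-driven upper bound --- are routine.
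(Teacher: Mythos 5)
Your overall architecture (positivity hypothesis, reduction to squarefree integers, first moment small by convexity versus second moment large from the Rankin--Selberg pole) matches the paper's, but the step you yourself flag as ``the main obstacle'' is a genuine gap, and it is exactly the step the paper is designed to avoid. You ask for $\sum_{p\le X}\lambda_F(p)^2\gg X^{1-\epsilon}$ already at $X\asymp k^{5+\epsilon}$, i.e.\ a prime number theorem for the degree $16$ $L$-function $L(s,\pi\times\pi)$ at a scale polynomial in its analytic conductor $\ll_N k^{10}$. Isolating primes forces you to work with $-L'/L$, and a PNT at conductor-polynomial scale is a Linnik-type statement: the standard zero-free region only yields the asymptotic for $X$ super-polynomial in the conductor, and no unconditional zero-density estimate of Linnik strength is known for Rankin--Selberg $L$-functions of $\GL_4\times\GL_4$ in the weight aspect. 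The paper (Proposition \ref{non-CAP}) never touches primes on the lower-bound side: it forms the squarefree-supported series $L^{N,b}(s,\pi\times\pi)=\prod_{p\nmid N}\bigl(1+\lambda(p)^2p^{-s}\bigr)$, observes that its ratio to $L(s,\pi\times\pi)$ is holomorphic and bounded in $\Re(s)>1/2$, and applies a smoothed Perron formula with a contour shift to $\Re(s)=1/2+\epsilon$; the main term $\asymp X$ comes solely from the pole at $s=1$ (with residue $\gg_N k^{-\epsilon}$) and the error $O\bigl((Xk^5)^{1/2+\epsilon}\bigr)$ solely from convexity --- no information about zeros enters at all. Since you already pass to squarefree $n$ for the first moment, the repair is to do the same for the second moment; as written, your key input is unavailable.

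A second problem is the CAP case. The theorem does not exclude CAP forms: forms of Saito--Kurokawa type with $\chi_0\neq 1$ (which occur for every $k\ge2$ when $N$ is not squarefree), and forms of Soudry or Howe--Piatetski-Shapiro type, must be handled, and for these your framework breaks down --- the spin $L$-function is not entire (it contains shifted $\zeta$- or Dirichlet $L$-factors), the bound $|\lambda_F(p)|\le4$ fails since $\lambda_F(p)$ grows like $p^{1/2}$, and hence so does the inequality $|\lambda_F(p)|\ge\tfrac14\lambda_F(p)^2$. The paper disposes of these cases separately and elementarily (Proposition \ref{CAP}): the sign of $\lambda(p)$ is governed by $\chi_0(p)$, by the Hecke eigenvalue of a weight-one form, or by $\chi_1(p)+\chi_2(p)$, yielding the much stronger bound $p\ll_N1$ independent of $k$.
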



We also prove that for eigenforms not
of classical Saito-Kurokawa type, the associated Hecke eigenvalues $\eta(p)$ have infinitely many sign changes as $p$ traverses a typical arithmetic progression. To
 the best of our knowledge, this is the first result of this kind for Siegel cusp forms of degree 2.

\begin{thm}\label{split-inert}
Let $F$ be as in Theorem \ref{First-sign}.  Then, for any
positive integers $a, M$ with $\gcd(aN, M)=1$, the sequence
$( \eta(p) )$ with $p$ varying over primes congruent to $a \mod M$ changes sign
infinitely often.
\end{thm}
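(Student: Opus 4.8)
The plan is to reduce the sign-change statement to a statement about $L$-functions and then apply a standard analytic argument. The key observation is that, since $F$ is not of classical Saito--Kurokawa type, the spin $L$-function $L(s, F, \mathrm{spin})$ (suitably normalized so that the functional equation relates $s$ to $1-s$) is entire and its Dirichlet coefficients $\lambda_F(m)$ at $m$ coprime to $N$ are real and, under the Ramanujan hypothesis imposed in the $k=2$ case (and unconditionally for $k \ge 3$), bounded polynomially. Writing $\eta(p) = \lambda_F(p) \cdot p^{(2k-3)/2}$ up to the standard normalization, proving infinitely many sign changes of $\eta(p)$ along $p \equiv a \bmod M$ is the same as proving infinitely many sign changes of $\lambda_F(p)$ along that progression.

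First I would set up the relevant twisted $L$-functions: for each Dirichlet character $\chi$ modulo $M$, consider $L(s, F \otimes \chi, \mathrm{spin})$, which is again entire (the twist of a non-CAP, non-Saito--Kurokawa cuspidal automorphic representation of $\mathrm{GSp}_4$ by a Dirichlet character remains cuspidal and non-CAP, so its degree-$4$ $L$-function has no pole) and satisfies a functional equation. By orthogonality of characters, the partial sums $\sum_{p \le x,\ p \equiv a (M)} \lambda_F(p) \log p$ can be written as $\frac{1}{\varphi(M)}\sum_{\chi \bmod M} \bar\chi(a) \sum_{p \le x} \lambda_F(p)\chi(p)\log p$, and each inner sum is controlled by the analytic properties (in particular the holomorphy on $\Re(s) \ge 1$, or at least a zero-free region near $\Re(s)=1$) of $L(s, F\otimes\chi, \mathrm{spin})$. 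The standard consequence is that $\sum_{p \le x,\ p\equiv a(M)} \lambda_F(p) \log p = o(x)$, i.e. the signed prime sum has no main term.

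Next I would run the classical contradiction argument: suppose $\lambda_F(p) \ge 0$ for all sufficiently large $p$ in the progression (the case of eventually nonpositive sign is symmetric, applying the argument to $-F$ or equivalently noting the argument is sign-agnostic). Then one compares the signed sum $\sum \lambda_F(p)\log p$ over the progression against the unsigned sum $\sum |\lambda_F(p)| \log p$ over the progression; the latter is $\gg x$ because $\sum_{p \le x,\ p \equiv a(M)} \lambda_F(p)^2 \log p \sim c\, x$ with $c > 0$ (this in turn follows from the fact that the Rankin--Selberg $L$-function $L(s, F \times F)$, restricted to the progression via characters, has a simple pole at $s=1$ coming from the trivial character, since $F$ is cuspidal) together with the polynomial Ramanujan-type bound $|\lambda_F(p)| \ll p^\epsilon$ to pass between $\lambda_F(p)^2$ and $|\lambda_F(p)|$. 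Under the eventual-sign assumption the signed and unsigned sums over the progression agree up to $O(1)$, forcing $o(x) = \gg x$, a contradiction.

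The main obstacle is ensuring the required analytic inputs hold in the full generality of arbitrary level $N$ and with the various twists: namely (i) that $L(s, F \otimes \chi, \mathrm{spin})$ is holomorphic and nonvanishing on the line $\Re(s) = 1$ for every $\chi$ (equivalently, that $F \otimes \chi$ is never Saito--Kurokawa, so no pole is introduced by the twist, and a standard non-vanishing result applies), and (ii) that the Rankin--Selberg $L$-function $L(s, (F\otimes\chi) \times F)$ has its only pole (a simple one at $s=1$) exactly when $\chi$ is trivial — this is where cuspidality and the hypothesis $F \notin S_k^*(\G_0^{(2)}(N))$ enter decisively, since for Saito--Kurokawa forms extra poles would appear. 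For $k=2$ non-CAP forms the Ramanujan bound is assumed in the hypothesis, so point (i)'s nonvanishing and the passage between $\lambda_F(p)^2$ and $|\lambda_F(p)|$ go through; the bookkeeping with the conductor of the twists contributes only to implied constants and does not affect the qualitative conclusion. Finally, I would record that exactly the same mechanism yields infinitely many sign changes, not merely one, by iterating the argument on intervals $[x, 2x]$.
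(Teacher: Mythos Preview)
Your approach is essentially the same strategy as the paper's for the \emph{non-CAP} case (Proposition~\ref{G-Y}): both compare a signed sum of $\lambda(p)$ over the progression against the Rankin--Selberg-type sum of $\lambda(p)^2$, using character twists to isolate the progression and the Ramanujan bound to link $|\lambda(p)|$ and $\lambda(p)^2$. The paper uses the weights $1/p$ (so the comparison is between $O(1)$ and $c\log\log X$), whereas you use $\log p$ (so the comparison is between $o(x)$ and $\asymp x$); these are equivalent formulations of Selberg orthogonality and neither buys anything over the other here. One point you should sharpen is the claim that $L(s,(F\otimes\chi)\times F)$ has a pole only for trivial $\chi$: this requires $\Pi\not\simeq\Pi\otimes\chi$ for the $\GL_4$ transfer $\Pi$, which the paper establishes via a conductor argument using $\gcd(N,M)=1$ (Corollary~\ref{cor:selberg}); you should invoke this explicitly rather than leave it as a hypothesis to be checked.

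There is, however, a genuine gap. The hypothesis of Theorem~\ref{split-inert} is that $F$ does not lie in $S_k^*(\Gamma_0^{(2)}(N))$, i.e.\ $F$ is not of \emph{classical} Saito--Kurokawa type. This still allows $F$ to be attached to a CAP representation: $F$ could be of non-classical Saito--Kurokawa type (with $\chi_0\neq 1$), of Soudry type, or of Howe--Piatetski-Shapiro type. For all of these, the normalized eigenvalues $\lambda(p)$ are \emph{not} bounded --- they grow like $p^{1/2}$ (see the explicit formulas in Proposition~\ref{p:lfunctionsfacts}) --- so your passage from $\sum\lambda(p)^2$ to $\sum|\lambda(p)|$ via the Ramanujan bound fails, and your claimed asymptotics for both the signed and squared sums are wrong in these cases. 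Your proposal implicitly assumes $F$ is non-CAP throughout (e.g.\ ``the twist of a non-CAP, non-Saito--Kurokawa cuspidal automorphic representation\ldots''), but the theorem covers more. The paper handles the CAP-but-not-classical-SK cases separately in Proposition~\ref{SK-HP}, exploiting the explicit shape of $\lambda(p)$ in each Arthur type to reduce to sign changes of Dirichlet characters or $\GL_2$ eigenvalues along the progression; you need an analogous case analysis.
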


Let us discuss briefly  previous results related  to the
above theorems. The investigation of first negative Hecke
eigenvalue of Siegel cusp forms  was initiated by Kohnen and Sengupta \cite{KS07}
(for the case $N=1$).
Exploring relations among Hecke eigenvalues at
prime power indices they proved the existence of $n \ll k^2\log^{20} k$
such that $\eta(n) < 0$. Note that in their result $n$ can be any integer, in contrast to our
Theorem \ref{First-sign} where $n$ is a prime. The study of signs of Hecke eigenvalues at all integers
has a different flavour (and is usually easier) from studying
sign-changes over the primes. We remark here that the Hecke algebra
is generated not just by $T(p)$ but by $T(p)$ and
$T(p^2)$, so going down from all integers to primes is non-trivial. Assuming the Generalized Lindel\"of hypothesis
one can improve the bound obtained by Kohnen and Sengupta to $n \ll_\epsilon k^\epsilon$ for any
$\epsilon > 0$. The result of Kohnen and Sengupta was generalized
by Brown \cite{JB10} to the case $N>1$. Other results on sign changes of Hecke eigenvalues include
\cite{WK07, KS07, PS08, RSW16, DK18, GKP21}. The first work that
showed that the sequence $\eta(p)$ for
$p$ \emph{prime }has infinitely many sign changes is due
to Das \cite{SD13} (however, he only
considered the case $N=1$ and did not consider primes in
arithmetic progressions or bounds for the first sign-change).

For more general automorphic $L$-functions,
Cho and Kim \cite[Theorem 1.1]{CK21}  (building upon work
of Jin \cite{SJ19}) proved a bound for the
first negative coefficients at primes of $L$-function attached to
a self-dual automorphic representation under the assumption
of Ramanujan conjecture. As a direct consequence of \cite[Theorem 1.1]{CK21},
one can find a prime $p\ll k^{7+\epsilon}$ such that $\eta(p) < 0$,
where $\eta(p)$ is as in Theorem \ref{First-sign}.
However, our result in Theorem \ref{First-sign} gives a stronger
bound as we have more precise information about the archimedean
$L$-factor in our case.

\subsection{Main Results on Fourier coefficients}
For
Siegel cusp forms of degree $2$, the question of sign changes for the full sequence of Fourier coefficients
$a(T) ~ (T \in \n_2)$ has been investigated in \cite{SJ08, CGK15, GS17}.
However there are not many previous results for the more subtle question
of sign changes when $T$ is restricted to some
sparse subsequence of $\n_2$. A matrix $T \in \n_2$ is called {\em fundamental} if $ - \text{det }(2T)$
is a fundamental discriminant, and in this case, the Fourier coefficient
$a(T)$ will be called a {\em fundamental Fourier coefficient}. For a fixed
 $T \in \n_2$ and varying $m \in \N$, we call the sequence $a(mT)$ the
{\em radial} Fourier coefficients associated to $T$.  Very recently,
J\"a\"asaari, Lester and Saha \cite[Theorem A]{JLS21+} proved the infinitude of sign changes for
the subsequence of fundamental Fourier
coefficients $a(T)$. Shankhadhar and
Tiwari \cite{ST21+} restricted themselves to certain other types of
Fourier coefficients $a(T)$. In this paper, we investigate signs of
the radial Fourier coefficients $a(pT)$ for fixed fundamental $T$ and $p$
varying over primes. The properties of these coefficients $a(pT)$ are closely connected to those of the Hecke eigenvalues $\eta(p)$.

Our result below (Theorem \ref{t:fcsignchange}) gives a new
characterisation of the forms of classical
Saito--Kurokawa type using sign changes of Fourier coefficients. We note that our result does not require $F$ to be an eigenform. We refer the reader to \cite[Section 8]{pssmb} for some alternative characterisations of Saito-Kurokawa lifts.

\begin{thm}[see Propositions \ref{F-o-Maass}, \ref{Maass}]\label{t:fcsignchange}
Let $k$ and $N$ be positive integers. If $k=2$, assume that the Ramanujan conjecture holds for each non-zero element of $S_2(\Gamma^{(2)}_0(N))$ that is a Hecke eigenform at all good primes and not attached to a CAP representation.

Let $F \in S_k(\Gamma^{(2)}_0(N))$ be a non-zero cusp form having real
Fourier coefficients $a(T)$ and let $T_0$ be a fundamental matrix such that
$a(T_0) \ne 0$ and $\gcd(4\det(T_0), N)=1$.
If $F$ lies in $S_k^*(\G_0^{(2)}(N))$, then for all sufficiently
large primes $p$ the Fourier coefficients $a(pT_0)$ have the same sign. On the other hand, if $F$ is orthogonal to $S_k^*(\G_0^{(2)}(N))$,
then there exist infinitely many primes $p$ such that $a(pT_0) > 0$
and infinitely many primes $q$ such that $a(qT_0) < 0$.
\end{thm}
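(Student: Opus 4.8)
The plan is to relate the radial Fourier coefficients $a(pT_0)$ to a linear combination of Hecke eigenvalues, and then invoke the sign-change results for Hecke eigenvalues. The key classical tool is the action of the Hecke operator $T(p)$ on Fourier coefficients: for $\gcd(p,N)=1$ there is an explicit formula expressing $a_{T(p)F}(T_0)$ as a sum over $a(pT_0)$, $a(T_0)$ (when $p^2 \mid \ldots$, which is excluded by fundamentality and $\gcd(4\det T_0,N)=1$), and $a(T_0/p)$-type terms (which vanish since $T_0/p \notin \n_2$ when $T_0$ is fundamental and $p \nmid 4\det T_0$). Thus, after reducing to a single Hecke eigenform $F$ (by projecting onto the finitely many eigenspaces — here we use that $F$ need not be an eigenform, but the generalized Maass subspace and its complement are both Hecke-stable and spanned by eigenforms), one gets a relation of the shape
\begin{equation*}
a(pT_0) ~=~ \bigl(\text{something involving }\eta(p)\bigr)\cdot a(T_0) ~+~ p^{k-?}\,a(T_0),
\end{equation*}
more precisely $a(pT_0) = \eta(p)\,a(T_0) - p^{k-2}\,c\,a(T_0)$ for a known constant (this is the degree-2 analogue of the formula $a(np) = \eta(p)a(n) - p^{k-1}a(n/p)$ for elliptic forms, with the archimedean normalization adjusted). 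The first step, then, is to write down this Hecke relation carefully in the level-$N$ setting and check that all the ``lower'' terms genuinely vanish under the hypotheses $a(T_0)\ne 0$, $T_0$ fundamental, $\gcd(4\det T_0,N)=1$.

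Next I would handle the two cases separately, as the theorem is stated. If $F\in S_k^*(\G_0^{(2)}(N))$: here each eigenform component has $\eta(p) = \lambda(p)\,p^{(k-3/2)} + \ldots$ governed by the Saito--Kurokawa relation \eqref{SK} (in the normalization of the excerpt, $\lambda_F(p) = \lambda_f(p) + p^{1/2}+p^{-1/2}$, which is $\ge p^{1/2} - 2 + p^{-1/2} > 0$ for $p \ge 5$). Plugging into the Hecke relation, the positive ``$p^{1/2}$'' term in $\lambda_F(p)$ dominates, so up to the overall sign of $a(T_0)$ the coefficient $a(pT_0)$ has a fixed sign for all large $p$. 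One must be slightly careful because $F$ is a sum of several SK eigenforms; but since all of them have the same asymptotic positivity (the dominant term $p^{(k-1)/2}\sqrt{p}/\sqrt{p}$-type contribution has a fixed power of $p$ and positive leading behaviour), the linear combination $a(pT_0) = \sum_j c_j\,\eta_j(p)$ still has the dominant term of fixed sign; I would make this precise by isolating the leading power of $p$ and noting the coefficient is $a(T_0)\ne 0$. If $F$ is orthogonal to $S_k^*$: decompose $F = \sum_j F_j$ into Hecke eigenforms, none of classical Saito--Kurokawa type. Now apply Theorem \ref{split-inert} — better, I would re-run its proof mechanism (a Rankin--Selberg / symmetric-power $L$-function argument, or a Landau-type argument using the known analytic properties of the spin or standard $L$-function) directly to the quantity $\sum_j c_j\,\eta_j(p)$, establishing that this sequence is not eventually of one sign. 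The cleanest route: show the associated Dirichlet series $\sum_p (a(pT_0)/a(T_0))\,p^{-s}$ cannot be eventually sign-definite because its analytic continuation / the positivity of some auxiliary $L$-function would be violated, exactly as in the proof of Theorem \ref{split-inert}.

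The main obstacle is the case where $F$ is a sum of several eigenforms: the sign-change results (Theorems \ref{First-sign}, \ref{split-inert}) are proved for a single eigenform, and it is not automatic that a linear combination $\sum_j c_j\eta_j(p)$ changes sign, since cancellation between the $F_j$ could conceivably conspire. The way around this is to observe that the various $\eta_j$ belong to distinct automorphic representations, so by linear independence of characters / Rankin--Selberg orthogonality the ``average size'' $\sum_p |\sum_j c_j\eta_j(p)|^2 p^{-s}$ has a pole governed by $\sum_j |c_j|^2$, forcing the sequence to be genuinely oscillating and large infinitely often; combined with the fact (from the same $L$-function input used for a single form) that it cannot be eventually positive or eventually negative, this yields infinitely many sign changes. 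Carrying this multiplicity bookkeeping through — and in the SK case, verifying that no cancellation kills the leading term — is where the real work lies; everything else is the bookkeeping of the explicit Hecke action on Fourier coefficients and is routine. I would organize the writeup as the two Propositions \ref{F-o-Maass} and \ref{Maass} referenced in the statement, one for each case.
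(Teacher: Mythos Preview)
Your overall architecture is right and matches the paper: reduce $a(pT_0)$ to a linear combination of Hecke eigenvalues, then for the Maass case use the dominance of the $p^{k-1}$ term coming from the Saito--Kurokawa relation (this is exactly Proposition~\ref{Maass}), and for the orthogonal-complement case use a Rankin--Selberg/Selberg-orthogonality argument to rule out eventual sign-definiteness of a linear combination (this is exactly Proposition~\ref{prop:selberggen}, which is the ``real work'' you correctly anticipated). Your identification of the main obstacle --- cancellation among several eigenform components --- and of its cure --- orthogonality forcing the second moment to diverge --- is on the nose.

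There is, however, a genuine gap in your Hecke--Fourier step. The relation you write, $a(pT_0)=\eta(p)a(T_0)-p^{k-2}c\,a(T_0)$ with the ``lower terms'' vanishing by fundamentality, is not what the degree-$2$ Hecke relation gives. For fundamental $T_0$ with discriminant $d$, the extra terms are not simply multiples of $a(T_0)$: they involve the other Fourier coefficients $a(N_i)$, where $\{N_i\}$ runs over a full set of $\SL_2(\Z)$-representatives of the ideal class group of $\Q(\sqrt{d})$. In the Bessel-model language of the paper (Proposition~\ref{F-H relation} and equation~\eqref{e:fourierrelation}), what one actually has is
\[
\sum_i \Lambda(N_i)\,a(pN_i)
~=~
p^{k-3/2}\Bigl(\lambda(p)-p^{-1/2}\gamma(\Lambda)\Bigr)\sum_i \Lambda(N_i)\,a(N_i)
\]
for each class-group character $\Lambda$, with $\gamma(\Lambda)$ vanishing precisely when $p$ is \emph{inert} in $\Q(\sqrt{d})$. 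Only in that inert case does character orthogonality yield the clean identity $a(pT_0)=\eta(p)a(T_0)$. For split $p$ the $\gamma$-term depends on $\Lambda$, so inverting mixes all the $a(N_i)$'s and you do not get a relation involving $a(T_0)$ alone. (In the Maass case the difficulty evaporates because the Maass relations force $a(N_i)=a(T_0)$ for all $i$; this is how the paper gets Lemma~\ref{Kohnen} for all $p$.)

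The paper's fix --- which you are missing --- is to restrict from the outset to primes $p$ inert in $\Q(\sqrt{-4\det T_0})$, i.e., to a fixed arithmetic progression modulo $M=4\det T_0$ with $\gcd(M,N)=1$. This is why Theorem~\ref{split-inert} is stated for primes in progressions, and why Proposition~\ref{prop:selberggen} is proved for sums over $p\equiv a\pmod M$. Once you make that restriction, your Selberg-orthogonality plan goes through essentially as you describe; the one refinement you would still need to discover is the dichotomy in Proposition~\ref{prop:selberggen}: the orthogonal complement of $S_k^*$ contains both tempered forms and non-classical CAP forms (non-trivial-$\chi_0$ Saito--Kurokawa, Soudry, Howe--Piatetski-Shapiro), and for the latter $\lambda_i(p)\asymp p^{1/2}$ rather than $O(1)$, so the normalization in the first/second-moment comparison must be adjusted by a factor $p^{1/2}$ (Case~2 versus Case~1).
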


We remark here that given $F \in S_k(\Gamma^{(2)}_0(N))$, the existence of (infinitely many) fundamental $T_0$ with
$a(T_0) \ne 0$ and $\gcd(4\det(T_0), N)=1$ is known for $N$
squarefree under mild conditions on $F$ at primes dividing $N$;
see Remark \ref{remark:nonvanishing} below.

\medskip

Our final theorem on Fourier coefficients is quite different in flavour from the results described so far and concerns upper bounds on the sizes of the Fourier coefficients.
Given $F \in S_k(\Sp_4(\Z)$, the best unconditional currently known upper bound for $a(T)$  is due to Kohnen \cite{WK92} and states that
$| a(T) | \ll_{F, \epsilon} ({\rm det}~T)^{\frac{k}{2} - \frac{13}{36} +\epsilon}$. However, this result is rather far from the expected truth.
The famous Resnikoff--Salda\~na conjecture \cite[Conjecture IV]{RS74} predicts that for $F \in S_k(\Sp_4(\Z))$
orthogonal to forms of classical Saito-Kurokawa type, the Fourier coefficients satisfy the bound
$| a(T) | \ll_{F, \epsilon} ({\rm det}~T)^{\frac{k}{2} - \frac{3}{4} +\epsilon}$
for any $\epsilon > 0$. This conjecture is extremely deep and difficult, and appears to be out of reach at present.
The article \cite{DK13} proved bounds towards this for radial Fourier
coefficients while in \cite{JLS21+}, the authors considered $F \in S_k(\Gamma^{(2)}_0(N))$ with $N$ squarefree,
and assuming the Generalized Riemann Hypothesis proved the bound  $|a(T)| \ll_{F, \eps} \frac{\det(T)^{\frac{k}2 - \frac{1}{2}}}{ (\log |\det(T)|)^{\frac18 - \eps}}$ for fundamental matrices $T$.

For $F$ lying in the generalized Maass subspace $S_k^*(\G_0^{(2)}(N))$, the situation is slightly different. In this case  one expects that the Resnikoff--Salda\~na bound
$| a(T) | \ll_{F, \epsilon} ({\rm det}~T)^{\frac{k}{2} - \frac{3}{4} +\epsilon}$ still holds for the fundamental Fourier
coefficients $a(T)$ of $F$ (this can be shown to follow from the Generalized Lindel\"of hypothesis; see Proposition \ref{p:fundSK}); however, it is known that this bound \emph{fails} to hold for general (non-fundamental) coefficients. Indeed, Kohnen \cite{Kohnen04} (see also B\"ocherer--Raghavan \cite{BR88}) observed that for a Saito-Kurokawa lift of full level, infinitely many of the Fourier coefficients $a(LT_0)$ $(L \ge 1)$ do \emph{not} satisfy the Resnikoff--Salda\~na bound. This was refined by Das and Kohnen \cite{DK13} who showed that given a Saito-Kurokawa lift of full level, there exists a fundamental $T \in \n_2$
such that $|a(pT)| \gg_{F,T} p^{k-1}$ for $p$ lying in a set of primes  of positive density. Therefore, the Resnikoff--Salda\~na conjecture needs to be modified for forms in the Maass subspace. For the case $N=1$, Das and Kohnen \cite[section 4]{DK13} conjectured  that the bound
\begin{equation}\label{DK}| a(T) |
~\ll_{F, \epsilon}~
\big( \det T \big)^{\frac{k-1}{2}+\epsilon}\end{equation}
 holds for all Fourier coefficients $a(T)$ of a form lying in the Maass subspace.  From the examples above, it is clear that the exponent $\frac{k-1}{2}$ in \eqref{DK} is best possible.  Our next result shows that \eqref{DK} holds for a form lying in the generalized Maass subspace of level $N$ under a mild assumption on $\det(T)$ which is always satisfied when $N$ is squarefree.

\begin{thm}\label{DK-SK}
Let $k$ and $N$ be positive integers. Let $F \in S_k^*(\Gamma^{(2)}_0(N))$ be a non-zero cusp form having
Fourier coefficients $a(T)$. Then, for any $\epsilon > 0 $, the bound
$$
| a(T) |
~\ll_{F, \epsilon}~
\big( \det T \big)^{\frac{k-1}{2}+\epsilon}
$$ holds for all $T \in \n_2$ which have the property that $\gcd(4\det(T), N)$ is squarefree.
\end{thm}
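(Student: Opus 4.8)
The plan is to reduce the bound to an estimate for the Fourier coefficients of an associated half-integral weight cusp form, and ultimately to the convexity bound for quadratic twists of the $L$-function of a classical newform.

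First, since $S_k^*(\G_0^{(2)}(N))$ is by definition spanned by eigenforms of classical Saito--Kurokawa type, write $F=\sum_j c_jF_j$ with each $F_j$ such an eigenform. The hypothesis that $\gcd(4\det T,N)$ be squarefree does not involve $j$, so by the triangle inequality it suffices to prove the bound for each $F_j$, and hence we may assume that $F$ is itself an eigenform of classical Saito--Kurokawa type. Let $f$ be the elliptic newform of weight $2k-2$ attached to $F$ by the Arthur-packet description of Section~\ref{s:arthur}, and let $a_f(r)$ denote its Fourier coefficients (so that Deligne's bound reads $|a_f(r)|\ll_\epsilon r^{\,k-\frac32+\epsilon}$). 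That description, together with the theory of Fourier--Jacobi expansions (the Maass relations of Eichler--Zagier when $N=1$) and the local Bessel recursions of Sugano, yields, for $T=\mat{n}{r/2}{r/2}{m}$ with $\gcd(4\det T,N)$ squarefree, an identity of the shape
\[
a(T)\;=\;\beta_N(T)\sum_{d\mid\ell}d^{\,k-1}\,c\!\left(\frac{4\det T}{d^2}\right),\qquad \ell:=\gcd(n,r,m),
\]
where $c(m)$ is the $m$-th Fourier coefficient of the weight $k-\tfrac12$ half-integral weight cusp form attached to $f$ by the Shimura/Waldspurger correspondence, and $\beta_N(T)$ is a local correction factor supported at the primes dividing $N$ with $|\beta_N(T)|\ll_F1$. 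For $N=1$ this is the classical formula of Eichler--Zagier with $\beta_1(T)=1$; the role of the squarefree hypothesis is precisely that it keeps the $N$-part of $a(T)$ in this simple, bounded shape.

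The crux of the matter is then the bound
\[
|c(m)|\;\ll_{F,\epsilon}\;m^{\frac{k-1}{2}+\epsilon}\qquad\text{for every }m\ge1 .
\]
Write $m=|D_0|\,\mathfrak f^2$, where $D_0$ is the fundamental discriminant attached to $m$ and $\mathfrak f\ge1$. For the fundamental part we invoke Waldspurger's formula, in the Kohnen--Zagier form (and its extensions to level $4N$ due to Kohnen, Ueda and Baruch--Mao): one has
\[
|c(|D_0|)|^2\;\asymp_F\;|D_0|^{\,k-\frac32}\,L\!\left(\tfrac12,\,f\otimes\chi_{D_0}\right),
\]
and the convexity bound $L(\tfrac12,f\otimes\chi_{D_0})\ll_{F,\epsilon}|D_0|^{\frac12+\epsilon}$ then gives $|c(|D_0|)|\ll_{F,\epsilon}|D_0|^{\frac{k-1}{2}+\epsilon}$. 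For the square part we use Kohnen's relation, which expresses $c(|D_0|\mathfrak f^2)$ as a finite sum over $e\mid\mathfrak f$ of the terms $\mu(e)\,\chi_{D_0}(e)\,e^{k-2}\,c(|D_0|)\,a_f(\mathfrak f/e)$; combining this with Deligne's bound gives
\[
|c(|D_0|\mathfrak f^2)|\;\ll_{F,\epsilon}\;|D_0|^{\frac{k-1}{2}+\epsilon}\,\mathfrak f^{\,k-\frac32+\epsilon}\;\le\;\bigl(|D_0|\mathfrak f^2\bigr)^{\frac{k-1}{2}+\epsilon}\;=\;m^{\frac{k-1}{2}+\epsilon},
\]
as claimed. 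Note that only the convexity bound, not subconvexity, is needed here, since we are working in the $\det T$-aspect with $k$ fixed.

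Finally we assemble the two inputs. Since $\ell^2\mid4\det T$ we have $\ell\le2(\det T)^{1/2}$, hence the divisor bound gives $d(\ell)\ll_\epsilon(\det T)^\epsilon$; also $|\beta_N(T)|\ll_F1$. Therefore
\[
|a(T)|\;\le\;|\beta_N(T)|\sum_{d\mid\ell}d^{\,k-1}\Bigl|c\!\left(\tfrac{4\det T}{d^2}\right)\Bigr|\;\ll_{F,\epsilon}\;(\det T)^{\frac{k-1}{2}+\epsilon}\sum_{d\mid\ell}d^{-2\epsilon}\;\ll_{F,\epsilon}\;(\det T)^{\frac{k-1}{2}+\epsilon}\,d(\ell)\;\ll_{F,\epsilon}\;(\det T)^{\frac{k-1}{2}+\epsilon}
\]
after renaming $\epsilon$. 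This proves the theorem for an eigenform, and the general case follows by linearity as explained above.

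The main obstacle is the content of the first paragraph: establishing, for a form lying in the generalized Maass space of possibly non-squarefree level, the Maass-type formula for $a(T)$ together with the bound $|\beta_N(T)|\ll_F1$. This requires unwinding the Arthur-packet definition of classical Saito--Kurokawa type, identifying the local components $\pi_p$ at the primes $p\mid N$, and verifying that the hypothesis that $\gcd(4\det T,N)$ be squarefree forces the $N$-adic Bessel contribution to $a(T)$ into the simple bounded shape above. A secondary but essentially standard point is to have the Waldspurger and Kohnen identities available in the needed generality at level $4N$. Once these structural facts are in hand, the estimates of the second and third paragraphs are the level-$N$ analogue of the argument of Das and Kohnen and present no further difficulty.
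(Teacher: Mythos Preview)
Your overall strategy is sound and parallels the paper's first proof: reduce to an eigenform, relate general Fourier coefficients to fundamental ones, and bound the fundamental coefficients via Waldspurger and the convexity bound for $L(\tfrac12,\pi_0\otimes\chi_d)$. The analytic estimates in your second and third paragraphs are correct, and the paper uses essentially the same endgame (Proposition~\ref{p:fundSK} is exactly your Waldspurger-plus-convexity step, and your divisor-sum manipulation matches the deduction of Theorem~\ref{DK-SK} from Theorem~\ref{t:SKRS}).

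The genuine gap is precisely the one you flag yourself: the formula
\[
a(T)=\beta_N(T)\sum_{d\mid\ell}d^{k-1}c\!\left(\frac{4\det T}{d^2}\right),\qquad |\beta_N(T)|\ll_F 1,
\]
is not available in the literature for the generalized Maass space at arbitrary level $N$, and you have not supplied a proof. The paper does \emph{not} establish a classical Maass relation of this shape. Instead it works adelically: it shows (Lemma~\ref{l:SKkey}) that $a(T)$ depends only on $d_T,L_T,M_T$, identifies the local components $\pi_p$ of $\pi_F$ (Lemma~\ref{l:SK properties}), and then proves explicit growth bounds for the local Bessel functions $B_{\phi_p}(h_p(\ell,m))$ in representations of types IIb, Vb, VIb (Proposition~\ref{p:SKlocal}). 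The condition that $\gcd(4\det T,N)$ be squarefree is used to ensure $\gcd(L_TM_T,N_2)=1$ (with $N_2$ the non-squarefree part of $N$), so that at every prime dividing $L_TM_T$ the local component is one of these three types and the Bessel value at the identity is non-zero. This is the substantive replacement for your unproved claim about $\beta_N(T)$; without it your argument is a plausible outline rather than a proof. Your suggestion that the Kohnen--Zagier/Waldspurger identities at level $4N$ are ``essentially standard'' is also optimistic for non-squarefree $N$; the paper sidesteps this by invoking the adelic period formula (via \cite{JLS21+}) rather than a classical half-integral-weight identity.
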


In fact, our result is sharper than stated in Theorem \ref{DK-SK}, and gives a more refined bound on the size of the Fourier coefficient; see Theorem \ref{t:SKRS}. In particular, Theorem \ref{t:SKRS} implies that for all primitive $T$ (i.e., there being no common factor dividing the entries of $T$) we have  $| a(T) | \ll_{F, \epsilon} ({\rm det}~T)^{\frac{k}{2} - \frac{3}{4} +\epsilon}$ under the Generalized Lindel\"of hypothesis. This shows that the counterexamples to the Resnikoff--Salda\~na bound essentially arise from the matrices of the form $LT_0$ as in the examples of Kohnen and Das--Kohnen noted above.

After a previous version of this manuscript was made available, we became aware (thanks to Patrick Dynes) of a recent preprint by Ikeda and Katsurada \cite{Ikkat22} where they prove a similar bound for the Fourier coefficients in the overlapping context of an Ikeda lift of full level. Their methods are quite different from ours and use the Gross--Keating invariant.

\subsection{Methods}
Theorems \ref{First-sign} and \ref{split-inert} rely on properties of the underlying $L$-functions. More precisely, for Theorem \ref{First-sign}, we build upon a method
of Jin \cite[Theorem 3.1]{SJ19}
(also see Cho and Kim \cite{CK21})  and for Theorem \ref{split-inert} we extend
techniques of Kohnen--Lau--Wu \cite{KLW13} by adding a character twist. For both these results, it is useful to separate into cases according to the various
global Arthur packets that $F$ can correspond to
(see Proposition \ref{p:lfunctionsfacts}). If $F$ is attached to a CAP
representation, then we are essentially reduced to properties of Dirichlet characters, which gives us stronger
versions of the desired results. On the other hand, if $F$ is not attached to a CAP representation,
then we use the transfer to $\GL_4$ and properties of the (twists of the) $\GL_4$ $L$-functions to obtain these theorems.

The proof of Theorem \ref{t:fcsignchange} relies on two main
ingredients. The first is an identity relating Fourier coefficients
and Hecke eigenvalues (see \propref{F-H relation}). The second ingredient
is a Selberg orthogonality style relation for \emph{linear combinations}
of Hecke eigenvalues (Proposition \ref{prop:selberggen}) which exhibits a certain dichotomy due to the nature of Arthur packets on $\GSp_4$ and may be of independent interest.

For \thmref{DK-SK} we use quite different methods from the above. In fact, we give two different proofs of this theorem. The first  proof uses representation-theoretic
methods. Precisely, we use the spherical $p$-adic Bessel functions to relate the size of $a(T)$ to that of $a(T_0)$ where $T_0$ is a fundamental matrix. This reduction requires delicate estimates for the sizes of relevant values of these $p$-adic Bessel functions. The desired bound for $a(T_0)$  is a consequence of  the convexity bound for certain $L$-functions. As noted earlier, this method gives us a sharper result than we have stated above; see Theorem \ref{t:SKRS} and Proposition \ref{t:SKRSnew} for the more refined bounds.

The second proof is only valid for
squarefree $N$ and is easy to derive from recent developments in the theory
of Fourier--Jacobi coefficients of a Siegel cusp form. This proof exploits bounds relating  the Petersson norm of a Jacobi form, the sizes of its Fourier coefficients, and the index of the Jacobi form. We decided to
include both proofs here as they use very different ideas and the second proof highlights
the importance (as well as the limitations) of the theory of Jacobi forms
in studying Siegel modular forms. See Section~5 for further details.

Some key preliminary facts will be listed in Section \ref{s:prelim}.
In sections 3 -- 6, we give proofs of
Theorems~\ref{First-sign} -- \ref{DK-SK}.

\subsection{Notations}
Throughout the paper, $\epsilon > 0$
will denote a sufficiently small real number whose precise value may change
in each appearance. The notation
$A \ll_{x,y,z} B$ or $A = O_{x,y,z}(B)$ will mean there exists a positive
constant $C$ depending at most on $x, y, z$ such that $|A| \le C |B|$.

We say that an integer $d$ is a fundamental discriminant if $\Q(\sqrt{d})$
is a field of discriminant $d$. For a fundamental discriminant $d$, we let
$\chi_d$ be the associated quadratic Dirichlet character. For positive integers $a$, $b$, we use the notation $\gcd(a, b^\infty)$ to denote the limit $\lim_{N \rightarrow \infty} \gcd(a, b^N)$, which is clearly equal to $\gcd(a, b^N)$ for all sufficiently large $N$. For a prime $p$, we let $v_p(a)$ denote the largest non-negative integer $v$ such that $p^v$ divides $a$.

We use $\A$ to denote the ring of adeles over $\Q$. The notation
$L(s, \pi )$  for the $L$-function of an automorphic representation $\pi$  will mean the finite part of the $L$-function (i.e., without
the archimedean factors) and the notation $\Lambda(s, \pi)$ will
denote the completed $L$-function including the archimedean
factors. All $L$-functions will be normalized so that the functional
equation takes $s \mapsto 1-s$. Given an $L$-function with an
Euler factor decomposition $L(s, \pi) = \prod_p L(s, \pi_p)$, we
use the notation $L^N(s, \pi):=\prod_{p \nmid N}L(s, \pi_p)$ for
each integer $N$ to denote the partial $L$-function away from
primes dividing $N$.

By $1_2$ we mean the identity matrix of size $2$.
We denote by $J$ the $4$ by $4$ matrix given by
$
J =
\left(\begin{smallmatrix}
0 & 1_2\\
-1_2 & 0\\
\end{smallmatrix}\right),$ and we define the algebraic groups
$\GSp_4$ and $\Sp_4$ by
$$
\GSp_4(R) = \{g \in \GL_4(R) \; | \; g^tJg =
  \mu_2(g)J,\:\mu_2(g)\in R^{\times}\},$$
$$
\Sp_4(R) = \{g \in \GSp_4(R) \; | \; \mu_2(g)=1\},
$$
for any commutative ring $R$.  The Siegel upper-half space $\H_2$ of degree 2 is defined by
$$
\H_2 = \{ Z \in \mathrm{Mat}_{2\times 2}(\C)\;|\;Z =Z^t,\ \Im(Z)
  \text{ is positive definite}\}.
$$

The subgroup $\GSp_4(\R)^+$ of $\GSp_4(\R)$ consists of the matrices $g$ such that $\mu_2(g)>0$. For $g =\left(\begin{smallmatrix} A&B\\ C&D \end{smallmatrix}\right) \in \GSp_4(\R)^+$ and $Z \in \H_2$, we define $g \langle Z\rangle = (AZ+B)(CZ+D)^{-1}$ and we denote $J(g,Z) = CZ + D$. For a function $F: \H_2 \rightarrow \C$, a matrix $g \in \GSp_4(\R)^+$, and an integer $k$, we define the function $F|_kg: \H_2 \rightarrow \C$ by $(F|_kg)(Z) = \mu(g)^k \det(J(g, Z))^{-k} F(g \langle Z\rangle)$.

\subsection{Acknowledgments}
We thank Ariel Weiss and Patrick Dynes for comments. At the first stages of this work, BP was a JSPS postdoctoral fellow at Kyushu University, Japan and was
supported by JSPS KAKENHI Grant No. 19F19318.
He would like to thank his academic host Professor Masanobu Kaneko for the support.
This work is supported by the Engineering and Physical Sciences Research Council [grant number EP/W001160/1].

\section{Preliminaries}\label{s:prelim}

\subsection{Basics of Siegel cusp forms}
Let $k$ and $N$ be positive integers. Let $S_k(\Gamma^{(2)}_0(N))$ denote the space of Siegel cusp forms of weight $k$ and
of degree $2$ for the group $\G^{(2)}_0(N)$
defined by
\begin{equation}\label{defu1n}
 \Gamma^{(2)}_0(N) = \Sp_4(\Z) \cap \left(\begin{smallmatrix}\Z& \Z&\Z&\Z\\\Z& \Z&\Z&\Z\\N\Z& N\Z&\Z&\Z\\N\Z&N \Z&\Z&\Z\\\end{smallmatrix}\right).
\end{equation}
Recall that the elements of $S_k(\Gamma^{(2)}_0(N))$ consist of holomorphic functions $F$ on
$\H_2$ which satisfy the relation
\begin{equation}\label{siegeldefiningrel}
F|_k \gamma= F \quad  \text{for all }\gamma \in \Gamma^{(2)}_0(N),
\end{equation}
and vanish at all the
cusps.  For a precise formulation of this cusp vanishing condition, see~\cite[I.4.6]{Fr1991};
 for definitions and basic properties
of Siegel modular forms we refer the reader to \cite{andzhu}. Given $F_1$ and $F_2$ in $S_k(\Gamma^{(2)}_0(N))$, we define their Petersson inner product by
\begin{equation}\label{eqn:petersson-def}
\langle F_1, F_2\rangle
=
\frac{1}{[\Sp(4,\Z):\Gamma_0^{(2)}(N)]}\;
\int\limits_{\Gamma_0^{(2)}(N) \bs \H_2} F_1(Z) \overline{F_2(Z)} (\det Y)^{k - 3}\,dX\,dY.
\end{equation}

For any positive integer $m \in \N$ with $\gcd(m,N)=1$,
the Hecke operator $T(m)$ acting on the space $S_k(\Gamma^{(2)}_0(N))$ is defined by
$$
T(m) F
~=~
m^{k-3}\sum_{\gamma \in \G_0^{(2)}(N) \backslash \mathcal O_{2, m}(N)}
F |_k \gamma,
$$
where
$$
\mathcal O_{2, m}(N)
~:=~
\left\{
\gamma\in  \left(\begin{smallmatrix}\Z& \Z&\Z&\Z\\\Z& \Z&\Z&\Z\\N\Z& N\Z&\Z&\Z\\N\Z&N \Z&\Z&\Z\\\end{smallmatrix}\right)  ~|~ \gamma^t J \gamma = m J
\right\}.
\phantom{m}
$$
We say that $F \in S_k(\Gamma^{(2)}_0(N))$ is a \emph{Hecke eigenform at all good primes} if there exist complex numbers
$\eta(m)$ such that $T(m)F = \eta(m) F$ for all $m \in \N$ satisfying $\gcd(m,N)=1$. Using the self-adjointness of the Hecke operators (see, e.g., Proposition 1.8 of Chapter 4 of \cite{andzhu}) it follows that the Hecke eigenvalues $\eta(m)$ associated to a Hecke eigenform are in fact all \emph{real} numbers.
As in the case of elliptic modular forms, the Hecke eigenvalues
$\eta(m)$ are multiplicative. We also use the normalized Hecke eigenvalues which are defined as:
\begin{equation}\label{n-Hecke}
\lambda(m) ~:=~ \frac{\eta(m)}{m^{k - 3/2}}
\phantom{m}\text{for}\phantom{m}
m \in \N.
\end{equation}

\subsection{Automorphic representations of $\GSp_4(\A)$ and $L$-functions}\label{s:autprelim}
Given $F \in S_k(\Gamma^{(2)}_0(N))$, we let $\pi_F$ denote the representation of $\GSp_4(\A)$ generated by the adelization (in the sense of \cite[\S3]{sahapet}) of $F$. The representation $\pi_F$ is of trivial central character and can be written as a direct sum of  irreducible, cuspidal, automorphic representations of $\GSp_4(\A)$ of trivial central characters. We denote by $\Pi(F)$ the set of irreducible automorphic representations of $\GSp_4(\A)$ that occur in the above direct sum decomposition of $\pi_F$. Clearly all elements of $\Pi(F)$ are also of trivial central character. As a consequence of multiplicity one for $\GSp_4$ due to Arthur \cite{JA13}, it follows that if $\pi \in \Pi(F)$  then $\pi$ occurs in $\pi_F$ with multiplicity one. We say that an element $F$ of $S_k(\Gamma^{(2)}_0(N))$ gives rise to an irreducible representation if $\pi_F$ is irreducible, or equivalently, if $\Pi(F)$ is a singleton set.

It can be seen easily \cite[Prop. 3.12]{sahapet} that if $F$ gives rise to an irreducible representation then $F$ is a Hecke eigenform at all good primes. While the converse is not true (due to failure of strong multiplicity one), we have the following lemma which clarifies the situation.
\begin{lem}\label{l:equivconds}Let $F \in S_k(\Gamma^{(2)}_0(N))$. Then the following conditions are equivalent:
\begin{enumerate}
\item \label{c1} If $\pi_1$ and $\pi_2$ are any two elements of $\Pi(F)$ then $\pi_{1,p} \simeq \pi_{2,p}$ for all primes $p \nmid N$.

\item \label{c2} If $\pi_1$ and $\pi_2$ are any two elements of $\Pi(F)$ then $\pi_{1,p} \simeq \pi_{2,p}$ for almost all primes $p \nmid N$.

\item \label{c3}$F$ is a Hecke eigenform at all good primes, i.e., all primes not dividing $N$.

\item \label{c4} $F$ is a Hecke eigenform at almost all good primes, i.e., there exits a multiple $N'$ of $N$ such that $F$ is an eigenform for the Hecke operators $T(m)$ for all $m \in \N$ satisfying $\gcd(m,N)=1$.
\end{enumerate}
\end{lem}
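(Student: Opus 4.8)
The plan is to prove the cyclic chain of implications $(\ref{c1}) \Rightarrow (\ref{c3}) \Rightarrow (\ref{c2}) \Rightarrow (\ref{c4}) \Rightarrow (\ref{c1})$, since $(\ref{c1}) \Rightarrow (\ref{c2})$ and $(\ref{c3}) \Rightarrow (\ref{c4})$ are trivial, and the substance lies in closing the loop. The conceptual bridge throughout is the standard dictionary between Hecke eigenvalues at a good prime $p$ and the Satake parameters (equivalently, the isomorphism class) of the unramified local component $\pi_p$ of a constituent: an unramified representation of $\GSp_4(\Q_p)$ of trivial central character is determined up to isomorphism by the action of the spherical Hecke algebra $\mathcal{H}_p$ at $p$, which is generated by $T(p)$ and $T(p^2)$, and conversely a joint eigenspace for $\mathcal{H}_p$ inside $\pi_{F,p} = \bigoplus_{\pi \in \Pi(F)} \pi_p$ picks out those $\pi$ with a prescribed $\pi_p$.

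First I would establish $(\ref{c1}) \Rightarrow (\ref{c3})$. Assuming all local components at good primes agree across $\Pi(F)$, fix such a $\pi_p$; it is unramified with trivial central character, so it carries a one-dimensional space of spherical vectors on which $\mathcal{H}_p$ acts by a character $\omega_p$. The adelization of $F$ lives in $\pi_F$ and is right-invariant under the local group $\GSp_4(\Z_p)$ at each $p \nmid N$ (this is part of how the level-$N$ structure translates under adelization, see \cite[\S3]{sahapet}), hence lies in the spherical subspace, on which $\mathcal{H}_p$ acts by the single character $\omega_p$; translating back through the classical-adelic dictionary for Hecke operators gives $T(m)F = \eta(m)F$ for all $\gcd(m,N)=1$, i.e.\ $(\ref{c3})$. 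For $(\ref{c3}) \Rightarrow (\ref{c2})$: if $F$ is a Hecke eigenform at all good primes then, for any $\pi \in \Pi(F)$, the spherical vector of $\pi_p$ corresponding to the adelization of $F$ has the same $\mathcal{H}_p$-eigenvalues $\eta(p), \eta(p^2)$ independent of $\pi$, and since an unramified $\pi_p$ with trivial central character is pinned down by these eigenvalues, all the $\pi_p$ coincide — giving even the stronger $(\ref{c1})$, but in particular $(\ref{c2})$. The implication $(\ref{c2}) \Rightarrow (\ref{c4})$ is where I would invoke Arthur's classification: by $(\ref{c2})$ any two constituents $\pi_1, \pi_2 \in \Pi(F)$ are nearly equivalent, hence (by the compatibility of $\Pi(F)$ with Arthur's global packet structure, cf.\ the facts recorded around Proposition \ref{p:lfunctionsfacts}) lie in a common global Arthur packet; the finitely many ramified places among them then force $\pi_{1,p} \simeq \pi_{2,p}$ for \emph{all} but finitely many $p$, and choosing $N'$ to be $N$ times the product of those finitely many bad primes gives a level at which $F$ is a genuine Hecke eigenform, which is $(\ref{c4})$. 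Finally $(\ref{c4}) \Rightarrow (\ref{c1})$: eigen-ness at almost all good primes makes the constituents nearly equivalent, so again they share a global Arthur packet; but within a single Arthur packet for $\GSp_4$, two members having isomorphic components at almost all places in fact have isomorphic components at \emph{all} places where they are both unramified — this uses that the local packets at unramified places are singletons — so $\pi_{1,p} \simeq \pi_{2,p}$ for every $p \nmid N$, which is $(\ref{c1})$.

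The main obstacle is the step $(\ref{c4}) \Rightarrow (\ref{c1})$ (and the analogous use of Arthur in $(\ref{c2}) \Rightarrow (\ref{c4})$): one must upgrade "isomorphic at almost all places" to "isomorphic at all good places", and this genuinely fails for $\GSp_4$ at the level of strong multiplicity one (as the excerpt explicitly flags), so the argument must route through Arthur's classification and the explicit structure of global and local Arthur packets on $\GSp_4$ — in particular the fact that at an unramified place the local $A$-packet is a singleton, so near-equivalence cannot hide a local discrepancy at a good prime. Everything else is the routine translation between classical Hecke operators $T(m)$ and the spherical Hecke algebras $\mathcal{H}_p$, together with the observation that $\mathcal{H}_p$ for $\GSp_4$ is generated by $T(p)$ and $T(p^2)$ so that knowing these two eigenvalues determines the unramified $\pi_p$ (with trivial central character) up to isomorphism. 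I would therefore organize the write-up so that the representation-theoretic input from Arthur is isolated into the two implications above, citing \cite{JA13} and Proposition \ref{p:lfunctionsfacts}, and keep the remaining implications as short dictionary arguments.
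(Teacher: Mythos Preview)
Your proof is essentially correct but routes the key step through Arthur's classification, which is heavier machinery than the paper uses. The paper proves $(\ref{c1}) \Leftrightarrow (\ref{c2})$ directly by invoking Lemma~3.1.2 of \cite{sch05} (a 2005 result predating Arthur's book): near-equivalence of two cuspidal automorphic representations of $\GSp_4$ forces their local components at every place to be constituents of the same induced representation, and since both $\pi_{1,p}$ and $\pi_{2,p}$ are spherical for $p \nmid N$, the uniqueness of the spherical constituent in any such induced representation gives $\pi_{1,p} \simeq \pi_{2,p}$. The equivalences $(\ref{c1}) \Leftrightarrow (\ref{c3})$ and $(\ref{c2}) \Leftrightarrow (\ref{c4})$ are then dispatched by citing \cite[Prop.~3.12]{sahapet}, which is exactly the Hecke-eigenvalue/Satake-parameter dictionary you describe. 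So the paper's argument is shorter and more elementary: no global packet structure is needed, only the local fact about spherical constituents plus a pre-Arthur near-equivalence lemma.

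One genuine imprecision in your argument: the claim that ``local packets at unramified places are singletons'' is not correct for Arthur $A$-packets on $\GSp_4$ in the CAP cases --- e.g.\ in the Saito--Kurokawa case the local $A$-packet at a good prime typically contains both the non-tempered spherical type IIb representation and a tempered non-spherical companion. What is true (and suffices) is that each local $A$-packet contains a \emph{unique spherical} member; since both $\pi_{1,p}$ and $\pi_{2,p}$ are spherical at $p \nmid N$, they coincide. Your $(\ref{c2}) \Rightarrow (\ref{c4})$ step also invokes Arthur unnecessarily: once the local components agree at almost all $p \nmid N$, the dictionary argument you already gave for $(\ref{c1}) \Rightarrow (\ref{c3})$ applies verbatim with $N$ replaced by a suitable multiple $N'$.
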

\begin{proof}The equivalence of \eqref{c1} and \eqref{c2} follows from Lemma 3.1.2 of \cite{sch05}, the fact that $\pi_{i,p}$ is spherical for all $p \nmid N$, and the fact that there is a unique spherical constituent in each induced representation. The equivalence of \eqref{c1} and \eqref{c3}, as well as of \eqref{c2} and \eqref{c4},  follows from Proposition 3.12 of \cite{sahapet}.
\end{proof}

Given $F \in S_k(\Gamma^{(2)}_0(N))$, and $\pi \in \Pi(F)$, we can take the projection $\phi'_F$ of the adelization $\phi_F$ of $F$ onto the $\pi$-isotypic subspace of $\pi_F$, and then unadelize $\phi'_F$ to obtain some $F' \in S_k(\Gamma^{(2)}_0(N))$ that is a Hecke eigenform at all good primes  and whose adelization generates the space $V_\pi$ of $\pi$. \emph{Therefore, given an  irreducible cuspidal automorphic representation $\pi$ of $\GSp_4(\A)$, there exists $F \in S_k(\Gamma^{(2)}_0(N))$ such that $\pi = \pi_F$ if and only if there exists $F \in S_k(\Gamma^{(2)}_0(N))$ such that $\pi \in \Pi(F)$.} If $\pi$ has the above property, we say that $\pi$ arises from $S_k(\Gamma^{(2)}_0(N))$.


    We say that an irreducible cuspidal automorphic representation $\pi$ of $\GSp_4(\A)$ is CAP if it is nearly equivalent 
to a constituent of a global induced representation of a proper parabolic subgroup of $\GSp_4(\A)$. If $F \in S_k(\Gamma^{(2)}_0(N))$ is a Hecke eigenform at all good primes, we say that $F$ is CAP (or $F$ is attached to a CAP representation) if some (equivalently, every) $\pi \in \Pi(F)$ is CAP.

Given an irreducible cuspidal automorphic representation $\pi$ of $\GSp_4(\A)$ of
trivial central character, we let
\begin{equation}\label{Spinor-D}
L(s, \pi) = \prod_{p<\infty}L(s, \pi_p)
\end{equation}
denote the (finite part of the) degree $4$ (spinor) $L$-function associated to $\pi$, where
the local factors $L(s, \pi_p)$ are defined using the local Langlands
correspondence \cite{GT11} for $\GSp_4$. For each prime $p \nmid N$,
there are complex numbers (known as Satake parameters) $\alpha_{p,i}$, $1\le i \le 4$ satisfying
$\alpha_{p,1} \alpha_{p,2} = \alpha_{p,3} \alpha_{p,4} = 1$ such
that $L(s, \pi_p) = \prod_{i = 1}^4 \left( 1 - \frac{\alpha_{p, i}}{p^s} \right)^{-1}$. By the work of Arthur \cite{JA13} and using the accidental isomorphism $\mathrm{PGSp}_4 \simeq \mathrm{SO}_5$, we know that $L^N(s, \pi) = L^N(s, \Pi)$ where $\Pi$ is an isobaric automorphic representation of $\GL_4(\A)$.

Given two irreducible cuspidal automorphic representation $\pi_1$
and $\pi_2$ of $\GSp_4(\A)$ of trivial central characters, we let
\begin{equation}\label{Rankin}
L(s, \pi_1\times \pi_2) = \prod_{p<\infty} L(s, \pi_{1,p}\times \pi_{2,p})
\end{equation}
denote the associated degree $16$ Rankin-Selberg product of the spin $L$-functions of $\pi_1$ and $\pi_2$, where each local factor is defined using the local Langlands correspondence. For each prime $p \nmid N$, we have $L(s, \pi_{1,p} \times \pi_{2,p}) = \prod_{1\le i,j \le 4}\left( 1 - \alpha_{p,i}\beta_{p,j}p^{-s} \right)^{-1}$.  As before, using work of Arthur, we have that $L^N(s, \pi_1 \times \pi_2) = L^N(s, \Pi_1 \times \Pi_2)$ where $\Pi_1$ and $\Pi_2$ are isobaric automorphic representations of $\GL_4(\A)$. By the general theory of Rankin-Selberg $L$-functions on $\GL_n$, the $L$-function $L(s, \pi_1 \times \pi_2)$ has analytic continuation to $\C$
as a meromorphic function which is non-vanishing
on $\Re(s) = 1$.

 Let $F \in S_k(\Gamma^{(2)}_0(N))$ be a Hecke eigenform at all good primes and let $\pi \in \Pi(F)$. Then we have the following relation between the degree $4$ $L$-function of $\pi$ and the normalized Hecke eigenvalues of $F$: \begin{equation}\label{Spinor}
L^N(s, \pi)
~=~
\zeta^N(2s+1) \sum_{\substack{m = 1 \\ (m,N)=1}}^\infty \frac{\lambda(m)}{m^s}
~=~
\prod_{p\nmid N} \prod_{i = 1}^4 \left( 1 - \frac{\alpha_{p, i}}{p^s} \right)^{-1}.
\end{equation}

\subsection{Arthur packets}\label{s:arthur} Cuspidal automorphic representation of $\GSp_4$ with trivial central character can be naturally viewed as representations of  $\PGSp_4 \simeq \SO_5$.  Arthur \cite{JA13} has given a
classification of the discrete automorphic spectrum of $\SO_5$ into \emph{Arthur packets} in terms of automorphic representations of
general linear groups. In the following proposition, we state this classification for the $\pi$ which arise from $S_k(\Gamma^{(2)}_0(N))$ and write some key properties  for each packet.

\begin{prop}\label{p:lfunctionsfacts}Suppose that $F \in S_k(\Gamma^{(2)}_0(N))$ is a Hecke eigenform at all good primes. Let the normalized Hecke eigenvalues $\lambda(m)$ of $F$ be as defined in \eqref{n-Hecke}. Then exactly one of the following cases must occur.
\begin{enumerate}
\item\label{gen}$F$ is of \emph{general} type: This case can only occur if $k\ge 2$. There exists an irreducible \emph{cuspidal} automorphic representation $\Pi$ of $\GL_4(\A)$ with trivial central character such that each $\pi \in \Pi(F)$ has a strong functorial lifting to $\Pi$. In this case, $L(s, \pi)=L(s, \Pi)$ is entire and $L(s, \pi \times \pi) = L(s, \Pi\times \Pi)$ has a simple pole at $s=1$. The representation $\Pi$ is unramified at the finite primes not dividing $N$.

\item $F$ is of \emph{Yoshida} type: This case can only occur if $N>1$ and $k\ge 2$. There exists an  automorphic representation $\pi_1$ of $\GL_2(\A)$ attached to a cuspidal newform $f_1$ of weight $2$ for $\Gamma_0(N_1)$ and an  automorphic representation $\pi_2$ of $\GL_2(\A)$ attached to a cuspidal newform $f_2$ of weight $2k-2$ for $\Gamma_0(N_2)$, with $\pi_1$ and $\pi_2$ non-isomorphic, such that for each $\pi \in \Pi(F)$, we have $L(s, \pi) = L(s, \pi_1)L(s, \pi_2)$. Thus, the functorial lift of the representation $\pi$ to $\GL_4(\A)$ is the isobaric sum of two distinct, unitary, cuspidal automorphic representations of $\GL_2(\A)$ with trivial central characters and $L(s, \pi)$ is entire. Any prime $p$ dividing $N_1N_2$ must divide $N$. For each prime $p$ coprime to $N$ we have $\lambda(p) = \lambda_1(p) + \lambda_2(p)$ where $\lambda_i(p)$ is the normalized Hecke eigenvalue of $f_i$ at the prime $p$. We have the factorization \[L(s, \pi \times \pi) = L(s, \pi_1 \times \pi_2)^2 \zeta(s)^2 L(s, \sym^2 \pi_1) L(s, \sym^2 \pi_2)\] and hence $L(s, \pi \times \pi)$ has a pole of order $2$ at $s=1$.

 \item \label{case:SK} $F$ is of \emph{Saito--Kurokawa} type: This case can only occur if $k \ge 2$; moreover if $N$ is squarefree, then $k$ must be even. There exists a representation $\pi_0$ of $\GL_2(\A)$ of trivial central character  attached to a cuspidal holomorphic newform of weight $2k-2$ and a primitive Dirichlet character $\chi_0$ satisfying $\chi_0^2=1$ such that \[L^N(s, \pi) = L^N(s, \pi_0) L^N(s+1/2, \chi_0)  L^N(s-1/2, \chi_0)\] for each representation $\pi \in \Pi(F)$. In particular, each such $\pi$ is CAP with respect to the Siegel parabolic.  The representation $\pi_0$ and the character $\chi_0$ are unramified at the finite primes not dividing $N$. For each prime $p$ coprime to $N$ we have $\lambda(p) = \lambda_0(p) + p^{1/2}\chi_0(p) + p^{-1/2}\chi_0(p)$ where $\lambda_0(p)$ is the normalized Hecke eigenvalue of $\pi_0$ at the prime $p$.

     If $F$ is of Saito--Kurokawa type with $\chi_0$ trivial (which is \emph{always} the case if $N$ is squarefree), we will say that $F$ is of \emph{\textbf{classical Saito-Kurokawa}} type.

 \item $F$ is of \emph{Soudry} type: This case can only occur if $k \in \{1,2\}$ and $N>1$. There exists a unitary cuspidal automorphic representation $\pi_0$ of $\GL_2(\A)$  attached to a CM cuspidal holomorphic newform $f_0$ of weight 1, level $N_0$ and character $\xi_0$ such that \[L^N(s, \pi) = L^N(s+1/2, \pi_0) L^N(s-1/2, \pi_0)\] for each $\pi \in \Pi(F)$.  Therefore each representation $\pi \in \Pi(F)$ is CAP with respect to the Klingen parabolic. The character $\xi_0$ is a non-trivial quadratic Dirichlet character of conductor dividing $N_0$, and any prime $p$ dividing $N_0$ must divide $N$.  For each prime $p$ coprime to $N$ we have $\lambda(p) = p^{1/2}\lambda_0(p) + p^{-1/2}\lambda_0(p)$ where $\lambda_0(p)$ is the normalized Hecke eigenvalue of $\pi_0$ at the prime $p$.

 \item $F$ is of \emph{Howe--Piatetski-Shapiro} type: This case can only occur if $k \in \{1,2\}$ and $N>1$. Each representation $\pi \in \Pi(F)$ is CAP with respect to the Borel parabolic. There exist distinct quadratic Dirichlet characters $\chi_1$ and $\chi_2$ such that \[L^N(s, \pi) = L^N(s+1/2, \chi_1) L^N(s-1/2, \chi_1)L^N(s+1/2, \chi_2) L^N(s-1/2, \chi_2).\]The representation $\chi_1$ and $\chi_2$ are unramified at the finite primes not dividing $N$. For each prime $p$ coprime to $N$ we have $\lambda(p) = p^{1/2}\chi_1(p) + p^{-1/2}\chi_1(p) + p^{1/2}\chi_2(p) + p^{-1/2}\chi_2(p).$

\end{enumerate}

\end{prop}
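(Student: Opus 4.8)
The plan is to derive the five-fold classification directly from Arthur's description of the discrete automorphic spectrum of $\SO_5 \simeq \PGSp_4$, together with the constraints imposed on the archimedean component by the holomorphy and weight of $F$. Fix $F$ as in the statement; by \lemref{l:equivconds} the set $\Pi(F)$ is well-defined and every $\pi \in \Pi(F)$ is an irreducible cuspidal automorphic representation of $\GSp_4(\A)$ of trivial central character, which therefore factors through $\PGSp_4(\A) \simeq \SO_5(\A)$ and occurs in the discrete (indeed cuspidal) spectrum. First I would invoke Arthur's classification \cite{JA13}: $\pi$ belongs to a global Arthur packet attached to an elliptic parameter $\psi$, and via the standard (spin) transfer this produces an isobaric, self-dual, symplectic automorphic representation $\Pi_\psi$ of $\GL_4(\A)$ with $L^N(s,\pi) = L^N(s, \Pi_\psi)$. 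Because $L^N(s,\pi)$ depends only on the near-equivalence class of $\pi$, \lemref{l:equivconds} guarantees that $\Pi_\psi$ is the same for every $\pi \in \Pi(F)$, and strong multiplicity one on $\GL_4$ shows that the isomorphism class of $\Pi_\psi$ --- and hence the ``type'' of $F$ --- is well-defined; this already gives the mutual exclusivity of the five cases.

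The combinatorial heart of the classification is the enumeration of elliptic symplectic parameters into $\GL_4$. Each $\psi$ is a formal isobaric sum $\boxplus_i\, \mu_i \boxtimes \nu_{d_i}$ with $\mu_i$ cuspidal self-dual on $\GL_{m_i}$, $\nu_{d_i}$ the $d_i$-dimensional representation of $\SL_2(\C)$, $\sum_i m_i d_i = 4$, each block $\mu_i \boxtimes \nu_{d_i}$ symplectic, and the blocks pairwise distinct. Running through the partitions of $4$ I would obtain exactly six admissible patterns; the pattern $\mu \boxtimes \nu_4$ (a single quadratic character times the four-dimensional Arthur $\SL_2$) is residual rather than cuspidal and is excluded because $\pi$ is cuspidal. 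The remaining five patterns are precisely the stated cases: a single cuspidal $\GL_4$ block (General), two distinct cuspidal $\GL_2$ blocks of trivial central character (Yoshida), a cuspidal $\GL_2$ block together with a $\chi \boxtimes \nu_2$ block with $\chi$ quadratic (Saito--Kurokawa), a single block $\pi_0 \boxtimes \nu_2$ with $\pi_0$ orthogonal, i.e.\ of nontrivial quadratic central character (Soudry), and two distinct blocks $\chi_i \boxtimes \nu_2$ with $\chi_i$ quadratic (Howe--Piatetski-Shapiro). The symplectic/orthogonal constraints on each block are exactly what force trivial central character in the $\GL_2$ pieces of the tempered types and quadratic characters in the $\nu_2$ blocks. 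Any block with a nontrivial Arthur $\SL_2$ makes $\pi$ non-tempered and CAP, and the position of the $\nu_2$ block inside the relevant Levi identifies the associated parabolic (Siegel, Klingen, Borel respectively); the entirety of $L(s,\pi)$ in the General and Yoshida cases and the remaining meromorphy statements follow from cuspidality of the $\GL_2$/$\GL_4$ constituents.

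Next I would impose the archimedean constraint, which distributes the cases among the allowed ranges of $k$ and pins down the weights. Since $F$ is holomorphic of weight $k$, the component $\pi_\infty$ is the holomorphic discrete series of $\GSp_4(\R)$ of scalar weight $k$ when $k \ge 3$, a holomorphic limit of discrete series when $k=2$, and an appropriate non-tempered lowest-weight module when $k=1$; in each case the infinitesimal character of $\pi_\infty$ is explicit (Harish-Chandra parameter $(k-1,k-2)$ up to normalization). Matching this against the archimedean parameter $\psi_\infty$ forces the archimedean weights of the individual blocks: this yields $k \ge 2$ for the General and Yoshida types and identifies, in the Yoshida case, the two cuspidal $\GL_2$ pieces with holomorphic newforms of weights $2$ and $2k-2$; for Saito--Kurokawa it forces $\pi_0$ to arise from a weight $2k-2$ newform with $k \ge 2$; and a $\nu_2$ block spanning all four dimensions (Soudry) or the whole parameter (Howe--PS) forces the small-weight range $k \in \{1,2\}$, with $\pi_0$ in the Soudry case a weight-one form, necessarily dihedral and hence of nontrivial quadratic central character $\xi_0$. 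The assertions that $N>1$ in the Yoshida, Soudry, and Howe--PS cases come from the fact that the corresponding Dirichlet/$\GL_2$ pieces cannot all be unramified (there is no cuspidal form of weight $2$ or $1$, and no nontrivial quadratic character, of level $1$).

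Finally I would read off the explicit Hecke-eigenvalue relations and Rankin--Selberg pole orders. Writing $\Pi_\psi$ as an honest isobaric sum --- each block $\mu \boxtimes \nu_2$ contributing the two shifts $\mu|\cdot|^{1/2} \boxplus \mu|\cdot|^{-1/2}$ --- and comparing Satake parameters at each $p \nmid N$ via $L^N(s,\pi) = L^N(s,\Pi_\psi)$ gives the stated formulas for $\lambda(p)$ (for instance a $\chi_0 \boxtimes \nu_2$ block contributes $p^{1/2}\chi_0(p)+p^{-1/2}\chi_0(p)$). For the self-Rankin $L$-function I would use that trivial central character gives $L(s, \pi_i \times \pi_i) = \zeta(s)\,L(s, \sym^2 \pi_i)$ for each $\GL_2$ piece, while $L(s, \pi_1 \times \pi_2)$ is holomorphic and nonzero at $s=1$ when $\pi_1 \not\simeq \pi_2$; expanding $L(s, \pi \times \pi) = L(s, \Pi_\psi \times \Pi_\psi)$ then yields the simple pole in the General case and the displayed order-two factorization in the Yoshida case. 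I expect the genuine difficulty to lie not in this bookkeeping but in the archimedean step and, above all, in the refined Saito--Kurokawa assertion that $\chi_0$ is trivial and $k$ even whenever $N$ is squarefree: this is not formal and requires Arthur's multiplicity formula, comparing the sign $\epsilon(1/2, \pi_0)$ against the character of the relevant component group together with a ramification analysis at the primes dividing $N$, for which I would lean on the packet computations of \cite{sch05} and \cite{pssmb}.
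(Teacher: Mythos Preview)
Your proposal is correct and follows essentially the same route as the paper: both derive the five-fold classification from Arthur's enumeration of elliptic parameters for $\PGSp_4 \simeq \SO_5$, read off the $L$-function and Hecke-eigenvalue identities from the isobaric transfer, and use the archimedean parameter to pin down the weight constraints. The paper carries out the packet/table look-ups via Schmidt's papers \cite{RS18, schcap} rather than \cite{sch05} and \cite{pssmb}, and for the squarefree Saito--Kurokawa refinement it first invokes Borel's result \cite{Borel1976} to force $\chi_0 = 1$ and then performs exactly the multiplicity-formula sign comparison you anticipate (using a forward reference to \lemref{l:SK properties} for the local possibilities at $p\mid N_0$); but these are differences of citation rather than of strategy.
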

\begin{proof}This follows from the work of Arthur \cite{JA13}; see also the papers of Schmidt \cite{RS18, schcap}. For the convenience of the reader, we explain some of the key points below.

Let $\pi \in \Pi(F)$. Then, as explained in Section 1.1 of \cite{RS18}, the global (Arthur) parameter $\psi$ of $\pi$ in the sense of \cite{JA13} is equal to a formal expression of the form
$\sum_{i=1}^r \mu_i \boxplus \nu(i)$ where $\mu_i$ is a self-dual, unitary, cuspidal, automorphic representation of $\GL_{m_i}(\A)$, $\nu(i)$ is the irreducible representation of $\SL_2(\C)$ of dimension $n_i$, and $\sum_{i=1}^r m_in_i =4$. Using strong multiplicity 1 for $\GL_n(\A)$, it is easy to see that the global parameter does not depend on which $\pi \in \Pi(F)$ we choose; in particular the global  parameter $\psi$ depends only on $F$.  The local component $\pi_v$ at each place $v$ belongs to the local packet associated to the corresponding local parameter $\psi_v$. In particular, this means that if $p \nmid N$, then each $\mu_{i,p}$ is unramified since $\pi_p$ is spherical.

The possibilities for the global parameter can be divided into the following types where we use the notation in \cite{RS18}. We enumerate them in the same order as in the statement of this proposition.

\begin{enumerate}
\item $\psi = \mu \boxtimes 1$, where $\mu$ is a self-dual, symplectic, unitary, cuspidal, automorphic representation of $\GL_4(\A)$.    The assertions follow by taking $\Pi=\mu$. The fact that $\pi$ has a strong lifting to $\Pi$ is just the equivalence of local Arthur parameters and local (Langlands) $L$-parameters in this setting; see the remarks before Theorem 1.1 of \cite{RS18}.    The other assertions follow from standard properties of $L$-functions on $\GL_n(\A)$.

\item  $\psi = \mu_1 \boxtimes 1 \boxplus \mu_2 \boxtimes 1$ where $\mu_1$ and $\mu_2$ are distinct, unitary, cuspidal, automorphic representation of $\GL_2(\A)$ of trivial central characters. We put $\pi_i = \mu_i$. From the archimedean local parameters, we obtain the existence of newforms of weights 2 and $2k-2$ attached to $\pi_1$ and $\pi_2$ as desired. Again, the equivalence of local Arthur parameters and local (Langlands) $L$-parameters in this setting gives the desired expressions for $L(s, \pi)$ and $L(s, \pi \times \pi)$. The formula for the Hecke eigenvalue is immediate from this. Clearly $2k-2 \ge 2$ and so $k\ge 2$.

\item $\psi = (\mu \boxtimes 1) \boxplus (\sigma \boxtimes \nu(2))$ where $\mu$ is a unitary, cuspidal, automorphic representation of $\GL_2(\A)$ of trivial central character and $\sigma$ is a quadratic Dirichlet character. We take $\pi_0=\mu$ and $\chi_0 = \sigma$. The fact that $\pi_0$ corresponds to a holomorphic form of weight $2k-2$, $k \ge 2$ follows immediately by looking at the archimedean local parameters (see the last row of Table 2 of \cite{schcap}). The expression for the $L$-functions (and the Hecke eigenvalues) is a consequence of the corresponding identity relating the local $L$-parameters at all primes not dividing $N$ (see the first row of Table 2 of \cite{schcap}).

    The criterion for $\pi$ to occur in the discrete spectrum is expressed by the sign condition $\prod_v\epsilon(\pi_v) = \epsilon(1/2, \pi_0 \times \chi_0)$ coming from Arthur's multiplicity formula (see the condition above Lemma 1.2 of \cite{schcap}). Above, $\epsilon_v(\pi_v) \in \{\pm 1\}$ is a certain sign occurring in Arthur's work.
     Now assume that $N$ is squarefree. In this case $\chi_0$ must be unramified everywhere by a well-known result of Borel \cite{Borel1976} and hence must be trivial. This implies that in this case $\prod_v\epsilon(\pi_v) = \epsilon(1/2, \pi_0).$ We claim that $k$ must be even in this case. To show this, let $N_0$ be the conductor of $\pi_0$. Then by part (2) of Lemma \ref{l:SK properties} below, $N_0$ divides $N$ and for each $p|N_0$ one of the following possibilities must hold: \begin{itemize}
     \item $\pi_p\simeq \tau(T, \nu^{-1/2})$ is  of type VIb and $\pi_{0,p} = \mathrm{St}_{\GL(2)}$,

      \item  $\pi_p\simeq L((\nu^{1/2}\xi_p\mathrm{St}_{\GL(2)}, \nu^{-1/2})$ is of type Vb and $\pi_{0,p} = \xi_p\mathrm{St}_{\GL(2)}$.
       \end{itemize}

  Using the formulas for $\epsilon(\pi_v)$ in Table 2 of \cite{schcap} we have for all $p|N_0$ that $\epsilon(\pi_p) = \epsilon(1/2, \pi_{0,p})$ (more precisely, they are each equal to -1 in the first possiblity and +1 in the second possibility). If $p$ is a finite prime not dividing $N_0$, we have $\epsilon(\pi_p) = \epsilon(1/2, \pi_{0,p})=1.$ It follows that $\epsilon(\pi_\infty) = \epsilon(1/2, \pi_{0,\infty})$. Since $\pi$ is holomorphic of weight $k$, we have  $\epsilon(\pi_\infty) = -1$ by Table 2 of \cite{schcap}; on the other hand since $\pi_0$ corresponds to a holomorphic form of weight $2k-2$ we have $\epsilon(1/2, \pi_{0,\infty}) = (-1)^{k-1}$. Consequently, $(-1)^{k-1} = -1$ and so $k$ must be even whenever $N$ is squarefree.

\item $\psi = \mu \boxtimes \nu(2)$ where $\mu$ is a unitary, cuspidal, automorphic representation of $\GL_2(\A)$ of non-trivial quadratic central character satisfying $\mu = \mathrm{AI}_{E/\Q}(\theta)$ where $E$ is the quadratic field attached to the central character $\xi_0$ of $\mu$ and $\theta$ is a Hecke character of $E$. From the computation in Table 3 of \cite{schcap}, we see that $k$ equals 1 or 2 and that $\mu$  is attached to a CM cuspidal holomorphic newform of weight 1. Since the conductor $N_0$ of a CM newforms is never equal to 1, and since each prime dividing $N_0$ must divide $N$, it follows that $N>1$.  We take $\pi_0=\mu$ and the expression for the $L$-functions is a consequence of the corresponding identity of local $L$-parameters at all primes not dividing $N$ (see Table 3 of \cite{schcap}).

\item $\psi = (\chi_1 \boxtimes \nu(2)) \boxplus (\chi_1 \boxtimes \nu(2))$ where $\chi_1$ and $\chi_2$ are distinct  quadratic Dirichlet characters. From the computation in Table 1 of \cite{schcap}, we see that $k$ equals 1 or 2 . The expression for the $L$-functions (and as a consequence, the Hecke eigenvalues) follows from the corresponding identity of local $L$-parameters at all primes not dividing $N$ (see Table 1 of \cite{schcap}). Since at least one of the $\chi_i$ is non-trivial, and since each prime dividing the conductor of $\chi_1$ or $\chi_2$ must divide $N$, it follows that $N>1$.
\end{enumerate}
Finally, we note that a result of Weissauer \cite{Weiss92} asserts that if $k=1$ then $F$ must be attached to a CAP representation; in particular, it cannot be of Type 1.
\end{proof}
If $F \in S_k(\Gamma^{(2)}_0(N))$ is a Hecke eigenform at good primes that  is either of Saito--Kurokawa type, or of Soudry type, or of Howe--Piatetski-Shapiro type in the sense defined above then $\pi\in \Pi(F)$ is CAP and so is non-tempered everywhere.

On the other hand if $F \in S_k(\Gamma^{(2)}_0(N))$ is a Hecke eigenform at good primes that is of general type or Yoshida type in the sense defined above,  then one expects that the following conjecture is true.

\begin{conj}[Ramanujan conjecture, currently known for $k \ge 3$ but open for $k=2$] If $F \in S_k(\Gamma^{(2)}_0(N))$ is a Hecke eigenform at good primes that is non-CAP (i.e., of general type or Yoshida type), then for $\gcd(p, N)=1$ we have
$|\alpha_{p, i}| ~=~ 1$ for $1 \le i \le 4$ where $\alpha_{p, i}$ are the Satake parameters defined in \eqref{Spinor}. \end{conj}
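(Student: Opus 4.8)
The plan is to reduce to the two non‑CAP Arthur types isolated in \propref{p:lfunctionsfacts} and to treat each on its own terms. This yields the conjecture unconditionally for every $k\ge 3$, and for Yoshida type at all weights; the sole genuinely open case is general type in weight $2$, which is precisely why the assertion is recorded here as a conjecture rather than a theorem. For $k=1$ the hypothesis is vacuous, since (by Weissauer's theorem that every $F\in S_1(\G_0^{(2)}(N))$ which is a Hecke eigenform at good primes is CAP) no non‑CAP $F$ exists in that weight.

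\emph{Yoshida type.} Suppose $F$ is of Yoshida type. Then for $p\nmid N$ the spinor factor splits as $L(s,\pi_p)=L(s,\pi_{1,p})L(s,\pi_{2,p})$, where $\pi_1,\pi_2$ are the cuspidal automorphic representations of $\GL_2(\A)$ attached to the holomorphic newforms of weights $2$ and $2k-2$ appearing in \propref{p:lfunctionsfacts}. Hence the multiset $\{\alpha_{p,1},\dots,\alpha_{p,4}\}$ of Satake parameters of $\pi$ at $p$, normalized as in \eqref{Spinor}, is the union of the unitarily normalized Satake parameters of these two newforms at $p$. The Ramanujan--Petersson bound for holomorphic elliptic newforms --- Deligne's theorem, a consequence of the Weil conjectures (and already the Hasse bound in weight $2$) --- then gives $|\alpha_{p,i}|=1$ for $1\le i\le 4$. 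This step is unconditional for all $k\ge 2$, and Yoshida type does not occur for $k=1$.

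\emph{General type, $k\ge 3$.} Now $F$ is holomorphic of weight $\ge 3$, so the archimedean component of any $\pi\in\Pi(F)$ is a large discrete series, $\pi$ is cohomological for a regular coefficient system on the Siegel threefold, and the transfer $\Pi$ of \propref{p:lfunctionsfacts} is a regular algebraic, self‑dual, cuspidal automorphic representation of $\GL_4(\A)$. I would invoke the associated $\ell$‑adic Galois representation $\rho_{\pi,\ell}\colon\mathrm{Gal}(\overline\Q/\Q)\to\GSp_4(\overline{\Q_\ell})$ (Weissauer, Taylor, Laumon) together with its purity (Weissauer): purity combined with local--global compatibility at the primes $p\nmid N$ --- where $\rho_{\pi,\ell}$ is unramified and its geometric Frobenius eigenvalues are the $\alpha_{p,i}$ in the normalization of \eqref{Spinor} --- forces $|\alpha_{p,i}|=1$. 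Equivalently, one may cite directly the Ramanujan conjecture for regular algebraic, essentially self‑dual, cuspidal representations of $\GL_n$ over totally real fields (Clozel, Shin, Caraiani, building on Harris--Taylor and others) and apply it to $\Pi$.

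\emph{The obstruction.} The hard case --- the one that keeps this a conjecture --- is general type with $k=2$. Here the archimedean component is a limit of discrete series and the relevant cohomology sits in the trivial coefficient system, so although $\rho_{\pi,\ell}$ still exists (constructed via $p$‑adic deformation and congruences, e.g.\ by Taylor and by Weissauer), it is not known to be pure: control of the Frobenius eigenvalues is lost precisely at $\ell=p$, and there is no alternative source for the bound. Hence the argument above does not close in this case, and this is exactly why the Ramanujan conjecture is imposed as a hypothesis for $k=2$ (and $F$ non‑CAP) in \thmref{First-sign}, \thmref{split-inert} and \thmref{t:fcsignchange}.
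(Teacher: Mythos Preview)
Your discussion is accurate, but note that the statement in question is recorded in the paper as a \emph{conjecture}, not a theorem, and the paper does not attempt to prove it. The paper's treatment consists of the remarks immediately following the conjecture: it cites Weissauer \cite{RW09} for $k\ge 3$, observes that the Yoshida case reduces to the classical Ramanujan bound for holomorphic cusp forms on $\GL_2$ (Deligne), and states that the general-type case at $k=2$ remains open. Your write-up covers exactly the same ground and reaches the same conclusions.

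Where you differ from the paper is in the level of explanation you offer for the $k\ge 3$ general-type case: rather than simply citing Weissauer, you sketch the mechanism --- the $\ell$-adic Galois representation attached to the cohomological $\pi$, its purity, and local--global compatibility --- and you also point to the alternative route via the transfer $\Pi$ to $\GL_4$ and the Ramanujan theorem for regular algebraic self-dual cuspidal representations. Both are legitimate and well-known justifications; the paper simply opts for a black-box citation. Your account of the obstruction at $k=2$ (limit of discrete series, loss of purity) is also correct and more informative than the paper's bare statement that the case is open. In short, your proposal is not so much a proof as an expanded commentary on the known cases, which is appropriate here since no proof exists for the full statement.
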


A famous result of Weissauer \cite{RW09} implies that $F$ satisfies the Ramanujan conjecture whenever $k \ge 3$. More precisely, for such $F$, each $\pi \in \Pi(F)$ is \emph{tempered} at all good primes, i.e., for $\gcd(p, N)=1$ we have
$|\alpha_{p, i}| ~=~ 1$ for $1 \le i \le 4$ and hence for $\gcd(m, N)=1$, $| \lambda(m)| \le d_5(m)$, where $d_5(m)$ is the number of ways
of writing $m$ as product of $5$ positive integers.
If $k=2$, and $F$ is of Yoshida type, then we also know the Ramanujan conjecture for $F$ from our knowledge of the Ramanujan conjecture for classical holomorphic cusp forms on the upper-half plane.

However, if $k=2$ and $F \in S_2(\Gamma^{(2)}_0(N))$  is a Hecke eigenform at good primes that is of general type, then the Ramanujan conjecture remains open.  For several of our results in this paper, we will need to assume the Ramanujan conjecture in this outstanding case. Note that if $k=1$, then all forms in  $S_1(\Gamma^{(2)}_0(N))$ are CAP.

\begin{rmk}\label{l:weakramweight1}If $F \in S_2(\Gamma^{(2)}_0(N))$ is a Hecke eigenform at good primes that is of general type, then we know the  bound $|\alpha_{p, i}| \le p^{9/22}$ for all primes $p$ with $\gcd(p,N)=1$, which follows from the corresponding bound for $\GL_4$ representations due to Kim--Sarnak \cite{kimsar}.
\end{rmk}

From the self-adjointness of the Hecke operators, it follows that if $F_1$ and $F_2$ correspond to different cases (types) in the sense of Proposition \ref{p:lfunctionsfacts}, then $F_1$ and $F_2$ are orthogonal with respect to the Petersson norm. We let $S_k(\Gamma^{(2)}_0(N))^{\rm T}$ equal the vector space generated by the set of Hecke eigenforms at good primes which are of general type or Yoshida type. We let $S_k(\Gamma^{(2)}_0(N))^{\rm CAP}$ equal the vector space generated by the set of Hecke eigenforms at good primes  $F$ which are non-tempered, i.e., are either of Saito-Kurokawa type or of Soudry type or of Howe-- Piatetski-Shapiro type.
So the space $S_k(\Gamma^{(2)}_0(N))$ has a natural decomposition into orthogonal subspaces $$S_k(\Gamma^{(2)}_0(N))=S_k(\Gamma^{(2)}_0(N))^{\rm  T}  \oplus S_k(\Gamma^{(2)}_0(N))^{ \rm CAP}.$$

We let $S_k^*(\Gamma^{(2)}_0(N)) \subset S_k(\Gamma^{(2)}_0(N))^{ \rm CAP}$ be the subspace generated by the set of Hecke eigenforms  at good primes  which are of classical Saito-Kurokawa type. We will occasionally refer to the space $S_k^*(\Gamma^{(2)}_0(N))$ as the \emph{generalized Maass subspace}. From Proposition \ref{p:lfunctionsfacts} it is clear that if $k>2$ and $N$ is squarefree then $S_k^*(\Gamma^{(2)}_0(N)) = S_k(\Gamma^{(2)}_0(N))^{ \rm CAP}$.

\subsection{Analytic conductors and convexity bounds}
Given a prime $p$ and an irreducible admissible representation $\pi_p$ of $\GSp_4(\Q_p)$, we let $\epsilon(s, \pi_p)$ denote the epsilon factor of $\pi_p$ (with respect to a fixed  additive character $\psi_p$ of level 0), where the epsilon factor is defined using the local Langlands correspondence (in particular, we have $\epsilon(s, \pi_p) = \epsilon(s, \Pi_p)$ where $\Pi_p$ is the representation on $\GL_4(\Q_p)$ that is obtained by local Langlands transfer from $\pi_p$). We let $\epsilon(s, \pi_p \times \pi_p)= \epsilon(s, \Pi_p \times \Pi_p)$ denote the epsilon factor of the Rankin-Selberg $L$-function with respect to the same fixed unramified additive character $\psi_p$. We define the conductor exponents $a(\pi_p)$ and $a(\pi_p \times \pi_p)$ via \[\epsilon(s, \pi_p) = \epsilon(1/2, \pi_p) p^{-a(\pi_p)(s -1/2)},\] \[\epsilon(s, \pi_p \times \pi_p) = \epsilon(1/2, \pi_p \times \pi_p) p^{-a(\pi_p \times \pi_p)(s -1/2)}.\]

Since the conductor exponents above coincide with the corresponding conductor exponents arising from representations of $\GL_4(\Q_p)$, the results of \cite{BH17} and \cite{AC19} imply that there exist absolute positive constants $c_1$, $c_2$ and $d$ such that\footnote{With some bookkeeping, one can show that it suffices to take $c_2=7$, $c_1=2$ and $d=64$.} for any irreducible admissible representation $\pi_p$ of $\GSp_4(\Q_p)$ of trivial central character, we have the bound \begin{equation}\label{e:condbound} c_1 a(\pi_p) -d  \le a(\pi_p \times \pi_p) \le c_2a(\pi_p) \end{equation}

Given an irreducible cuspidal automorphic representation $\pi \simeq \otimes_v \pi_v$ of $\GSp_4(\A)$, we define its finite conductor $q(\pi)$ by  \begin{equation}\label{e:qpidef}q(\pi) = \prod_{p<\infty} p^{a(\pi_p)}.\end{equation} We define the finite conductor of $\pi \times \pi$ by \begin{equation}\label{e:qpipidef}q(\pi \times \pi) = \prod_{p<\infty} p^{a(\pi_p \times \pi_p)}.\end{equation}

 The next lemma shows that the conductor is always bounded by a power of the level. While this result is probably known to experts, we were unable to find a suitable reference.
\begin{lem}\label{l:constants}There exist absolute constants $C$ and $D$ such that given a positive integer $N$ and an automorphic representation $\pi$ of $\GSp_4(\A)$ arising from $S_k(\Gamma^{(2)}_0(N))$, we have $q(\pi) \ll N^C$ and $q(\pi \times \pi) \ll N^D.$
\end{lem}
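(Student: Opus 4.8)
The plan is to reduce the statement to a local bound on conductor exponents and then assemble the global bound multiplicatively. For $p\nmid N$ the representation $\pi_p$ will be spherical (see below), so $a(\pi_p)=0$ and hence $a(\pi_p\times\pi_p)=0$; thus $q(\pi)=\prod_{p\mid N}p^{a(\pi_p)}$ and $q(\pi\times\pi)=\prod_{p\mid N}p^{a(\pi_p\times\pi_p)}$. If we produce an absolute constant $C$ with $a(\pi_p)\le C\,v_p(N)$ for every $p$, then $q(\pi)\le\prod_{p\mid N}p^{Cv_p(N)}=N^{C}$. For the Rankin--Selberg conductor no separate argument is needed: the inequality $a(\pi_p\times\pi_p)\le c_2\,a(\pi_p)$ from \eqref{e:condbound} then yields $q(\pi\times\pi)\le N^{c_2 C}$, so one may take $D=c_2 C$.

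To obtain the local input, recall from the discussion preceding Lemma~\ref{l:equivconds} that, since $\pi$ arises from $S_k(\Gamma^{(2)}_0(N))$, there is some $F'\in S_k(\Gamma^{(2)}_0(N))$ whose adelization generates $V_\pi$. As $F'$ is invariant under $\Gamma^{(2)}_0(N)$, its adelization is a nonzero vector of $V_\pi$ fixed by the compact open subgroup $\prod_p K_p$ of the finite part of $\GSp_4(\A)$ attached to $\Gamma^{(2)}_0(N)$, where $K_p=\GSp_4(\Z_p)$ for $p\nmid N$ and, for $p\mid N$, $K_p$ is the Siegel congruence subgroup of level $p^n$ inside $\GSp_4(\Z_p)$ (the subgroup whose lower-left $2\times2$ block is $\equiv0$ modulo $p^n$), $n=v_p(N)$. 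Since $V_\pi\cong\bigotimes'_v V_{\pi_v}$ and spaces of fixed vectors factor accordingly, $V_{\pi_p}$ has a nonzero $K_p$-fixed vector for every $p$; for $p\nmid N$ this is the claimed sphericity, while for $p\mid N$ it says $\pi_p$ has a nonzero vector fixed by the Siegel congruence subgroup of level $p^n$.

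It remains to prove the local statement: there is an absolute constant $C$ such that any irreducible admissible representation $\pi_p$ of $\GSp_4(\Q_p)$ of trivial central character possessing a nonzero vector fixed by the Siegel congruence subgroup of level $p^n$ satisfies $a(\pi_p)\le Cn$. One route is soft. The Siegel congruence subgroup of level $p^n$ contains the principal congruence subgroup $\Gamma(p^n)\subset\GSp_4(\Z_p)$, which is, up to the usual normalization, a member of the Moy--Prasad filtration at the hyperspecial vertex; hence a nonzero $\Gamma(p^n)$-fixed vector forces $\mathrm{depth}(\pi_p)\le n$. Depth is preserved by the local Langlands transfer $\pi_p\rightsquigarrow\Pi_p$ to $\GL_4(\Q_p)$ that defines $a(\pi_p)$, so the ramification breaks of the Weil--Deligne parameter of $\Pi_p$ are all $\le n$; since that parameter has dimension $4$, its Swan conductor is $\le 4n$ and its Artin conductor, which equals $a(\pi_p)$, is bounded by a small absolute multiple of $n$ (one may take $a(\pi_p)\le 4n+8$), so $C=12$ works and $D=12c_2$. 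The alternative, which produces the sharpest explicit constants and is likely preferable, is to run through the Sally--Tadi\'c and Roberts--Schmidt classification of irreducible admissible representations of $\GSp_4(\Q_p)$: for each of the finitely many families $a(\pi_p)$ is an explicit function of the conductors of the inducing characters and $\GL_2$-data, and one checks case by case that a Siegel-congruence-level-$p^n$ fixed vector forces those conductors to be $O(n)$.

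I expect the local statement to be the only real obstacle, and within it the delicate point is controlling the wild ramification. On the soft route one must be certain that enlarging the Siegel congruence subgroup to $\Gamma(p^n)$ still caps the depth and that depth is not raised by the transfer to $\GL_4$; on the explicit route one must organize the Roberts--Schmidt tables carefully, noting that what is tabulated there is the paramodular level rather than the Siegel congruence level, and bounding the conductor directly in terms of the inducing data in each of the eleven cases.
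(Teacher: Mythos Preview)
Your global-to-local reduction is exactly the paper's: both observe that $\pi_p$ is spherical for $p\nmid N$, pass from the Siegel congruence subgroup $K_{0,p}(p^n)$ to the smaller principal congruence subgroup $K_p(p^n)$ (so ``enlarging'' in your last paragraph should read ``shrinking''), and reduce everything to the purely local claim $a(\pi_p)\le Cn$ whenever $\pi_p$ has a nonzero $K_p(p^n)$-fixed vector. The bound on $q(\pi\times\pi)$ via \eqref{e:condbound} is also the same.

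Where you diverge from the paper is in the proof of that local claim, and there is a genuine gap in the supercuspidal case. Your ``soft route'' rests on the assertion that depth is preserved by the local Langlands transfer $\pi_p\rightsquigarrow\Pi_p$ from $\GSp_4(\Q_p)$ to $\GL_4(\Q_p)$. Depth preservation under LLC is known in various tame or good-residue-characteristic settings, but it is not a general theorem and in particular you would need a precise reference covering $\GSp_4$ at \emph{all} primes, including $p=2$; without this the soft route is incomplete. Your ``explicit route'' via the classification is exactly what the paper does for \emph{non}-supercuspidal $\pi_p$: the Iwasawa decomposition forces the inducing characters and $\GL_2$-blocks to have conductor exponent $\le n$ (respectively $\le 2n$), and then Table~A.9 of Roberts--Schmidt expresses $a(\pi_p)$ as a bounded linear combination of those. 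But for supercuspidals there is no inducing data, and the Roberts--Schmidt tables do not give $a(\pi_p)$ in terms of anything you can bound by $n$; your eleven-case plan does not actually reach them.

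The paper handles the supercuspidal case by a different device that you are missing: the \emph{formal degree}. Having a nonzero $K_p(p^n)$-fixed vector immediately gives $\deg(\pi_p)\le p^{Dn}$ for an absolute $D$ (essentially the index of $K_p(p^n)$ in $\GSp_4(\Z_p)$), while the formal degree conjecture---proved for $\GSp_4$ by Gan--Ichino---gives $\deg(\pi_p)\ge p^{E\,a(\pi_p\times\pi_p)}$ for an absolute $E>0$. Combining these with the lower bound in \eqref{e:condbound} yields $a(\pi_p)\ll n$. This sidesteps depth preservation entirely and is the missing ingredient in your proposal.
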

\begin{proof}Let $\pi$ be as in the Lemma. Then $\pi_p$ is unramified for all $p \nmid N$ and hence $a(\pi_p)=0$ for such $p$. Consider a prime $p$ dividing $N$. Then $\pi_p$ has a vector fixed by \begin{equation}\label{k0pdef}K_{0,p}(N):= \GSp_4(\Z_p) \cap \left(\begin{smallmatrix}\Z_p& \Z_p&\Z_p&\Z_p\\\Z_p& \Z_p&\Z_p&\Z_p\\N\Z_p& N\Z_p&\Z_p&\Z_p\\N\Z_p&N \Z_p&\Z_p&\Z_p\\\end{smallmatrix}\right).
\end{equation} Let $K_{p}(N) \subseteq K_{0,p}(N)$ be the principal congruence subgroup defined by $K_p(N) = \{g \in \GSp_4(\Z_p): g\equiv 1_4 \pmod{N}\}$. It is clear that the lemma follows from \eqref{e:condbound} - \eqref{e:qpipidef} and the following purely local statement:
\emph{There exists an absolute constant $C$ such that for a prime $p$, a positive integer $n_p$ and an irreducible admissible representation $\pi_p$ of $\GSp_4(\Q_p)$ that contains a non-zero $K_p(p^{n_p})$-fixed vector, we have $a(\pi_p)\le C n_p$.}

We now give a proof of the local statement. By the well-known classification theorem of irreducible admissible representations, we have the following two cases \begin{itemize} \item $\pi_p$ is obtained as a subquotient of a parabolically induced representation from one of the parabolic subgroups of $\GSp_4$,
\item $\pi_p$ is a supercuspidal representation.
\end{itemize}
We consider the first case. In this case, the induced representation is induced from a collection of characters of $\GL_1(\Q_p)$ and/or admissible representations of $\GL_2(\Q_p)$, depending on the parabolic subgroup (see Chapter 2 of \cite{NF} for more details). Since the induced representation has a vector fixed by $K_{p}(p^{n_p})$, it follows easily using the Iwasawa decomposition that any character occurring in the inducing data must have conductor exponent at most $n_p$ and any representation of $\GL_2(\Q_p)$ occurring in the inducing data must have a vector fixed by $\{g \in \GL_2(\Z_p): g\equiv 1_2 \pmod{p^{n_p}}\}$ and hence must have conductor exponent at most $2n_p$. Now the local statement follows immediately from the direct expression of $a(\pi_p)$ as a sum of the above conductor-exponents (Table A.9 of \cite{NF}).

We now move on to the second case. In this case, using the fact that $\pi_p$ contains a non-zero $K_p(p^{n_p})$-fixed vector, it is immediate that the formal degree ${\rm deg} (\pi_p)$ \cite[section 13]{GI14} of $\pi_p$  satisfies ${\rm deg} (\pi_p)\le p^{D n_p}$ for some absolute positive constant $D$. On the other hand, the formal degree conjecture (which is proved for $\GSp_4$ by the main result of \cite{GI14} implies that ${\rm deg} (\pi_p) \ge p^{E a(\pi_p \times \pi_p)}$ for some absolute positive constant $E$. This combined with \eqref{e:condbound} concludes the proof of the local statement in this case.
\end{proof}

Suppose $F \in S_k(\Gamma^{(2)}_0(N))^{\rm T}$ is a Hecke eigenform at good primes and  $\pi \in \Pi(F)$. We define
the completed spinor $L$-function (see Table 3 of \cite{Sch17})
\begin{equation}\label{spinLfn}
\Lambda(s, \pi) := (2\pi)^{-2s-k+1} \G(s + k - 3/2) \G(s+1/2) L(s, \pi)
\end{equation}
which coincides with the completed $L$-function of an automorphic representation of $\GL_4(\A)$ by the previous subsection and can therefore be meromorphically extended to the whole complex plane and satisfies the functional equation
$$
\Lambda(s, \pi) ~=~ \epsilon(s, \pi) \Lambda(1-s, \pi).
$$

Using \eqref{spinLfn}, we see that the analytic conductor $\mathfrak q(s, \pi)$ of $L(s, \pi)$
(for definition see \cite[p.95]{IK04}) is
$$
\mathfrak q(s, \pi) \ll q(\pi) k^2 (|s| + 3)^4 \ll_N k^2 (|s| + 3)^4.$$
Therefore the convexity bound for $L(s, \pi)$ in the weight/$t$-aspect is
(see \cite[p.101]{IK04})
\begin{equation}\label{convex-Z}
| L(1/2 + \epsilon + it, \pi ) |
~\ll_{N,\epsilon}~
k^{\frac{1}{2}+\epsilon} (3+|t|)^{1+\epsilon}
\phantom{m}\text{ for any }~~ \epsilon > 0.
\end{equation}

 We will also need the completed $L$-function
$$
\Lambda(s, \pi \times \pi) := L_\infty(s, \pi \times \pi) L(s, \pi \times \pi)
$$
 where
$$
L_\infty(s, \pi \times \pi)  := \Big( (\G_\R(s)\G_\R(s+1)\Big)^2
\Big( \G_\C(s+k-1)\G_\C(s+k-2)\Big)^2
\G_\C(s+1) \G_\C(s+2k-3)
$$
(see \cite[p.375]{DKS14} or \cite[Section 5.2]{PSS14}) with
$\G_\C(s) := 2(2\pi)^{-s} \G(s)$ and $\G_\R(s) := \pi^{-s/2} \G(s/2)$. This also satisfies the usual functional equation taking $s \mapsto 1-s$ by the theory of Rankin-Selberg convolutions on $\GL_n$. By the formula for $L_\infty(s, \pi \times \pi)$, we see that the analytic conductor of $L(s, \pi \times \pi)$ is
$$
\mathfrak q(s, \pi \times \pi) \ll q(\pi \times \pi) k^{10} (|s| + 3)^{16} \ll_N k^{10} (|s| + 3)^{16}.
$$
Hence the convexity bound (see \cite[p.101]{IK04}) for
$L(s, \pi \times \pi)$ in the weight/$t$ aspect is
\begin{equation}\label{convex-R}
| L(1/2 + \epsilon + it, \pi \times \pi) |
~\ll_{N,\epsilon}~
k^{\frac{5}{2}+\epsilon} (3+|t|)^{4+\epsilon}
\phantom{m}\text{ for any }~~ \epsilon > 0.
\end{equation}

\section{Sign changes of Hecke eigenvalues over primes}
In this section we will prove Theorems \ref{First-sign} and \ref{split-inert}.

\subsection{First sign change of Hecke eigenvalues of non-CAP forms}
In this subsection, using a technique of Jin \cite[Theorem 3.1]{SJ19}
(also see Cho and Kim \cite{CK21}) we prove \thmref{First-sign} for
the non-CAP forms.
\begin{prop}\label{non-CAP}
Let $N\ge 1$ be an integer, and let $F \in S_k(\G_0^{(2)}(N))^{ \rm T}$ be a Hecke eigenform
at all good primes with normalized Hecke eigenvalues $\lambda(m)$
for $\gcd(m,N)=1$. Assume that $F$ satisfies the Ramanujan conjecture.\footnote{This is automatic if $k\ge 3$ or $F$ is a Yoshida lift.} Then there exists a prime $p$ with $\gcd(p, N)=1$
and
$$
p \ll_{N, \epsilon} k^{5+\epsilon}
$$
such that $\lambda(p) < 0$.
\end{prop}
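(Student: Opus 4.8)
The plan is to exploit the analytic properties of the partial spinor $L$-function $L^N(s,\pi)$ for $\pi \in \Pi(F)$ together with a hyperbola/contour argument of the kind used by Jin. First I would recall from \eqref{Spinor} that
\[
L^N(s, \pi) = \zeta^N(2s+1) \sum_{\substack{m=1\\(m,N)=1}}^\infty \frac{\lambda(m)}{m^s},
\]
so that the Dirichlet coefficients $a_\pi(m)$ of $L^N(s,\pi)$ satisfy $a_\pi(m) = \sum_{d^2 e = m,\ (d,N)=1} \lambda(e)$ (up to the twist by $\zeta^N(2s+1)$). Suppose, for contradiction, that $\lambda(p) \ge 0$ for all primes $p \le x$ with $\gcd(p,N)=1$, where $x$ is a parameter to be chosen of size roughly $k^{5+\epsilon}$. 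Since $F$ is of general or Yoshida type, $L(s,\pi)$ is entire (Proposition~\ref{p:lfunctionsfacts}), and by Arthur's work $L^N(s,\pi) = L^N(s,\Pi)$ for an isobaric $\Pi$ on $\GL_4$; thus we have a Dirichlet series with nonnegative local data at small primes and we can play the coefficients at primes below $x$ against the behaviour of the completed $L$-function.

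The key steps, in order, would be: (i) write down a smoothed sum $\sum_m a_\pi(m) V(m/x)$ for a suitable nonnegative weight $V$, or alternatively Jin's device of considering $-\frac{L'}{L}(s,\pi)$ or a ratio such as $L(s,\pi)L(s,\pi\times\pi)$-type combination, and express it via a contour integral $\frac{1}{2\pi i}\int_{(\sigma)} L^N(s+w,\pi)\,\tilde{V}(w)\,x^w\,dw$; (ii) move the contour to the left past $s = 1/2$, picking up the contribution of any pole of $\zeta^N(2s+1)$ (there is none in the relevant range since $2s+1 = 1$ corresponds to $s=0$) and using that $L^N(s,\pi)$ is holomorphic — here is where being of type T is essential; (iii) on the shifted line use the convexity bound \eqref{convex-Z}, namely $|L(1/2+\epsilon+it,\pi)| \ll_{N,\epsilon} k^{1/2+\epsilon}(3+|t|)^{1+\epsilon}$, together with the rapid decay of $\tilde V$, to bound the shifted integral by something like $k^{1/2+\epsilon} x^{1/2+\epsilon}$; (iv) on the main term side, isolate the contribution of the primes $p \le x$ and of the prime squares and use the Ramanujan bound $|\lambda(m)| \le d_5(m)$ to control the tail and the non-prime terms; (v) if all $\lambda(p) \ge 0$ for $p \le x$, derive a lower bound on the main term (e.g.\ from the $m=1$ term plus nonnegativity) that is of size $\gg x^{1-\epsilon}$ or at least $\gg x / \log x$, and compare with the upper bound $\ll k^{1/2+\epsilon} x^{1/2+\epsilon}$; this forces $x \ll k^{1+\epsilon}$, contradicting $x \asymp k^{5+\epsilon}$. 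Choosing the exponent carefully — Jin-type arguments typically lose a factor because one actually controls $\sum_{p\le x}\lambda(p)\log p$ via the logarithmic derivative, and the analytic conductor of $L(s,\pi\times\pi)$ (of size $k^{10}$, bound \eqref{convex-R}) enters when one needs a positivity input from the Rankin--Selberg square — gives the stated exponent $5$.

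More precisely, I expect the cleanest route follows Jin's Theorem~3.1 directly: consider the two Dirichlet series $L^N(s,\pi)$ and $L^N(s,\pi\times\pi)$, use that the latter has a simple pole at $s=1$ (general type) or a pole of order $2$ (Yoshida type) — in either case a pole with positive residue — to get, via a Tauberian/contour argument, that $\sum_{p \le x} \lambda(p)^2 \log p \gg x$ provided $x \gg \mathfrak{q}(\pi\times\pi)^{1/2+\epsilon} \ll_N k^{5+\epsilon}$ (using the convexity exponent $k^{5/2}$ halved appropriately against the length of the sum, giving $k^5$). Combined with the Ramanujan bound $|\lambda(p)| \le 5$, nonnegativity of $\lambda(p)$ on $[2,x]$ would give $\sum_{p\le x}\lambda(p)\log p \ge \tfrac15 \sum_{p\le x}\lambda(p)^2\log p \gg x$; but the analytic properties of $L^N(s,\pi)$ itself (entire, convexity bound $k^{1/2+\epsilon}$) give $\sum_{p \le x}\lambda(p)\log p \ll k^{1/2+\epsilon} x^{1/2+\epsilon} + (\text{lower order})$, which for $x$ a suitable power of $k$ is $o(x)$ — contradiction.

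The main obstacle I anticipate is bookkeeping the contribution of the ramified primes $p \mid N$ and the prime-square terms, and more substantively, getting the precise exponent $5$ rather than something larger: one must use the sharp shape of the archimedean factor in \eqref{spinLfn} (the $\Gamma$-factor $\Gamma(s+k-3/2)\Gamma(s+1/2)$, giving analytic conductor $\asymp k^2$ for the degree-$4$ $L$-function, and $\asymp k^{10}$ for the degree-$16$ Rankin--Selberg square), and feed this into the convexity bounds \eqref{convex-Z} and \eqref{convex-R} optimally. The balancing of the length $x$ of the prime sum against the conductor of $L(s,\pi\times\pi)$ is exactly what produces $k^{5+\epsilon}$: one needs $x$ slightly larger than $\mathfrak q(\pi\times\pi)^{1/2} \asymp_N k^5$. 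A secondary subtlety is that for $k=2$ of general type the Ramanujan bound is assumed (as in the hypothesis), while for $k=2$ Yoshida type it follows from Deligne, so the argument goes through uniformly under the stated hypotheses; the polynomial-in-$N$ dependence is then tracked through Lemma~\ref{l:constants}.
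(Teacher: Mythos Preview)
Your overall strategy is exactly the paper's: play the entire spinor $L$-function $L(s,\pi)$ (with convexity exponent $k^{1/2+\epsilon}$) against the Rankin--Selberg square $L(s,\pi\times\pi)$ (pole at $s=1$, convexity exponent $k^{5/2+\epsilon}$), and observe that the balance point is $x\asymp k^{5+\epsilon}$. You have also correctly identified where the exponent $5$ comes from.

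Where your write-up has a genuine gap is the passage from these analytic facts to \emph{prime} sums. You assert that entirety and convexity of $L(s,\pi)$ give $\sum_{p\le x}\lambda(p)\log p \ll k^{1/2+\epsilon}x^{1/2+\epsilon}$, but to reach prime sums weighted by $\log p$ one must integrate $-L'/L(s,\pi)$, and that function has poles at every zero of $L(s,\pi)$ in the strip; a bare convexity bound on $L$ gives you nothing here without a zero-free region. Likewise, your alternative in step~(v) --- deduce nonnegativity of the full Dirichlet sum from $\lambda(p)\ge 0$ --- fails because multiplicativity only tells you $\lambda(m)\ge 0$ for \emph{squarefree} $m$; the coefficients at prime powers (or, equivalently, the Dirichlet coefficients of $L^N(s,\pi)$ itself) can be negative even when all $\lambda(p)\ge 0$.

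The paper's fix (following Jin) is precisely to work with squarefree-restricted sums. Define
\[
L^{N,b}(s,\pi)=\prod_{p\nmid N}\Big(1+\frac{\lambda(p)}{p^s}\Big),\qquad
L^{N,b}(s,\pi\times\pi)=\prod_{p\nmid N}\Big(1+\frac{\lambda(p)^2}{p^s}\Big).
\]
Under Ramanujan the ratios $L^{N,b}(s,\pi)/L(s,\pi)$ and $L^{N,b}(s,\pi\times\pi)/L(s,\pi\times\pi)$ are given by Euler products absolutely convergent (hence holomorphic and $\asymp_N 1$) in $\Re(s)>1/2$. One can therefore shift the contour in the smoothed sums $\sum_{m\ \mathrm{sqfree}}\lambda(m)w(m/X)$ and $\sum_{m\ \mathrm{sqfree}}\lambda(m)^2 w(m/X)$ directly to $\Re(s)=1/2+\epsilon$, picking up the pole(s) of $L(s,\pi\times\pi)$ at $s=1$ for the second sum. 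This gives
\[
\sum_{m\ \mathrm{sqfree}}\lambda(m)w(m/X)\ll_{N,\epsilon}(Xk)^{1/2+\epsilon},\qquad
\sum_{m\ \mathrm{sqfree}}\lambda(m)^2 w(m/X)= cX + O_{N,\epsilon}\big((Xk^5)^{1/2+\epsilon}\big),
\]
and now the nonnegativity step is legitimate: if $\lambda(p)\ge 0$ for all $p\le 2X$ coprime to $N$, then $\lambda(m)\ge 0$ for all such squarefree $m$, and one compares the two sums via $\lambda(m)^2\le 4\lambda(m)$ (Ramanujan gives $|\lambda(p)|\le 4$, not $5$). One additional point you do not mention but which is needed: the residue $c$ at $s=1$ must be bounded below by $k^{-\epsilon}$; the paper handles the general and Yoshida cases separately here, invoking lower bounds for $L$-values at $s=1$ from the literature.
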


\begin{proof}
Fix some $\pi \in \Pi(F)$. For $\Re(s) > 1$, let us define
$$
L^{N,b}(s, \pi)
~:=
\sum_{m \ge 1, \gcd(m, N)=1 \atop m \text{ sq-free}} \frac{\lambda(m)}{m^s}
~=~
\prod_{(p, N) = 1} \left( 1 + \frac{\lambda(p)}{p^s} \right)
$$
and
$$
L^{N,b}(s, \pi\times\pi)
~:=~
\sum_{m \ge 1, \gcd(m, N)=1 \atop m \text{ sq-free}} \frac{\lambda(m)^2}{m^s}
~=~
\prod_{(p, N)=1} \left( 1 + \frac{\lambda(p)^2}{p^s} \right).
$$
Note that the Euler product is valid as $\lambda(m)$ is multiplicative.
For $\Re(s) > 1$, using \eqref{Spinor-D}, \eqref{Spinor} and \eqref{Rankin}, we have
\begin{eqnarray*}
\frac{L^{N,b}(s, \pi)}{L(s, \pi)}
& = &
\prod_{p~|~ N} H_p(p^{-s})
\times
\prod_{(p, N)=1} \left(\left( 1 + \frac{\lambda(p)}{p^s}\right)
\prod_{1 \le i \le 4} \left( 1 - \frac{\alpha_{p,i}}{p^s} \right)\right)
\\
& := &
\prod_{p~|~ N} H_p(p^{-s})
\times
\prod_{(p, N)=1}\left( 1 + \frac{a_2}{p^{2s}} + \cdots +\frac{a_5}{p^{5s}} \right),
\end{eqnarray*}
where $H_p(X)$ is a polynomial of degree $\le 4$ with absolutely bounded coefficients and
\begin{eqnarray}\label{RS-sq-free}
\frac{L^{N,b}(s, \pi\times\pi)}{L(s, \pi\times\pi)}
&=&
\prod_{p~|~N} H_p^{(1)}(p^{-s})
\times
\prod_{(p,N)=1}\left( 1 + \frac{\lambda(p)^2}{p^s}\right)
\prod_{1 \le i, j \le 4} \left( 1 - \frac{\alpha_{p,i}\alpha_{p,j}}{p^s} \right)\\
& :=&
\prod_{p~|~N} H_p^{(1)}(p^{-s})
\times
\prod_{(p, N)=1} \left( 1 + \frac{b_2}{p^{2s}} + \cdots +\frac{b_{17}}{p^{17s}} \right),
\nonumber
\end{eqnarray}
where $a_i, b_j$ for $i = 2, \dots, 5; j = 2, \dots, 17$ are complex
numbers which are bounded by an absolute constant and $H_p^{(1)}(X)$
is a polynomial of degree $\le 16$ with absolutely bounded coefficients.
Hence the Euler products expressing $\frac{L^{N,b}(s, \pi)}{L(s,\pi)}$
and $\frac{L^{N,b}(s, \pi\times\pi)}{L(s, \pi\times\pi)}$ are absolutely and
uniformly convergent in any compact set in the region $\Re(s) > 1/2$.
Therefore, both $\frac{L^{N,b}(s, \pi)}{L(s,\pi)}$ and
$\frac{L^{N,b}(s, \pi\times\pi)}{L(s, \pi\times\pi)}$
define holomorphic functions in $\Re(s) > 1/2$.

Now, we want to show $|\frac{L^{N,b}(s, \pi\times\pi)}{L(s, \pi\times\pi)}| \asymp_N 1$
at $s=1$. First note that, by Weissauer's bound, none of the Euler
factors of $\frac{L^{N,b}(s, \pi\times\pi)}{L(s, \pi\times\pi)}$
vanishes at $s=1$ any prime $p$. Hence
$\frac{L^{N,b}(s, \pi\times\pi)}{L(s, \pi\times\pi)} \ne 0$ at $s=1$.
Let $C := \text{max}_{2\le i \le 17}|b_i|$. Then $C$ is an absolute constant.
Hence one can easily see from \eqref{RS-sq-free} that
$$
\left| \frac{L^{N,b}(1, \pi\times\pi)}{L(1, \pi\times\pi)}\right| \ll_N 1.
$$
We also have
$$
\left| \frac{L^{N,b}(1, \pi\times\pi)}{L(1, \pi\times\pi)} \right|
\ge
\prod_{p~|~N} |H_p^{(1)}(p^{-1})|
\times
\prod_{(p,N)=1 \atop p\le 16C}\left( 1 - \frac{1}{p} \right)^{16}
\times
\prod_{(p, N)=1 \atop p>16C} \left( 1 - \frac{16C}{p^2}\right).
$$
Therefore
$|\frac{L^{N,b}(1, \pi\times\pi)}{L(1, \pi\times\pi)}| \asymp_N 1$.

Let $w$ be a smooth function on $[0, \infty)$ with support in $[0,2]$ and
$w \equiv 1$ on $[0,1]$ and $0 \le w \le 1$ on $[1, 2]$.
Also let $\widehat{w}(s) := \int_0^\infty w(x)x^{s-1}dx$ be the Mellin transform.
Then, for any real number $X > 1$, using Perron type formula \cite[Section 2.2]{SJ19}
we have
$$
\sum_{m \ge 1, \gcd(m, N)=1 \atop m \text{ sq-free}} \lambda(m)w\left(\frac{m}{X}\right)
~=~
\frac{1}{2\pi i} \int_{1 + \epsilon - i\infty}^{1 + \epsilon + i\infty}
\widehat{w}(s)L^{N,b}(s, \pi) x^s ds
$$
and
$$
\sum_{m \ge 1, \gcd(m, N)=1 \atop m \text{ sq-free}} \lambda(m)^2 w\left(\frac{m}{X}\right)
~=~
\frac{1}{2\pi i} \int_{1 + \epsilon - i\infty}^{1 + \epsilon + i\infty}
\widehat{w}(s) L^{N,b}(s, \pi\times\pi) x^s ds.
$$
Since both $\frac{L^{N,b}(s, \pi)}{L(s,\pi)}$ and $L(s, \pi)$ are holomorphic
in the region $\Re(s) > 1/2$, the function $L^{N,b}(s, \pi)$ has holomorphic
continuation to $\Re(s) > 1/2$. The function $\widehat{w}(s)$ is of rapid decay
as $\Im(s) \to \infty$ and $L(s, \pi)$ is holomorphic in $\Re(s) > 1/2$ implies that
$$
\sum_{m \ge 1, \gcd(m, N)=1 \atop m \text{ sq-free}} \lambda(m)w\left(\frac{m}{X}\right)
~=~
\frac{1}{2\pi i} \int_{1/2 + \epsilon - i\infty}^{1/2 + \epsilon + i\infty}
\widehat{w}(s)L^{N,b}(s, \pi) x^s ds.
$$
The fact $|\frac{L^{N,b}(s, \pi)}{L(s,\pi)}| \ll_{N,\epsilon}~ 1$ in
$\Re(s) ~\ge~ 1/2+\epsilon$ along with the convexity bound \eqref{convex-Z} for
$L(s, \pi)$ gives
\begin{equation}\label{bound-Z}
\sum_{m \ge 1, (m,N)=1 \atop m \text{ sq-free}} \lambda(m)w\left(\frac{m}{X}\right)
~\ll_{N, \epsilon}~
(Xk)^{\frac{1}{2}+\epsilon}.
\end{equation}
Since the poles of $L(s, \pi\times\pi)$ are only at $s = 1$, using convexity bound
\eqref{convex-R} and arguing as above, one can derive
\begin{equation}\label{bound-R}
\sum_{m \ge 1, (m,N)=1 \atop m \text{ sq-free}} \lambda(m)^2 w\left(\frac{m}{X}\right)
~=~
\text{Res}_{s =1} \left( L^{N,b}(s, \pi\times\pi) \right)\widehat{w(1)} X
~+~
O\left( (Xk^5)^{\frac{1}{2}+ \epsilon} \right).
\end{equation}
If $F$ is general type, $L(s, \pi\times\pi)$ has a simple pole
at $s=1$ and
$\text{Res}_{s =1} \left( L^{N,b}(s, \pi\times\pi) \right) ~\gg_N~ k^{-\epsilon}$
for any $\epsilon > 0$ (see \cite[p.928]{CK21}). When $F$ is of Yoshida
type then $L(s, \pi\times\pi)$ has a double pole at $s=1$ with residue
$$
\text{Res}_{s =1} \left( L^{N,b}(s, \pi\times\pi) \right)
=
2c\gamma \cdot L(1, \pi_1\times\pi_2)^2L(1, {\rm sym}^2\pi_1)L(1, {\rm sym}^2\pi_2),
$$
where $\gamma$ is the Euler constant and $c$ is a constant which
depends on $N$ but not on $k$. Hence, using the results from \cite{Li10},
one can easily see
$\text{Res}_{s =1} \left( L^{N,b}(s, \pi\times\pi) \right) \gg_N~ k^{-\epsilon}$
for any $\epsilon > 0$. Therefore, in both the cases,
$
\text{Res}_{s =1} \left( L^{N,b}(s, \pi\times\pi) \right)\widehat{w(1)} X
\gg_N
(Xk^5)^{\frac{1}{2}+ \epsilon}
$
when $X \gg_N k^{5+\epsilon}$.
Let us assume that $\lambda(m) \ge 0$ for all square-free $m \le 2X$
with $\gcd(m,N)=1$. When $X \gg_N k^{5+\epsilon}$,
Weissauer's bound and estimates \eqref{bound-Z} and \eqref{bound-R} imply
$$
X
~\ll_N~
\sum_{m \ge 1 \atop m \text{ sq-free}} \lambda(m)^2 w\left(\frac{m}{X}\right)
~\ll_N~
X^{\delta} \sum_{m \ge 1 \atop m \text{ sq-free}} \lambda(m)w\left(\frac{m}{X}\right)
~\ll_{N, \epsilon}~
X^\delta (Xk)^{\frac{1}{2}+\epsilon}
$$
for any $\delta > 0$. This is a contradiction to the assumption $X \gg_{N,\epsilon} k^{5+\epsilon}$.
Since $\lambda(m)$ is multiplicative, hence there exists a prime
$p$ with $\gcd(p, N)=1$ and
$$
p ~\ll_{N,\epsilon}~ k^{5+\epsilon}
$$
such that $\lambda(p) < 0$.
\end{proof}

\subsection{Sign changes of Hecke eigenvalues of non-CAP forms over arithmetic progressions}
We begin by recalling the following well-known result, which is a known special case of the Selberg orthogonality conjecture.
\begin{prop}\label{KLW13}Let $\pi$ and $\pi'$ be irreducible unitary cuspidal automorphic representations of $\GL_m(\A)$ and $\GL_{m'}(\A)$ where $m$ and $m'$ are both less than or equal to 4. Assume that at least one of $\pi$ and $\pi'$ is self dual. Let $N$ be any positive integer. Then \[\sum_{p\le X, \ \gcd(p,N)=1} \frac{\lambda_\pi(p)\overline{\lambda_{\pi'}(p)}}{p} = \begin{cases} \log \log X +O_{\pi, \pi', N}(1)&\text{if } \pi' \simeq \pi, \\ O_{\pi, \pi', N}(1) & \text{ otherwise.} \end{cases} \]
\end{prop}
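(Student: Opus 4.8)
The plan is to deduce the stated asymptotic from the standard Selberg orthogonality relation for Rankin--Selberg $L$-functions on $\GL_n$. Recall that for irreducible unitary cuspidal automorphic representations $\pi$ of $\GL_m(\A)$ and $\pi'$ of $\GL_{m'}(\A)$ with $m, m' \le 4$, the Rankin--Selberg $L$-function $L(s, \pi \times \widetilde{\pi'})$ is entire when $\pi' \not\simeq \pi$ and has a simple pole at $s=1$ when $\pi' \simeq \pi$ (here the self-duality hypothesis on one of $\pi, \pi'$ is only needed to guarantee, via the combination of Shahidi's non-vanishing results and the known cases of Rankin--Selberg theory, that $\log L(s, \pi \times \widetilde{\pi'})$ behaves well on the line $\Re(s)=1$; in the stated range $m, m' \le 4$ this is all known unconditionally). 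Taking the logarithmic derivative and applying a Tauberian argument (e.g. the explicit formula, or a contour shift together with the zero-free region) yields $\sum_{p} \frac{\lambda_\pi(p)\overline{\lambda_{\pi'}(p)}}{p^s} = -\delta_{\pi \simeq \pi'}\log(s-1) + (\text{holomorphic at } s=1)$, where the prime-power terms $p^{\nu}$, $\nu \ge 2$, contribute a convergent series; letting $s \to 1^{+}$ and invoking the Wiener--Ikehara / Mertens-type partial-summation argument gives $\sum_{p \le X} \frac{\lambda_\pi(p)\overline{\lambda_{\pi'}(p)}}{p} = \delta_{\pi \simeq \pi'}\log\log X + O_{\pi,\pi'}(1)$. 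This is precisely the content of \cite{KLW13} (and, in the relevant generality, of Liu--Ye and Wu); I would simply cite that reference for the version with $N=1$.

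The only new feature here is the restriction of the sum to primes coprime to a fixed modulus $N$. First I would observe that removing finitely many primes changes the sum by a bounded amount: indeed $\sum_{p \mid N} \frac{\lambda_\pi(p)\overline{\lambda_{\pi'}(p)}}{p}$ is a finite sum, and each term is $O_{\pi,\pi'}(1)$ since the local Hecke eigenvalues $\lambda_\pi(p)$ are bounded in terms of $\pi$ (by the local components being unitary generic or, at worst, by the trivial bound $|\lambda_\pi(p)| \le m p^{(m-1)/2}$ which is still $O_\pi(1)$ for each fixed $p$). Hence
\begin{equation*}
\sum_{p \le X,\ \gcd(p,N)=1} \frac{\lambda_\pi(p)\overline{\lambda_{\pi'}(p)}}{p}
= \sum_{p \le X} \frac{\lambda_\pi(p)\overline{\lambda_{\pi'}(p)}}{p} - \sum_{p \mid N} \frac{\lambda_\pi(p)\overline{\lambda_{\pi'}(p)}}{p},
\end{equation*}
and the second sum on the right is absorbed into the $O_{\pi,\pi',N}(1)$ error term. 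Applying the $N=1$ statement to the first sum finishes the proof.

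I do not anticipate a genuine obstacle: the substance of the result is the classical Selberg orthogonality input, which lies squarely in the cited literature for degrees up to $4$ (where the required analytic properties of Rankin--Selberg $L$-functions---meromorphic continuation, location of poles, non-vanishing on $\Re(s)=1$---are all available), and the passage to primes coprime to $N$ is a triviality as explained above. If one wanted to be careful about uniformity, one would note that the implied constant genuinely does depend on $\pi$, $\pi'$, and $N$, which is all that is claimed. The one point deserving a sentence of care is making sure the self-duality hypothesis is invoked correctly: it is used to ensure $L(s,\pi\times\widetilde{\pi'})$ is a ``nice'' $L$-function (self-dual, hence with real Dirichlet coefficients after symmetrization) so that the standard non-negativity/Tauberian machinery applies; but in our intended application both representations will in fact be the spin transfers of Siegel eigenforms and hence self-dual, so this costs nothing.
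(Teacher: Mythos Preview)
Your proposal is correct and matches the paper's approach: the paper simply cites the literature (Rudnick--Sarnak and Liu--Wang--Ye) for the Selberg orthogonality statement and then remarks, exactly as you do, that since only finitely many primes divide $N$ the coprimality condition affects the sum by $O_{\pi,\pi',N}(1)$. Your write-up is more expansive than the paper's one-line citation, but the underlying argument is identical; one small quibble is that your description of the role of self-duality is slightly off---it is not about making the $L$-function ``nice'' per se, but rather that the full Selberg orthogonality is only known unconditionally in this range when at least one factor is self-dual (this is precisely what the cited works establish).
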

\begin{proof}See Section 2 of \cite{RS96}  or \cite[Corollary 1.5]{LWY05}. Note here that since only finitely many primes divide $N$, the condition $\gcd(p, N)=1$ does not change the asymptotic behaviour of the sums.
\end{proof}

As a corollary of the above we obtain
\begin{cor}\label{cor:selberg}Let $\pi$ be an irreducible unitary cuspidal automorphic self-dual representation of $\GL_m(\A)$ with $m\le 4$ and assume that $\pi$ is not the trivial representation of $\GL_1(\A)$.  Let $a, M$ be positive
integers such that $\gcd(aq(\pi), M)=1$ where $q(\pi)$ is the finite conductor of $\pi$. Let $N$ be any positive integer. Then, for sufficiently large $X$, we have $$
\sum_{p \le X, (p, N)=1 \atop p \equiv a \mod M} \frac{\lambda_\pi(p)}{p}
~=~ O_{\pi, M, N}(1)
\phantom{m}
$$
and
$$
\sum_{p \le X, (p, N)=1 \atop p \equiv a \mod M} \frac{\lambda_\pi(p)^2}{p}
~=~
\frac{1}{\varphi(M)} \log\log X ~+~ O_{\pi, M, N}(1)
$$
\end{cor}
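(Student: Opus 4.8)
The plan is to detect the progression $p\equiv a\pmod M$ with Dirichlet characters and then feed everything into Proposition~\ref{KLW13}. Since $\gcd(a,M)=1$, any prime $p\equiv a\pmod M$ is automatically coprime to $M$, and the orthogonality relation $\mathbf 1_{p\equiv a\,(M)}=\frac1{\varphi(M)}\sum_{\chi\bmod M}\overline{\chi(a)}\chi(p)$ holds for all $p\nmid M$. Inserting this into the two sums in the statement reduces the problem to understanding, for each Dirichlet character $\chi$ modulo $M$, the sums $\sum_{p\le X,\,(p,N)=1}\frac{\lambda_\pi(p)\chi(p)}{p}$ and $\sum_{p\le X,\,(p,N)=1}\frac{\lambda_\pi(p)^2\chi(p)}{p}$.

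Next I would absorb $\chi$ into a twist. Viewing $\chi$ as a Hecke character via class field theory, the twist $\pi\otimes\chi$ is again an irreducible unitary cuspidal automorphic representation of $\GL_m(\A)$ with $m\le 4$, and $\lambda_{\pi\otimes\chi}(p)=\chi(p)\lambda_\pi(p)$ for every prime $p\nmid M\,q(\pi)$; the finitely many excluded primes change the sums over $1/p$ by only $O_{\pi,M}(1)$. Because $\pi$ is self-dual and unitary, its Satake parameters at unramified primes are closed under complex conjugation, so $\lambda_\pi(p)\in\R$ and hence $\lambda_\pi(p)^2\chi(p)=\lambda_{\pi\otimes\chi}(p)\,\overline{\lambda_\pi(p)}$. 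I would then apply Proposition~\ref{KLW13} twice: to the pair $(\pi\otimes\chi,\ \mathbf 1)$, with $\mathbf 1$ the trivial representation of $\GL_1(\A)$ (self-dual, so the hypothesis is met), for the first sum; and to the pair $(\pi\otimes\chi,\ \pi)$, with $\pi$ self-dual, for the second. Each application yields a main term $\log\log X$ exactly when $\pi\otimes\chi\simeq\mathbf 1$, respectively $\pi\otimes\chi\simeq\pi$, and is $O_{\pi,\chi,N}(1)$ otherwise.

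It then remains to decide, for each $\chi\bmod M$, whether the main term occurs, and this is where the hypothesis $\gcd(a\,q(\pi),M)=1$ is used. For the first sum, $\pi\otimes\chi\simeq\mathbf 1$ forces $m=1$ and $\pi\simeq\chi^{-1}$; but then $\pi$ would be a nontrivial Dirichlet character of conductor dividing $M$, incompatible with $\gcd(q(\pi),M)=1$ and $q(\pi)>1$ (for $m\ge2$ the isomorphism is impossible for trivial reasons). Hence the first sum is $O_{\pi,M,N}(1)$, and after summing against $\overline{\chi(a)}/\varphi(M)$ it stays $O_{\pi,M,N}(1)$, proving the first assertion. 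For the second sum, if $\pi\otimes\chi\simeq\pi$ then $\pi_\ell\otimes\chi_\ell\simeq\pi_\ell$ at every prime $\ell$; for $\ell\mid M$ the local representation $\pi_\ell$ is unramified (since $\ell\nmid q(\pi)$), and twisting an unramified representation of $\GL_m(\Q_\ell)$ by a ramified character raises the conductor exponent from $0$ to $m\cdot a(\chi_\ell)\ge 1>0$, a contradiction. Thus $\chi$ must be the principal character $\chi_0$ modulo $M$; for $\chi_0$ one has $\lambda_\pi(p)^2\chi_0(p)=\lambda_\pi(p)^2$ for $p\nmid M$, so Proposition~\ref{KLW13} applied to $(\pi,\pi)$ gives $\log\log X+O_{\pi,M,N}(1)$, while every other $\chi$ contributes $O_{\pi,M,N}(1)$. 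Combining with the weights and using $\chi_0(a)=1$ produces $\frac1{\varphi(M)}\log\log X+O_{\pi,M,N}(1)$, as required.

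The bulk of this is routine character bookkeeping; the only real content — and the step I expect to need the most care — is the conductor argument identifying which $\chi$ produce the main term, which rests on the standard fact that twisting by a ramified character strictly increases the conductor at that prime, together with the coprimality hypothesis $\gcd(q(\pi),M)=1$.
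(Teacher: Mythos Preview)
Your argument is correct and follows essentially the same route as the paper: detect the progression with Dirichlet characters, rewrite the resulting sums as Rankin--Selberg pairings $(\pi\otimes\chi,\mathbf 1)$ and $(\pi\otimes\chi,\pi)$, and invoke Proposition~\ref{KLW13}, using the coprimality of $q(\pi)$ and $M$ to rule out unwanted isomorphisms via a conductor comparison. Your treatment is in fact a bit more explicit than the paper's (e.g.\ spelling out that $\lambda_\pi(p)\in\R$ and handling the $m=1$ case of the first sum separately), but the ideas are identical.
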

\begin{proof}
For $\gcd(a, M) = 1$, by orthogonality of characters, we have
$$
\sum_{\chi \mod M} \chi(a)^{-1}\chi(p)
~=~
\begin{cases}
\varphi(M) & \text{if }~~ p\equiv a \mod M\\
0 & \text{otherwise}.
\end{cases}
$$
Therefore, we have
$$
\sum_{p \le X, (p, N)=1 \atop p \equiv a \mod M} \frac{\lambda_\pi(p)}{p}
~=~
\frac{1}{\varphi(M)}\sum_{\chi \mod M} \chi(a)^{-1}
\sum_{p \le X \atop (p, NM)=1} \frac{\lambda_{\pi \times \chi}(p)}{p}.
$$
We apply Proposition \ref{KLW13} to the inner sum, which gives us the first part of the proposition.

For the second part, observe that $\pi\not\simeq\pi \times \chi$ unless $\chi$ is trivial. This follows from the fact that the conductors are different: since there exists a prime $p$ dividing the conductor of $\chi$ but not dividing $q(\pi)$, we see that the $p$-part of the conductor of $\pi$ is trivial but the  $p$-part of the conductor of $\pi \times \chi$ is non-trivial. As
$$
\sum_{p \le X, (p, N)=1 \atop p \equiv a \mod M} \frac{\lambda_\pi(p)^2}{p}
~=~
\frac{1}{\varphi(M)}\sum_{\chi \mod M} \chi(a)^{-1}
\sum_{p \le X \atop (p, NM)=1} \frac{\lambda_\pi(p)\lambda_{\pi\times \chi}(p)}{p},
$$
we apply Proposition \ref{KLW13} to the inner sum to complete the proof.
\end{proof}

\begin{lem}\label{inert-prop}
Let $F \in S_k(\G_0^{(2)}(N))^{ \rm T}$ be a Hecke eigenform
at all good primes with normalized Hecke eigenvalues
$\lambda(m)$ for $\gcd(m,N)=1$. Also let $a, M$ be positive
integers such that $\gcd(aN, M) = 1$. Then, for sufficiently large $X$, we have
$$
\sum_{p \le X, (p, N)=1 \atop p \equiv a \mod M} \frac{\lambda(p)}{p}
~=~ O_{F, M}(1)
\phantom{m}
$$
and
$$
\sum_{p \le X, (p, N)=1 \atop p \equiv a \mod M} \frac{\lambda(p)^2}{p}
~=~
\begin{cases}
\frac{1}{\varphi(M)} \log\log X ~+~ O_{F, M}(1) & \text{ if } F \text{ is of general type};\\
\frac{2}{\varphi(M)} \log\log X ~+~ O_{F, M}(1) & \text{ if } F \text{ is of Yoshida type}.
\end{cases}
$$
\end{lem}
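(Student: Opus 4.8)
The plan is to deduce this lemma directly from \corref{cor:selberg}, combined with the factorizations of the relevant $L$-functions recorded in \propref{p:lfunctionsfacts}. Since $F \in S_k(\G_0^{(2)}(N))^{\rm T}$ is a Hecke eigenform at good primes, it is either of general type or of Yoshida type, and I would treat these two cases separately. In both cases one first notes the observation that makes \corref{cor:selberg} applicable: every cuspidal automorphic representation $\sigma$ of a general linear group appearing below is unramified at all primes not dividing $N$ by \propref{p:lfunctionsfacts}, so $q(\sigma)$ is divisible only by primes dividing $N$; since $\gcd(aN,M)=1$, this yields $\gcd(a\,q(\sigma),M)=1$, and moreover no such $\sigma$ is the trivial representation of $\GL_1(\A)$.

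If $F$ is of general type, \propref{p:lfunctionsfacts} gives a self-dual cuspidal automorphic representation $\Pi$ of $\GL_4(\A)$ with $\lambda(p)=\lambda_\Pi(p)$ for all $p\nmid N$. Applying the two parts of \corref{cor:selberg} with $\pi=\Pi$ then yields $\sum_{p\le X,\,(p,N)=1,\,p\equiv a\mod M}\lambda(p)/p = O_{F,M}(1)$ and $\sum_{p\le X,\,(p,N)=1,\,p\equiv a\mod M}\lambda(p)^2/p = \varphi(M)^{-1}\log\log X + O_{F,M}(1)$, as claimed (the dependence of the error on $\Pi$ and $N$ being a dependence on $F$).

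If $F$ is of Yoshida type, \propref{p:lfunctionsfacts} provides non-isomorphic self-dual cuspidal representations $\pi_1,\pi_2$ of $\GL_2(\A)$ (attached to holomorphic newforms of weights $2$ and $2k-2$, hence with real Hecke eigenvalues) such that $\lambda(p)=\lambda_1(p)+\lambda_2(p)$ for $p\nmid N$, where $\lambda_i:=\lambda_{\pi_i}$. The first assertion follows by applying the first part of \corref{cor:selberg} to $\pi_1$ and to $\pi_2$ and adding. For the second, I would expand $\lambda(p)^2=\lambda_1(p)^2+2\lambda_1(p)\lambda_2(p)+\lambda_2(p)^2$: the two diagonal sums $\sum\lambda_i(p)^2/p$ each contribute $\varphi(M)^{-1}\log\log X + O_{F,M}(1)$ by the second part of \corref{cor:selberg}, so it remains to bound the cross term $\sum_{p\le X,\,(p,N)=1,\,p\equiv a\mod M}\lambda_1(p)\lambda_2(p)/p$ by $O_{F,M}(1)$. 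As in the proof of \corref{cor:selberg}, I would use orthogonality of the Dirichlet characters modulo $M$ to rewrite this as $\varphi(M)^{-1}\sum_{\chi\mod M}\chi(a)^{-1}\sum_{p\le X,\,(p,NM)=1}\lambda_{\pi_1}(p)\lambda_{\pi_2\times\chi}(p)/p$ and apply \propref{KLW13} to each inner sum (valid since $\pi_1$ is self-dual and $\pi_2\times\chi$ is cuspidal on $\GL_2(\A)$). The point requiring attention is that $\pi_1\not\simeq\pi_2\times\chi$ for every $\chi\mod M$: for $\chi$ trivial this is the hypothesis $\pi_1\not\simeq\pi_2$, while for $\chi$ non-trivial there is a prime $p\mid M$, and hence $p\nmid N$, at which $\chi$ is ramified, so $\pi_{1,p}$ is unramified whereas $(\pi_2\times\chi)_p$ is ramified. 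Thus every inner sum is $O_{F,M}(1)$, the cross term is $O_{F,M}(1)$, and we obtain the stated asymptotic with leading coefficient $2\varphi(M)^{-1}$. Apart from this mild twisting-and-conductor check in the Yoshida case, the argument is routine bookkeeping on top of \corref{cor:selberg}, so I do not anticipate a serious obstacle.
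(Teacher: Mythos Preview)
Your proposal is correct and follows essentially the same route as the paper. The paper's proof is a one-line appeal to \propref{KLW13}, \corref{cor:selberg}, and the expression $\lambda(p)=\lambda_\Pi(p)$ (general type) or $\lambda(p)=\lambda_{\pi_1}(p)+\lambda_{\pi_2}(p)$ (Yoshida type); you have simply written out the details the paper leaves implicit, including the conductor argument for $\pi_1\not\simeq\pi_2\times\chi$ in the cross term, which mirrors the argument already used inside the proof of \corref{cor:selberg}.
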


\begin{proof}
This follows immediately from Proposition \ref{KLW13}, Corollary \ref{cor:selberg} and the fact that $$
\lambda(p)
~=~
\begin{cases}
\lambda_\Pi(p) & \text{ if } F \text{ is of general type};\\
\lambda_{\pi_1}(p) + \lambda_{\pi_2}(p) & \text{ if } F \text{ is of Yoshida type}.
\end{cases}
$$  where $\Pi$ is an irreducible unitary cuspidal automorphic self-dual representation of $\GL_4(\A)$ and $\pi_1, \pi_2$ are non-isomorphic unitary cuspidal automorphic self-dual representation of $\GL_2(\A)$.
\end{proof}

\begin{prop}\label{G-Y}
Let $F \in S_k(\G_0^{(2)}(N))^{ \rm T}$ be a Hecke eigenform
at all good primes with normalized
Hecke eigenvalues $\lambda(m)$ for $\gcd(m,N)=1$. If $k=2$, then assume further that $F$ satisfies the Ramanujan conjecture.
Also let $a, M$ be positive integers such that $\gcd(aN, M) = 1$.
Then, the sequence $\{ \lambda(p) \}_{p \equiv a \mod M}$
has infinitely many positive and infinitely many negative elements.
\end{prop}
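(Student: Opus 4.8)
The plan is to derive both conclusions by contradiction from the two asymptotics recorded in Lemma~\ref{inert-prop}, using the Ramanujan bound to linearize the square. First I would note that since $\gcd(aN,M)=1$ we have $\gcd(a,M)=1$, so by Dirichlet's theorem there are infinitely many primes $p\equiv a\bmod M$, and since only finitely many of them divide $N$ there is genuinely something to prove. Suppose, for contradiction, that only finitely many primes $p\equiv a\bmod M$ satisfy $\lambda(p)>0$; then $\lambda(p)\le 0$ for all sufficiently large primes $p\equiv a\bmod M$ with $\gcd(p,N)=1$.

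The next step is to invoke the Ramanujan bound for $F$: Weissauer's theorem gives $|\lambda(p)|\le d_5(p)=5$ for all $p$ with $\gcd(p,N)=1$ when $k\ge 3$, and this bound is assumed in the hypothesis when $k=2$. Hence, for all large $p\equiv a\bmod M$ in our range, $\lambda(p)^2=|\lambda(p)|\cdot|\lambda(p)|\le 5\,|\lambda(p)|=-5\lambda(p)$. Summing this inequality over primes $p\le X$ with $p\equiv a\bmod M$ and $\gcd(p,N)=1$, and absorbing into an $O_{F,M}(1)$ error the finitely many primes $p\equiv a\bmod M$ for which $\lambda(p)>0$, I obtain
$$
\sum_{\substack{p\le X,\ (p,N)=1\\ p\equiv a\bmod M}}\frac{\lambda(p)^2}{p}
~\le~-5\sum_{\substack{p\le X,\ (p,N)=1\\ p\equiv a\bmod M}}\frac{\lambda(p)}{p}~+~O_{F,M}(1)~=~O_{F,M}(1),
$$
the last equality being the first estimate of Lemma~\ref{inert-prop}. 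But the second estimate of Lemma~\ref{inert-prop} asserts that the left-hand side equals $\frac{c}{\varphi(M)}\log\log X+O_{F,M}(1)$ with $c\in\{1,2\}$, which tends to infinity with $X$ — a contradiction. Hence infinitely many primes $p\equiv a\bmod M$ satisfy $\lambda(p)>0$.

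The second assertion, that infinitely many primes $p\equiv a\bmod M$ have $\lambda(p)<0$, follows by the symmetric argument: were this false, then $\lambda(p)\ge 0$ for all large $p$ in the progression, whence $\lambda(p)^2\le 5\lambda(p)$, and the same comparison of the two sums in Lemma~\ref{inert-prop} forces $\frac{c}{\varphi(M)}\log\log X+O_{F,M}(1)\le O_{F,M}(1)$, which is again impossible. As for the location of the difficulty: the combinatorial core of this proposition is essentially trivial once Lemma~\ref{inert-prop} is available, so the real content has been front-loaded — namely the Selberg-orthogonality input of Proposition~\ref{KLW13} feeding the two asymptotics, together with the Ramanujan bound needed to dominate $\lambda(p)^2$ by a constant multiple of $|\lambda(p)|$. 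The only mild bookkeeping in the write-up is tracking the condition $\gcd(p,N)=1$ and the finitely many exceptional primes through the $O_{F,M}(1)$ terms, which is routine.
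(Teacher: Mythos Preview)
Your proof is correct and follows essentially the same approach as the paper: both arguments compare the two asymptotics of Lemma~\ref{inert-prop} after linearizing $\lambda(p)^2 \le C|\lambda(p)|$ via the Ramanujan bound. The only cosmetic differences are that the paper uses the sharper constant $|\lambda(p)|\le 4$ and introduces a truncation parameter $y=X^\delta$ (summing over $y\le p\le X$ and sending $\delta\to 0$) rather than absorbing the finitely many exceptional primes into $O_{F,M}(1)$ as you do; your version is arguably cleaner and handles the strict inequalities more explicitly.
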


\begin{proof}
We prove the proposition when $F$ is of general type. The
case of Yoshida type follows same argument with obvious
modifications. For $0 < \delta < 1$, let $ y = X^\delta$.
Without loss of generality, assume that $\lambda(p) > 0$
for any prime $ y \le p \le X$ and $p\equiv a \mod M$.
By Weissuaer's estimate $|\lambda(p)| \le 4$.
Therefore \lemref{inert-prop} implies
$$
\frac{1}{\varphi(M)} \log\left( \frac{\log X}{\log y}\right) ~+~ O_{F, M}(1)
~=~
\sum_{y \le p \le X, (p, N)=1 \atop p \equiv a \mod M} \frac{\lambda(p)^2}{p}
~\le~
4 \sum_{y \le p \le X, (p, N)=1 \atop p \equiv a \mod M} \frac{\lambda(p)}{p}
~=~ O_{F, M}(1).
$$
This is impossible for sufficiently small $\delta > 0$ which completes the proof.
\end{proof}

\subsection{The case of CAP forms}
In the case of CAP forms we can prove a stronger result. \begin{prop}\label{CAP}
Let $F \in S_k(\G_0^{(2)}(N))^{ \rm CAP}$ be a Hecke eigenform
at all good primes with normalized Hecke eigenvalues $\lambda(m)$
for $\gcd(m,N)=1$. Assume that $F$ does not lie in $S_k^*(\Gamma^{(2)}_0(N))$.
Then there exists a prime $p$ with $\gcd(p, N)=1$ and
$$
p ~\ll_N~ 1
$$
such that $\lambda(p) < 0$.
\end{prop}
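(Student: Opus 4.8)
The plan is to run a case analysis based on Proposition~\ref{p:lfunctionsfacts}. Since $F$ lies in $S_k(\G_0^{(2)}(N))^{\rm CAP}$, is a Hecke eigenform at good primes, and does not lie in $S_k^*(\G_0^{(2)}(N))$, it must be of exactly one of the following three types: Saito--Kurokawa type with $\chi_0$ \emph{non-trivial}; Soudry type; or Howe--Piatetski-Shapiro type. In the latter two cases Proposition~\ref{p:lfunctionsfacts} moreover forces $k\in\{1,2\}$, so that for each fixed $N$ the form $F$ ranges over a \emph{finite} set; hence in those two cases it is enough to produce, for each such $F$, a single prime $p$ with $\gcd(p,N)=1$ and $\lambda(p)<0$, and the bound $p\ll_N 1$ then follows by taking the maximum over this finite set. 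Throughout, the only ``analytic'' input is the elementary inequality $\sqrt p+p^{-1/2}>2$, valid for every prime~$p$.

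In the Saito--Kurokawa case with $\chi_0$ non-trivial, Proposition~\ref{p:lfunctionsfacts} gives $\lambda(p)=\lambda_0(p)+\chi_0(p)\bigl(\sqrt p+p^{-1/2}\bigr)$ for $\gcd(p,N)=1$, where $\lambda_0(p)$ is the normalized Hecke eigenvalue at $p$ of a holomorphic newform of weight $2k-2$; in particular $|\lambda_0(p)|\le 2$ by Deligne's bound. Since $\chi_0$ is a non-trivial quadratic character unramified outside $N$, Dirichlet's theorem supplies infinitely many primes $p$ with $\gcd(p,N)=1$ and $\chi_0(p)=-1$, and for any such prime $\lambda(p)\le 2-(\sqrt p+p^{-1/2})<0$. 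The least such prime is bounded in terms of the conductor of $\chi_0$, hence in terms of $N$, and there are only finitely many possible $\chi_0$ once $N$ is fixed, so $p\ll_N 1$ uniformly in $k$. The Howe--Piatetski-Shapiro case is entirely similar when both characters are non-trivial: there $\lambda(p)=\bigl(\chi_1(p)+\chi_2(p)\bigr)\bigl(\sqrt p+p^{-1/2}\bigr)$, and since $\chi_1\ne\chi_2$ we have $\chi_1\chi_2\ne\triv$, so the conditions $\chi_1(p)=\chi_2(p)=-1$ cut out a non-empty union of residue classes; any prime $p\nmid N$ in it --- and there is one with $p\ll_N 1$ --- gives $\lambda(p)=-2(\sqrt p+p^{-1/2})<0$.

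In the Soudry case, Proposition~\ref{p:lfunctionsfacts} gives $\lambda(p)=\bigl(\sqrt p+p^{-1/2}\bigr)\lambda_0(p)$ for $\gcd(p,N)=1$, where $\lambda_0(p)$ is the Hecke eigenvalue at $p$ of a \emph{cuspidal} CM newform $f_0$ of weight $1$, so it suffices to find a prime $p\nmid N$ with $\lambda_0(p)<0$. One clean way to see that such $p$ exists is to pass to the two-dimensional dihedral Artin representation $\rho$ attached to $f_0$: for $p\nmid N$ one has $\lambda_0(p)=\operatorname{tr}\rho(\mathrm{Frob}_p)$, and since $f_0$ is cuspidal the image of $\rho$ is a genuine dihedral group $D_n$ with $n\ge 2$, whose rotation subgroup contains an element of negative trace (the rotation by $\pi$ when $n=2$, or a rotation through an angle in $(\pi/2,3\pi/2)$ when $n\ge 3$); the Chebotarev density theorem then yields infinitely many $p\nmid N$ realizing this Frobenius class. (Alternatively, a Rankin--Selberg argument exactly as in the proof of Proposition~\ref{non-CAP}, using that $L(s,f_0)$ is entire while $L(s,f_0\times f_0)$ has a pole at $s=1$, forces $\lambda_0(p)<0$ for infinitely many $p$.) As above, finiteness of the set of Soudry-type $F$ for fixed $N$ yields $p\ll_N 1$.

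The step I expect to be the main obstacle is the remaining Howe--Piatetski-Shapiro sub-case in which one of $\chi_1,\chi_2$ is trivial --- say $\chi_1=\triv$. Then $\lambda(p)=(1+\chi_2(p))\bigl(\sqrt p+p^{-1/2}\bigr)\ge 0$ for \emph{every} good prime $p$, so the proposition can only hold if this sub-case never arises from a non-zero holomorphic cusp form. Establishing this is the delicate point: it should come from Arthur's multiplicity formula applied to the (very degenerate) parameter $\triv\boxtimes\nu(2)\boxplus\chi_2\boxtimes\nu(2)$, the point being that the holomorphic (limit of) discrete series at the archimedean place either fails to lie in the corresponding local packet or lies in it carrying a component-group character that cannot be cancelled at the finite places (since $\chi_1$ is unramified and contributes trivially everywhere). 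A secondary, purely book-keeping issue --- relevant only if one wants the implied constant polynomial in $N$ rather than merely finite --- is to bound the conductors of $f_0$, $\chi_1$ and $\chi_2$ by a fixed power of $N$, by a local Arthur-packet estimate along the lines of the proof of Lemma~\ref{l:constants}, and to replace the appeals to Dirichlet and Chebotarev by their effective counterparts (Linnik's theorem and the effective Chebotarev theorem of Lagarias--Odlyzko).
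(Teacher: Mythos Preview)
Your overall strategy---a three-way case split via Proposition~\ref{p:lfunctionsfacts}---is exactly the paper's. The Saito--Kurokawa case is handled identically. For the Soudry case the paper takes a shortcut: rather than passing to the dihedral Artin representation and invoking Chebotarev as you do, it simply applies \cite[Theorem~1.1]{CK21} to the weight-$1$ form $f_0$ (whose level is bounded in terms of $N$), obtaining a prime $p\ll_N 1$ with $\lambda_0(p)<0$ directly. Your route is more self-contained; the paper's is quicker. Your finiteness argument (that $k\in\{1,2\}$ forces only finitely many such $F$ for fixed $N$) is not used in the paper, which instead bounds the auxiliary data (conductors of $\chi_0,\chi_1,\chi_2$, level of $f_0$) directly in terms of $N$; both packaging choices are fine.

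On the Howe--Piatetski-Shapiro sub-case with $\chi_1=\triv$: you are right that $\lambda(p)=(1+\chi_2(p))(\sqrt p+p^{-1/2})\ge 0$ for all good $p$ in that situation, and you correctly flag this as the crux. The paper's proof does \emph{not} address this sub-case either: it passes immediately to ``quadratic fields $K_1,K_2$ associated to $\chi_1,\chi_2$'' and argues via primes inert in both, tacitly assuming both characters are non-trivial. So your gap is not a deficiency relative to the paper---if anything you have been more scrupulous in identifying it. The resolution you sketch (that the archimedean packet or Arthur's sign condition should exclude $\chi_1=\triv$ from contributing a holomorphic cusp form) is the natural one, but neither you nor the paper supplies the details.
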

\begin{proof}
We go through the various types of $F$ using the terminology
of Proposition \ref{p:lfunctionsfacts}. First, let $F$ be of
Saito--Kurokawa type with $\chi_0 \ne 1$.
For any prime $\gcd(p, N)=1$, we have
$$
\lambda(p) = \lambda_0(p) + \left( p^{1/2}+p^{-1/2} \right)\chi_0(p),
$$
where $\lambda_0(p), \chi_0$ are as in \propref{p:lfunctionsfacts}.
Since $|\lambda_0(p)| \le 2$, the sign of $\lambda(p)$ is the
same as that of $\chi_0(p)$. Hence the bound for first sign
change does not depend on the weight $k$ but it depends
only on $\chi_0$ and consequently only on $N$ (since there are only finitely many quadratic Dirichlet characters unramified away from $N$). Therefore, in this case, there exists prime $p$
with $\gcd(p, N)=1$ and $p \ll_N 1$ such that $\lambda(p) < 0$.

If $F$ is of Soudry type, then, for any prime $\gcd(p, N) = 1$,
$$
\lambda(p) = \left( p^{1/2}+p^{-1/2} \right) \lambda_0(p),
$$
where $\lambda_0(p)$ is as in \propref{p:lfunctionsfacts}. Therefore the
bound for the first sign change is the same as that of $\lambda_0(p)$.
Recall here that the $\lambda_0(p)$ are the Hecke eigenvalues
for a cusp form of $\GL_2$ of weight $1$ and level bounded by
a constant depending only on $N$. So by Theorem 1.1 of \cite{CK21}
there exists prime $p$ with $\gcd(p, N)=1$ and $p \ll_N 1$ such that
$\lambda_0(p) < 0$. This implies that $\lambda(p) < 0$.

Finally, we assume that $F$ is of Howe--Piatetski-Shapiro type. Hence
for any prime $\gcd(p, N)=1$ we have
$$
\lambda(p) = \left( p^{1/2}+p^{-1/2} \right)\left( \chi_1(p)+\chi_2(p) \right),
$$
where as in \propref{p:lfunctionsfacts}, the Dirichlet characters $\chi_1, \chi_2$ are distinct. Clearly the sign
of $\lambda(p)$ is the same as that of $\chi_1(p)+\chi_2(p)$ and for each $N$ there are only finitely many possibilities for $\chi_1(p)+\chi_2(p)$. It only remains to prove that given $\chi_1$ and $\chi_2$ that
$\chi_1(p)+\chi_2(p)$ changes sign infinitely often.
Indeed, we have the following more precise information.
Let $K_1, K_2$ be quadratic fields associated to the
quadratic Dirichlet characters $\chi_1, \chi_2$ respectively.
The Dirichlet characters $\chi_1 \ne \chi_2$ implies $K_1 \ne K_2$.  Then, for $\gcd(p,N)=1$ we have
\begin{equation}\label{two-characters}
\chi_1(p)+\chi_2(p) ~=~
\begin{cases}
2 & \text{if } p \text{ is split in both } K_1 \text{ and } K_2,\\
-2 & \text{if } p \text{ is inert in } K_1 \text{ and } K_2,\\
0 & \text{if } p \text{ is split in one field and inert in other field}.
\end{cases}
\end{equation}
Therefore, $\chi_1(p)+\chi_2(p)$ changes sign infinitely often as $p$ runs over $\gcd(p, N)=1$.
\end{proof}

\begin{prop}\label{SK-HP}
Let $F \in S_k(\G_0^{(2)}(N))^{ \rm CAP}$ be a Hecke eigenform at all good primes with normalized
Hecke eigenvalues $\lambda(m)$ for $\gcd(m,N)=1$. Assume that $F$ does not lie in $S_k^*(\Gamma^{(2)}_0(N))$.  Then, for any
positive integers $a, M$ with $\gcd(aN, M)=1$, the sequence
$\{ \lambda(p) \}_{p \equiv a \mod M}$ changes sign
infinitely often.
\end{prop}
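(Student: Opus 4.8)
The plan is to mimic the proof of Proposition~\ref{G-Y}, the point being that for the CAP types at hand the relevant $L$-data is built entirely out of $\GL_1$ and $\GL_2$ objects, so that Corollary~\ref{cor:selberg} applies directly. First I would run through the three possibilities for $F$ given by Proposition~\ref{p:lfunctionsfacts} --- Saito--Kurokawa type with $\chi_0\ne1$, Soudry type, and Howe--Piatetski-Shapiro type --- and, exactly as in the proof of Proposition~\ref{CAP}, record that for $\gcd(p,N)=1$ the sign of $\lambda(p)$ equals the sign of an auxiliary real number $g(p)$: here $g=\chi_0$ in the Saito--Kurokawa case, $g=\lambda_{\pi_0}$ (the normalized Hecke eigenvalue of the weight-one form attached to $\pi_0$) in the Soudry case, and $g=\chi_1+\chi_2$ in the Howe--Piatetski-Shapiro case. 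Indeed, in the first case $\lambda(p)=\lambda_0(p)+(p^{1/2}+p^{-1/2})\chi_0(p)$ with $|\lambda_0(p)|\le2<p^{1/2}+p^{-1/2}$ and $\chi_0(p)\ne0$, while in the other two cases $\lambda(p)=(p^{1/2}+p^{-1/2})g(p)$. In every case $g$ is real-valued with $|g(p)|\le2$ (the bound $|\lambda_{\pi_0}(p)|\le2$ being elementary, for instance because $\pi_0=\mathrm{AI}_{E/\Q}(\theta)$ for a unitary Hecke character $\theta$), so it suffices to prove that $g(p)>0$ for infinitely many primes $p\equiv a\mod M$ with $\gcd(p,N)=1$, and $g(p)<0$ for infinitely many such primes.

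To that end I would establish the two estimates
\[
\sum_{\substack{p\le X,\ \gcd(p,N)=1\\ p\equiv a\mod M}}\frac{g(p)}{p}=O_{F,M}(1),
\qquad
\sum_{\substack{p\le X,\ \gcd(p,N)=1\\ p\equiv a\mod M}}\frac{g(p)^2}{p}=c\,\log\log X+O_{F,M}(1),
\]
with $c>0$ (one finds $c=\tfrac1{\varphi(M)}$ in the Saito--Kurokawa and Soudry cases, and $c=\tfrac2{\varphi(M)}$ in the Howe--Piatetski-Shapiro case). What makes this routine is that the conductors of $\chi_0,\chi_1,\chi_2$ and of $\pi_0$ all divide a power of $N$ by Proposition~\ref{p:lfunctionsfacts}, hence are coprime to $M$ because $\gcd(aN,M)=1$; in particular $\chi_0\chi$, $\chi_i\chi$ and $\chi_1\chi_2$ are nontrivial Dirichlet characters for every $\chi\bmod M$. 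When $g$ is a nontrivial quadratic Dirichlet character, the first estimate is the classical bound for character sums in progressions and the second is Mertens' theorem in progressions (as $g(p)^2=1$ for $\gcd(p,N)=1$); for the cross term $2\chi_1\chi_2$ in $(\chi_1+\chi_2)^2$ one uses Corollary~\ref{cor:selberg}. In the Soudry case both estimates follow from Corollary~\ref{cor:selberg} applied to $\pi_0$, which is legitimate because $\pi_0$ is self-dual: it equals $\mathrm{AI}_{E/\Q}(\theta)$ induced from the very quadratic field $E$ cut out by its own central character $\xi_0$, whence $\widetilde{\pi_0}\simeq\pi_0\otimes\xi_0^{-1}\simeq\pi_0$.

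Finally I would conclude exactly as in Proposition~\ref{G-Y}: if, for some fixed $\delta>0$, one had $g(p)\ge0$ for every prime $p\equiv a\mod M$ with $\gcd(p,N)=1$ and $X^\delta\le p\le X$, then subtracting the estimates at $X$ and at $X^\delta$ (whose implied constants are independent of $X$ and of $\delta$) and using $|g(p)|\le2$ would give
\[
c\log(1/\delta)+O_{F,M}(1)=\!\!\sum_{\substack{X^\delta\le p\le X,\ \gcd(p,N)=1\\ p\equiv a\mod M}}\!\!\frac{g(p)^2}{p}\ \le\ 2\!\!\sum_{\substack{X^\delta\le p\le X,\ \gcd(p,N)=1\\ p\equiv a\mod M}}\!\!\frac{g(p)}{p}=O_{F,M}(1),
\]
which is impossible once $\delta$ is small enough; letting $X\to\infty$ produces infinitely many primes $p\equiv a\mod M$ with $g(p)<0$, and the opposite inequality $g(p)\le0$ on such ranges is excluded symmetrically. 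Hence $g(p)$, and with it $\lambda(p)$, changes sign infinitely often along $p\equiv a\mod M$. I do not anticipate a real obstacle: beyond case-by-case bookkeeping on top of Corollary~\ref{cor:selberg}, the only points needing a moment's care are the coprimality of all the auxiliary conductors to $M$ (immediate from $\gcd(aN,M)=1$) and the self-duality of $\pi_0$ in the Soudry case. One could avoid the latter --- and treat the Soudry and Howe--Piatetski-Shapiro cases more uniformly --- by invoking Chebotarev's density theorem for the dihedral (respectively biquadratic) Galois extension attached to $\pi_0$ (respectively to $K_1K_2$, in the notation of the proof of Proposition~\ref{CAP}) in compositum with $\Q(\zeta_M)$, these being linearly disjoint over $\Q$ since the relevant discriminants are coprime to $M$.
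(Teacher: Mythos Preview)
Your proof is correct and follows essentially the same approach as the paper: both reduce to the sign behaviour of the auxiliary quantity $g(p)$ (equal to $\chi_0$, $\lambda_{\pi_0}$, or $\chi_1+\chi_2$ respectively) and conclude via character-sum or Selberg-orthogonality estimates along the progression. The only cosmetic difference is that the paper treats the Saito--Kurokawa and Howe--Piatetski-Shapiro cases by directly exhibiting the Mertens-type density of each sign-set via the characteristic functions $\tfrac12(1\pm\chi_0)$ and $\tfrac18(\chi_1+\chi_2\pm2)(\chi_1+\chi_2)$, whereas you apply the Proposition~\ref{G-Y} contradiction argument uniformly to all three cases; your explicit verification that $\pi_0$ is self-dual in the Soudry case is a detail the paper leaves to the reader.
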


\begin{proof}
The proof for the various cases follow a similar pattern.
We shall give full details of the proof when $F$ is of non-classical
Saito--Kurokawa type and indicate the main changes
in the proof for the Howe--Piatetski-Shapiro and Soudry types.

Assume that $F \in S_k(\G_0^{(2)}(N))$ is of Saito--Kurokawa
type but not of classical Saito--Kurokawa type. Then,
by \propref{p:lfunctionsfacts}, there is a primitive quadratic
Dirichlet character $\chi_0$ such that
$$
\lambda(p) = \lambda_0(p) + \left( p^{1/2} + p^{-1/2}\right) \chi_0(p)
$$
for all primes $\gcd(p, N)=1$. Here $\lambda_0(p)$ is as in
\propref{p:lfunctionsfacts}. We note here that sign of
$\lambda(p)$ is same as that of $\chi_0(p)$ when $\gcd(p,N)=1$.
It remains to prove that $\chi_0(p)$ changes sign infinitely
often when $p$ runs over $p \equiv a \mod M$ for any $\gcd(aN, M)=1$.

Observe that, for $X$ sufficiently large, we have
\begin{eqnarray}\label{positive}
\sum_{p \le X, (p,N)=1 \atop {p\equiv a \mod M \atop \chi_0(p) = 1}} \frac{1}{p}
& = &
\frac{1}{2}\sum_{p \le X \atop (p,N)=1} \frac{\chi_0(p)+1}{p}
\times
\frac{1}{\varphi(M)}\sum_{\chi \mod M} \chi(a)^{-1}\chi(p)
\\
&=&
\frac{1}{2\varphi(M)}\sum_{\chi \mod M} \chi(a)^{-1}
\left( \sum_{p \le X \atop (p,N)=1} \frac{\chi_0(p)\chi(p)}{p}
+
\sum_{p \le X \atop (p,N)=1} \frac{\chi(p)}{p}\right). \nonumber
\end{eqnarray}
Note that $\chi_0\chi \ne 1$ as $\gcd(N, M)=1$. Hence
$$
\sum_{p \le X, (p,N)=1 \atop {p\equiv a \mod M \atop \chi_0(p) = 1}} \frac{1}{p}
=
\frac{1}{2\varphi(M)} \log\log X + O_{F, M}(1).
$$
Similarly, using $1 -\chi_0(p)$ in \eqref{positive} instead
of $\chi_0(p)+1$, one can show
$$
\sum_{p \le X, (p,N)=1 \atop {p\equiv a \mod M \atop \chi_0(p) = -1}} \frac{1}{p}
=
\frac{1}{2\varphi(M)} \log\log X + O_{F, M}(1).
$$
This completes the proof in the case when $F$ is a non-classical
Saito--Kurokawa lift.

When $F$ is of Howe--Piatetski-Shapiro type and $\chi_1, \chi_2$
are two distinct quadratic Dirichlet characters as in \propref{p:lfunctionsfacts}.
We need to show that $\chi_1(p)+\chi_2(p)$ takes infinitely
many positive and negative values as $p$ runs over
$p \equiv a \mod M$ when $\gcd(aN, M)=1$. From \eqref{two-characters},
it is clear that one can use
$$
\frac{1}{8}\big( \chi_1(p)+ \chi_2(p) + 2 \big)\big( \chi_1(p) + \chi_2(p)\big)
=
\frac{1}{4} \big( 1 + \chi_1(p)\chi_2(p) + \chi_1(p) + \chi_2(p) \big)
$$
and
$$
\frac{1}{8}\big( \chi_1(p) + \chi_2(p) - 2 \big)\big( \chi_1(p) + \chi_2(p)\big)
=
\frac{1}{4}\big(1+ \chi_1(p)\chi_2(p) - \chi_1(p) - \chi_2(p)\big)
$$
as characteristic functions for $\{(p, N)=1 ~|~ \chi_1(p)+\chi_2(p)>0\}$
and $\{(p, N)=1 ~|~ \chi_1(p)+\chi_2(p)<0\}$ respectively
and argue as above to complete the proof.

Finally, we consider the case when $F$ is of Soudry type. By \propref{p:lfunctionsfacts}, there is a unitary cuspidal automorphic
representation $\pi_0$ of ${\rm GL}_2(\mathbb A)$ such that
$$
\lambda(p) = \left( p^{1/2}+p^{-1/2} \right)\lambda_0(p)
$$
for any prime $\gcd(p, N)=1$. Here $\lambda_0(p)$ is the normalized
Hecke eigenvalue of $\pi_0$ at $p$. Clearly, it is enough to prove
the statement of the proposition for $\lambda_0(p)$ which can be
done in a similar manner to the proof of \propref{G-Y}. We leave the details for
the reader.
\end{proof}

\emph{Proofs of main results.}
\thmref{First-sign} follows from Propositions \ref{non-CAP} and \ref{CAP}. \thmref{split-inert} follows from Propositions \ref{G-Y} and \ref{SK-HP}.
\qed

\subsection{Odds and ends}
We explain some of the issues that can occur with Theorem \ref{split-inert} when $\gcd(N, M) >1$.
If the level $N$ is square free, then \thmref{split-inert} can be shown to hold for
any positive integers $a$ and $M$ such that $\gcd(a, M)=1$. In this case there is no need
to assume $\gcd(N, M)=1$. However for non-squarefree levels $N$ there are some subtle issues.

For an immediate example of the kind of issues that can occur, consider the case when $F$ is of Yoshida type which is coming from two CM forms
$f_1$ of weight $2$ and $f_2$ of weight $2k-2$ and assume that they both have CM by the same imaginary quadratic field $K$.
Then $\lambda(p) = \lambda_1(p)+\lambda_2(p) = 0$
for all primes $p$ which are inert in $K$. Hence there is an
arithmetic progression $a \mod M$ where $\gcd(M, N)>1$ such that
$\lambda(p) = 0$ for all $p \equiv a \mod M$.

Furthermore, for the CAP forms of Saito-Kurokawa or Howe--Piatetski-Shapiro type, one can construct arithmetic progressions (with the modulus $M$ only divisible by primes dividing the level) such that all Hecke eigenvalues at primes in the progression are positive.
\begin{lem}Let $F \in S_k(\G_0^{(2)}(N))$ be a Hecke eigenform with
normalized eigenvalues $\lambda(m)$.
Also assume that $F$ is either of Saito--Kurokawa type or of
Howe--Piatetski-Shapiro type. Then there exists a positive
integer $M$ which is only divisible by primes dividng $N$ such that $\lambda(p) > 0$ for
all $p \equiv 1 \mod M$.
\end{lem}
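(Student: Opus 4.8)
The plan is to read the explicit formula for $\lambda(p)$ off Proposition \ref{p:lfunctionsfacts} in each of the two cases and then choose $M$ large enough --- but with no prime factor outside those dividing $N$ --- that every prime $p \equiv 1 \mod M$ forces the relevant quadratic characters to take the value $1$, at which point positivity of $\lambda(p)$ is immediate. Inserting the radical $\prod_{q \mid N} q$ as a factor of $M$ simultaneously guarantees $\gcd(p,N)=1$ for every such $p$ (if $q \mid N$ then $q \mid M$, so $q \nmid p$), which is needed both for $\lambda(p)$ to be defined and for the formulas of Proposition \ref{p:lfunctionsfacts} to apply; in particular this will show that the coprimality hypothesis $\gcd(aN,M)=1$ in Theorem \ref{split-inert} cannot simply be dropped.

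First I would treat the Saito--Kurokawa case. Here Proposition \ref{p:lfunctionsfacts} supplies a representation $\pi_0$ of $\GL_2(\A)$ attached to a holomorphic newform of weight $2k-2$ (with $k \ge 2$ in this case, so $2k-2 \ge 2$) together with a primitive quadratic Dirichlet character $\chi_0$, both unramified outside $N$, with $\lambda(p) = \lambda_0(p) + (p^{1/2} + p^{-1/2})\chi_0(p)$ for all $p \nmid N$. I would set $M = c_0 \prod_{q \mid N} q$, where $c_0$ is the conductor of $\chi_0$ (which is supported only on primes dividing $N$). Any prime $p \equiv 1 \mod M$ then has $\gcd(p,N)=1$ and $\chi_0(p)=1$, so $\lambda(p) = \lambda_0(p) + p^{1/2} + p^{-1/2}$; since $\pi_{0,p}$ is unramified the Ramanujan bound gives $|\lambda_0(p)| \le 2$, while $p^{1/2} + p^{-1/2} > 2$ for every prime $p$, whence $\lambda(p) > 0$. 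This is exactly the numerical input already used in the proof of Proposition \ref{CAP}.

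The Howe--Piatetski-Shapiro case is even quicker: Proposition \ref{p:lfunctionsfacts} gives distinct quadratic Dirichlet characters $\chi_1, \chi_2$, both unramified outside $N$, with $\lambda(p) = (p^{1/2} + p^{-1/2})(\chi_1(p) + \chi_2(p))$ for $p \nmid N$, so taking $M$ to be the least common multiple of $\prod_{q \mid N} q$ and the conductors of $\chi_1$ and $\chi_2$, every prime $p \equiv 1 \mod M$ satisfies $\gcd(p,N)=1$ and $\chi_1(p)=\chi_2(p)=1$, giving $\lambda(p) = 2(p^{1/2} + p^{-1/2}) > 0$. I do not anticipate a genuine obstacle here, since the lemma is essentially a direct consequence of the Arthur-packet formulas; the only points deserving a little care are the bookkeeping that keeps $M$ supported on the primes dividing $N$ while still forcing $p$ coprime to $N$, and the legitimacy of the bound $|\lambda_0(p)| \le 2$, which holds because the Saito--Kurokawa case forces the associated newform to have weight $2k-2 \ge 2$.
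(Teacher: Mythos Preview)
Your proposal is correct and follows essentially the same approach as the paper: in each case you read off the explicit formula for $\lambda(p)$ from Proposition \ref{p:lfunctionsfacts}, take $M$ to be supported on the primes dividing $N$ and divisible by the relevant conductors, and conclude positivity via $|\lambda_0(p)|\le 2 < p^{1/2}+p^{-1/2}$ (resp.\ $\chi_1(p)=\chi_2(p)=1$). The only cosmetic difference is that the paper multiplies by $N$ itself rather than its radical, and does not spell out the reason the Ramanujan bound applies to $\lambda_0(p)$; your version is slightly more careful on both points.
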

\begin{proof}
Let us first assume that $F \in S_k(\G_0^{(2)}(N))$ is
of Saito--Kurokawa type. Then, by \propref{p:lfunctionsfacts},
there is a primitive Dirichlet character $\chi_0  \mod M$ with $\chi^2 = 1$ and a
representation $\pi_0$ of ${\rm GL}_2(\mathbb A)$ with normalized
Hecke eigenvalues $\lambda_0(p)$ at primes such that
$$
\lambda(p) = \lambda_0(p) + p^{1/2}\chi_0(p) + p^{-1/2}\chi_0(p)
$$
for all primes $\gcd(p, N)=1$. Note that if $p \equiv 1 \mod M$ then
$\chi_0(p)=1$ and hence $\lambda(p) = \lambda_0(p)+p^{1/2}+p^{-1/2} > 0$
for all $p \equiv 1 \mod MN$.

Let $F$ be of Howe--Piatetski-Shapiro type. \propref{p:lfunctionsfacts} implies that there
are two distinct quadratic Dirichlet characters $\chi_1, \chi_2$ with
conductors $M_1, M_2$ respectively such that
$$
\lambda(p) = \left( p^{1/2}+p^{-1/2} \right)\chi_1(p) + \left( p^{1/2}+p^{-1/2}\right)\chi_2(p)
$$
for all $\gcd(p, N)=1$. Note that if $p \equiv 1 \mod M_1M_2$ then $\chi_1(p)=1=\chi_2(p)$.
Hence $\lambda(p) = 2(p^{1/2}+p^{-1/2}) > 0$ for all $p \equiv 1 \mod M_1M_2N$.
\end{proof}

\section{Sign changes of radial Fourier coefficients of cusp forms}
The main aim of this section is to prove Theorem \ref{t:fcsignchange}.
We divide the proof into a few parts.

\subsection{Relations between Fourier coefficients}
We will need the following proposition.

\begin{prop}\label{F-H relation}
Let $F \in  S_k(\G_0^{(2)}(N))$ be a Hecke eigenform at all good
primes having Fourier coefficients $a(T) ~( T \in \n_2)$.
Then, for a fundamental matrix $T_0 \in \n_2$ and a prime $p$ with
$\gcd(p,N)=1$ such that  $p$ is inert in $K = \Q(\sqrt{ -4~{\rm det }~ T_0})$,
we have
$$
a(pT_0) = a(T_0) \eta(p)
$$
where $\eta(p)$ is the $p$-th Hecke eigenvalue of $F$.
\end{prop}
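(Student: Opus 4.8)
The plan is to compute the $T_0$-th Fourier coefficient of $T(p)F$ in two different ways. Since $F$ is a Hecke eigenform at all good primes and $\gcd(p,N)=1$, we have $T(p)F=\eta(p)F$, so this coefficient equals $\eta(p)\,a(T_0)$. On the other hand, for $p\nmid N$ the action of $T(p)$ on Fourier coefficients of a degree-$2$ Siegel cusp form of level $N$ is given by the classical explicit formula (see \cite{andzhu}): if $b(T)$ denotes the $T$-th Fourier coefficient of $T(p)F$, then
\[
b(T)\;=\;a(pT)\;+\;p^{k-2}\sum_{D}a\!\left(p^{-1}D^{t}T D\right)\;+\;p^{2k-3}\,a\!\left(p^{-1}T\right),
\]
where $D$ runs over a set of representatives for the $p+1$ left cosets $\GL_2(\Z)\backslash\{D\in M_2(\Z):\det D=p\}$, and where $a(S):=0$ by convention whenever $S\notin\n_2$. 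The precise exponents of $p$ play no role below; what matters is that the coefficient of $a(pT)$ equals $1$, which is exactly the normalisation of $T(p)$ fixed in Section~\ref{s:prelim}. Specialising to $T=T_0$, it therefore suffices to show that under the stated hypotheses every term on the right-hand side other than $a(pT_0)$ vanishes.

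The last term is straightforward. Since $-4\det(T_0)$ is a fundamental discriminant, $T_0$ is \emph{primitive} (i.e.\ $\gcd(t_{11},2t_{12},t_{22})=1$): a non-trivial common factor $f$ would force $f^{2}\mid 4\det(T_0)$, which is incompatible with $-4\det(T_0)$ being a fundamental discriminant, as one sees by treating $f$ odd and $f$ even separately. Consequently $p^{-1}T_0\notin\n_2$, so $a(p^{-1}T_0)=0$ and the last term drops out.

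It remains to kill the middle sum. Identify the $p+1$ left cosets above with the $p+1$ index-$p$ sublattices $\Lambda\subset\Z^{2}$ via $D\mapsto D\Z^{2}$; for each such $\Lambda$ choose $D$ whose columns form a $\Z$-basis of $\Lambda$, and let $v_D\in\mathbb{F}_p^{2}\setminus\{0\}$ span the line $\Lambda\bmod p$. Let $Q(u):=u^{t}T_0\,u$ be the integral binary quadratic form attached to $T_0$, so $\disc(Q)=-4\det(T_0)$. A direct computation — conveniently carried out with the explicit representatives $\mat{1}{0}{0}{p}$ and $\mat{p}{j}{0}{1}$ for $0\le j\le p-1$, which realise the lines $\langle(1,0)\rangle$ and $\langle(j,1)\rangle$ respectively — shows that $p^{-1}D^{t}T_0 D\in\n_2$ if and only if $p\mid Q(v_D)$, i.e.\ if and only if the line spanned by $v_D$ is isotropic for $Q$ modulo $p$; and as $D$ varies over the cosets, $v_D$ ranges over all of $\mathbb{P}^{1}(\mathbb{F}_p)$. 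Now for odd $p$ a binary quadratic form over $\mathbb{F}_p$ has a non-trivial zero if and only if its discriminant is a square in $\mathbb{F}_p$ (including $0$); since $p$ inert in $K=\Q\!\left(\sqrt{-4\det(T_0)}\right)$ means $-4\det(T_0)$ is a non-zero non-square modulo $p$, there is no such zero. (The prime $p=2$ is covered as well: inertness forces $-4\det(T_0)\equiv 5\bmod 8$, whence $Q(1,0)$, $Q(0,1)$, $Q(1,1)$ are all odd.) Thus every term of the middle sum vanishes, and the displayed formula collapses to $\eta(p)\,a(T_0)=a(pT_0)$, as claimed.

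The only genuinely delicate point is the bookkeeping behind the displayed Hecke formula — pinning down the exact set of coset representatives for $T(p)$, the powers of $p$ (of which only the leading one matters here), and the convention $a(S)=0$ for $S\notin\n_2$, so that the lower-order contributions are precisely $a(p^{-1}D^{t}TD)$ and $a(p^{-1}T)$. Granting that standard input (cf.\ \cite{andzhu}), the remainder is the elementary observation that the lower-order terms are governed by $p$-isotropic vectors of the quadratic form $T_0$, which the inertness hypothesis excludes.
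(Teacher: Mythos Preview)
Your proof is correct and follows a genuinely different route from the paper's own argument. The paper itself remarks that the identity ``can be derived from a fundamental identity of Andrianov'' but then opts instead for a representation-theoretic proof via Bessel models: it invokes the relation
\[
\sum_{i=1}^h \Lambda(N_i)a(pN_i) = p^{k-\frac{3}{2}}U^{1,0}_p(\alpha_{p,1},\alpha_{p,3};K,\Lambda)\sum_{i=1}^h \Lambda(N_i)a(N_i)
\]
for every character $\Lambda$ of the class group, computes the Sugano Bessel function $U^{1,0}_p$ explicitly (the inert hypothesis kills the extra $\gamma$-term so that $p^{k-3/2}U^{1,0}_p=\eta(p)$), and then unravels via character orthogonality to get $a(pN_i)=\eta(p)a(N_i)$ for each class representative.

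Your argument is precisely the classical Andrianov approach the paper alludes to: you write out the explicit action of $T(p)$ on Fourier coefficients and observe that the two lower-order contributions vanish --- the $a(p^{-1}T_0)$ term by primitivity of a fundamental $T_0$, and the middle sum because $p^{-1}D^tT_0D\in\n_2$ forces an isotropic line for $T_0\bmod p$, which the inert hypothesis excludes. This is more elementary and entirely self-contained, needing nothing beyond the standard coset decomposition for $T(p)$. The paper's Bessel-model approach, by contrast, is less hands-on but dovetails with the machinery used elsewhere in the paper (notably Section~5, where local Bessel functions at non-spherical places drive the Fourier-coefficient bounds), and it yields the full class-group statement $a(pN_i)=\eta(p)a(N_i)$ for every $i$ simultaneously, rather than just for the chosen $T_0$.
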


\begin{proof}
This can be derived from a fundamental identity of Andrianov
\cite[Theorem 5.28]{AA09}. However we give here
another proof relying on Bessel models.
Let $d := -4 \ {\rm det}~ T_0$, and $\Lambda$ be a character
of the class group $\Cl_d$ of $K:=\Q(\sqrt{d})$. It is well-known
that (see, e.g., \cite[p.84]{AA74} or \cite[Prop 5.3]{pssmb}) that $\Cl_d$ can be identified
with the set of $\SL_2(\Z)$-equivalence classes of matrices
$S \in \n_2$ with $-4 \ {\rm det}~S = d$. Let $\{ N_i \}_{i = 1}^h$
be a complete set of representatives of $\Cl_d$ with $N_1 = T_0$.
By a special case of Corollary 2 of \cite{marzec21} (which
generalizes Theorem 2.10 of \cite{KST12}) we obtain for each
$\gcd(p,N)=1$ the relation
\begin{equation} \label{e:fourierrelation}
\sum_{i=1}^h
\Lambda(N_i)a(pN_i) =
p^{k-\frac{3}{2}}U^{1, 0}_{p}(\alpha_{p,1},\alpha_{p,3}; K, \Lambda)\sum_{i=1}^h \Lambda(N_i)a(N_i),
\end{equation}
where $\alpha_{p,i}$ are the Satake parameters and
$U^{\ell, m}_{p}( -, - ; K, \Lambda)$ is the (normalized) Bessel
function attached to a spherical representation, as defined in
\cite[Theorem 2.5]{KST12}. Using Sugano's formula for $U^{1, 0}_{p}$
as given in \cite[(2.3.6)]{KST12}, we have that
\begin{equation}\label{e:sugano}
U^{1, 0}_{p}(a,b; K, \Lambda) = a+b+a^{-1} + b^{-1} - p^{-1/2}\gamma,
\end{equation}
where for each $p$ coprime to $N$ we have
\begin{equation}
\gamma =
\begin{cases}
0 &\text{ if } p \text{ is inert in } K, \\
\Lambda_p(\varpi_{K,p}) &\text{ if } p \text{ is ramified in } K, \\
\Lambda_p(\varpi_{K,p}) + \overline{\Lambda_p(\varpi_{K,p})}
&
\text{ if } p \text{ is split in } K.
\end{cases}
\end{equation}
By our assumption $p$ is inert in $K$, so we have $\gamma=0$
and thus obtain $p^{k-\frac{3}{2}}(U^{1, 0}_{p}(\alpha_{p,1},\alpha_{p,3}))
=
p^{k-\frac{3}{2}}(\alpha_{p,1} + \alpha_{p,3} + \alpha_{p,1}^{-1} + \alpha_{p,3}^{-1})
=
\eta(p)$
and so \eqref{e:fourierrelation} reduces to
$$
\sum_{i=1}^h
\Lambda(N_i)a(pN_i) =
\eta(p)\sum_{i=1}^h \Lambda(N_i)a(N_i).
$$
Since this is true for any character $\Lambda$ of $\Cl_d$, we
use orthogonality of characters to deduce that
$a(pN_i) = \eta(p)a(N_i)$ for each $i$.
\end{proof}
Applying \thmref{split-inert} and \propref{F-H relation}, it is
easy to obtain the following weaker form of Theorem \ref{t:fcsignchange}.

\begin{cor}\label{F-Hecke}
Let $F \in S_k(\G_0^{(2)}(N))$ be a Hecke eigenform at all good
primes having Fourier coefficients $a(T)$. Assume that $F$ does
not lie in $S_k^*(\G_0^{(2)}(N))$. If $k=2$ and $F$ is not attached to a CAP representation, then assume further that $F$ satisfies the Ramanujan conjecture.  Then for any fundamental
$T_0 \in \n_2$ such that $\gcd(4\det(T_0), N)=1$ and
$a(T_0) \ne 0$,  there are infinitely many primes $p$ with
$a(pT_0) > 0$ and there are infinitely many primes $p$ such
that $a(pT_0) < 0$.
\end{cor}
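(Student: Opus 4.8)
The plan is to combine the Fourier-coefficient/Hecke-eigenvalue identity of \propref{F-H relation} with the prime-sign-change result of \thmref{split-inert}; the only real work is to check that the primes $p$ which are inert in $K := \Q(\sqrt{-4\det(T_0)})$ contain a full arithmetic progression of the type to which \thmref{split-inert} applies.

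First I would record that, $T_0$ being fundamental, the integer $d := -4\det(T_0)$ is a fundamental discriminant, so $K = \Q(\sqrt d)$ is a genuine quadratic field whose associated quadratic Dirichlet character $\chi_d$ has conductor $|d|$ and is non-trivial; in particular $\chi_d$ is surjective onto $\{\pm 1\}$. A prime $p\nmid d$ is inert in $K$ exactly when $\chi_d(p)=-1$, so there is a residue class $a \bmod |d|$ with $\chi_d(a)=-1$, and then automatically $\gcd(a,|d|)=1$; every prime $p\equiv a \pmod{|d|}$ is inert in $K$. The hypothesis $\gcd(4\det(T_0),N)=1$ gives $\gcd(d,N)=1$, so setting $M:=|d|$ we have $\gcd(aN,M)=1$, i.e. the pair $(a,M)$ is admissible in \thmref{split-inert}.

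Next, for each prime $p\equiv a \pmod M$ we have $\gcd(p,N)=1$ and $p$ inert in $K$, so \propref{F-H relation} yields $a(pT_0)=a(T_0)\,\eta(p)$ with $\eta(p)\in\R$. Since $F$ satisfies precisely the hypotheses of \thmref{split-inert} (a Hecke eigenform at all good primes not lying in $S_k^*(\G_0^{(2)}(N))$, plus the Ramanujan assumption in the exceptional case $k=2$, $F$ non-CAP), that theorem shows $(\eta(p))_{p\equiv a\bmod M}$ changes sign infinitely often, so $\eta(p)>0$ for infinitely many such $p$ and $\eta(p)<0$ for infinitely many such $p$. As $a(T_0)\ne 0$ is a fixed real number, the identity $a(pT_0)=a(T_0)\eta(p)$ transfers this dichotomy: whether $a(T_0)>0$ or $a(T_0)<0$, there are infinitely many primes $p$ with $a(pT_0)>0$ and infinitely many with $a(pT_0)<0$.

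There is no genuine obstacle here, as the two substantive inputs are already available; the only points requiring care are the elementary identification of ``inert in $K$'' with membership in a prescribed class modulo $|d|$, and the verification of $\gcd(aN,M)=1$ so that \thmref{split-inert} may be invoked. (If one prefers not to assume the Fourier coefficients of $F$ are real, the conclusion is more naturally phrased as: the ratio $a(pT_0)/a(T_0)=\eta(p)$ attains infinitely many positive and infinitely many negative values.)
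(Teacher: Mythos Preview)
Your proof is correct and follows essentially the same route as the paper's own proof: apply \propref{F-H relation} on the inert primes and then invoke \thmref{split-inert} with $M=4\det(T_0)$. You supply slightly more detail than the paper does in justifying that the inert primes contain a full progression $a\bmod M$ with $\gcd(aN,M)=1$, and your closing parenthetical about reality of the $a(T)$ is a nice touch. One tiny slip: the assertion ``for each prime $p\equiv a\pmod M$ we have $\gcd(p,N)=1$'' is not literally true (a prime divisor of $N$ could fall in that class, since you only know $\gcd(N,M)=1$); but of course only finitely many primes divide $N$, so this does not affect the conclusion.
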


\begin{proof}
Let $\eta(p)$ be the $p$-th Hecke eigenvalue of $F$. Then for any
fundamental $T_0$ with $a(T_0) \ne 0$, \propref{F-H relation} implies
$$
a(pT_0) = a(T_0) \eta(p)
$$
for any prime $p$ which is coprime to $N$ and inert in
$K = \Q(\sqrt{-4{\rm det}~T_0})$. Since the latter set
contains an arithmetic progression modulo $M=4\det(T_0)$,
we apply \thmref{split-inert} to complete the proof.
\end{proof}
While \corref{F-Hecke} is an
easy consequence of \thmref{split-inert}
and \propref{F-H relation}, we will prove below in
Proposition \ref{F-o-Maass} that one can extend
\corref{F-Hecke} to an \emph{arbitrary} cusp form
$F \in S_k^*(\G_0^{(2)}(N))^\perp$. Note here that if $F \in S_k(\G_0^{(2)}(N))$ is a Hecke eigenform at all
good primes that does not lie in $S_k^*(\G_0^{(2)}(N))$ then
$F$ lies
in $S_k^*(\G_0^{(2)}(N))^\perp$.

The next remark clarifies what is known about the existence
of fundamental matrices $T_0$ as above. In particular, if $N = 1$
(or if one makes some mild assumptions
on $F$ at the primes dividing  odd and squarefree $N$) then there are lots of
$T_0 \in \n_2$ such that $\gcd(4\det(T_0), N)=1$ and
$a(T_0) \ne 0$.

\begin{rmk}\label{remark:nonvanishing}
Suppose that $N$ is odd and squarefree and $F \in S_k(\Gamma^{(2)}_0(N))$ is non-zero with
Fourier coefficients $a(T)$. If $N>1$, assume that $F$ is an
eigenform for the $U(p)$ Hecke operator for the finitely many
primes $p|N$; we make no assumptions concerning whether
$F$ is a Hecke eigenform at primes not dividing the level $N$.
Then by the results of \cite{JLS21+}, one knows that given
$\epsilon>0$, for all sufficiently large $X$ there are
$\ge X^{1-\epsilon}$ distinct odd squarefree integers
$n_i \in [X, 2X]$ satisfying $\gcd(n_i, N)=1$ and $-n_i \equiv 1 \mod 4$
(in particular $-n_i$ is a fundamental discriminant) such that
for each $n_i$ as above there is a fundamental matrix $T_i$
with $4{\rm det}~T_i = n_i$ and $a(T_i) \neq 0$.
\end{rmk}

We will also need another relation which is specific to
the classical Saito--Kurokawa lifts.

\begin{lem}\label{Kohnen}
Let $F \in S_k^*(\G_0^{(2)}(N))$ be a Hecke eigenform
at all good primes with normalized Hecke eigenvalues
$\lambda(m)$ for $\gcd(m,N)=1$. Let $\pi_0$ be the
representation of $\GL_2(\A)$ associated to $F$ (see case
\ref{case:SK} of Proposition \ref{p:lfunctionsfacts}) and
let $\lambda_0(n)$ denote the normalized Hecke eigenvalue
of $\pi_0$ at $n$.
Let $a(T)~(T \in \n_2)$ be the Fourier coefficients of $F$ and
let $T_0$ be a fundamental matrix with the fundamental
discriminant $d := -4{\rm det}~T_0$. Then
$$
a(pT_0)
~=~
a(T_0) p^{k-1} \left[ 1 - \left( \frac{d}{p} \right)\frac{1}{p}
~+~
\frac{\lambda_0(p)}{p^{1/2}} \right]
$$
for any prime $p$.
\end{lem}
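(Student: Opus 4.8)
The plan is to run the Bessel-model argument used for \propref{F-H relation}, but now retaining the extra terms in Sugano's formula that occur at primes $p$ which are split or ramified in $K=\Q(\sqrt d)$. Fix a character $\Lambda$ of the class group $\Cl_d$ and a complete set of representatives $\{N_i\}_{i=1}^h$ with $N_1=T_0$. Exactly as in the proof of \propref{F-H relation}, the identity \eqref{e:fourierrelation} combined with \eqref{e:sugano} gives
$$\sum_{i=1}^h \Lambda(N_i)\,a(pN_i)\;=\;p^{k-\frac32}\Bigl(\alpha_{p,1}+\alpha_{p,1}^{-1}+\alpha_{p,3}+\alpha_{p,3}^{-1}-p^{-\frac12}\gamma(\Lambda)\Bigr)\sum_{i=1}^h \Lambda(N_i)\,a(N_i),$$
where $\gamma(\Lambda)$ equals $0$, $\Lambda_p(\varpi_{K,p})$, or $\Lambda_p(\varpi_{K,p})+\overline{\Lambda_p(\varpi_{K,p})}$ according as $p$ is inert, ramified, or split in $K$. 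For $p\nmid N$, case \ref{case:SK} of \propref{p:lfunctionsfacts} gives $\alpha_{p,1}+\alpha_{p,1}^{-1}+\alpha_{p,3}+\alpha_{p,3}^{-1}=\lambda(p)=\lambda_0(p)+p^{1/2}+p^{-1/2}$.

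The crucial extra ingredient, not needed for \propref{F-H relation}, is that for a form $F$ of classical Saito--Kurokawa type one has $a(S)=a(T_0)$ for every fundamental $S\in\n_2$ with $-4\det S=d$; equivalently, the sum $\sum_{i=1}^h\Lambda(N_i)\,a(N_i)$ (which is a non-zero multiple of the $(d,\Lambda)$-Bessel period of $F$) vanishes for every non-trivial character $\Lambda$ of $\Cl_d$. When $N$ is squarefree this is immediate from the description of $F$ via Jacobi forms of level $N$ and half-integral weight forms (the Eichler--Zagier--Maass chain, \cite{EZ85,TI12,AB15}): for fundamental $S$ one has $a(S)=c(|d|)$, where the $c(\cdot)$ are the Fourier coefficients of the associated half-integral weight form and depend only on $|d|$. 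For general $N$ it should instead follow from the structure of $\pi$: by the refined Bessel period formula for $\GSp_4$ (\cite{pssmb,FurMori}), the $(d,\Lambda)$-Bessel period vanishes whenever the $L$-value $L(\tfrac12,\pi\times\mathrm{AI}_{K/\Q}(\Lambda))$ does, and for $\pi$ of Saito--Kurokawa type with trivial $\chi_0$ this $L$-value contains the factor $L(0,\mathrm{AI}_{K/\Q}(\Lambda))=L(0,\Lambda)$ (arising from the $\zeta(s-\tfrac12)$ factor of the spinor $L$-function of $\pi$), which vanishes whenever $\Lambda\neq1$: the completed $L$-function of a finite-order Hecke character $\Lambda$ of the imaginary quadratic field $K$ is entire, while its archimedean factor $\G_{\C}(s)$ has a pole at $s=0$.

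Granting this, the displayed relation forces $\sum_{i=1}^h\Lambda(N_i)\,a(pN_i)=0$ for every non-trivial $\Lambda$, so that $a(pN_i)$ is likewise independent of $i$ by Fourier inversion on $\Cl_d$. Taking $\Lambda=1$, for which $\Lambda_p(\varpi_{K,p})=1$ and hence $\gamma(1)=1+\bigl(\tfrac dp\bigr)$ uniformly in all three cases (with value $0$, $1$, $2$ when $p$ is inert, ramified, split), and cancelling the common factor $h$, the relation becomes
$$a(pT_0)=p^{k-\frac32}\Bigl(\lambda_0(p)+p^{1/2}+p^{-1/2}-p^{-1/2}\bigl(1+\bigl(\tfrac dp\bigr)\bigr)\Bigr)a(T_0)=a(T_0)\,p^{k-1}\Bigl(1-\bigl(\tfrac dp\bigr)\tfrac1p+\tfrac{\lambda_0(p)}{p^{1/2}}\Bigr),$$
which is the asserted identity for $p\nmid N$. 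For $p\mid N$ one repeats the argument with the unramified local Bessel computation replaced by its ramified analogue; alternatively, only primes $p\nmid N$ (indeed only large primes) are actually needed in the application of this lemma. I expect the genuine obstacle to be the coincidence $a(S)=a(T_0)$ itself: proving it cleanly for arbitrary, possibly non-squarefree, level $N$ requires either making the refined Bessel period formula precise in that generality, or an independent argument that the fundamental Fourier coefficients of a classical Saito--Kurokawa form depend only on the discriminant.
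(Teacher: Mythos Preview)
Your approach is essentially the same as the paper's: apply \eqref{e:fourierrelation} with $\Lambda=1$, use the Saito--Kurokawa relation $\lambda(p)=\lambda_0(p)+p^{1/2}+p^{-1/2}$, and simplify. The only place you diverge is precisely the step you flag as the ``genuine obstacle'': the claim that $a(N_i)=a(N_j)$ (and $a(pN_i)=a(pN_j)$) for all $i,j$. Here you work much harder than necessary. The paper simply invokes Lemma~9 of \cite{marzec21} (or equivalently Theorem~5.1 of \cite{pssmb}; the same statement appears later in the paper as \lemref{l:SKkey}): for a form of classical Saito--Kurokawa type, the Fourier coefficient $a(S)$ depends only on $\disc(S)$ and the content $L_S$. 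Since all the $N_i$ are fundamental with the same discriminant $d$ (and likewise all the $pN_i$ have discriminant $dp^2$ and content $p$), the constancy is immediate. This result is proved directly from the structure of the local Bessel models of the Type~IIb (and Vb, VIb) representations, without passing through global $L$-values.

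Your detour via the refined Bessel period formula and the vanishing of $L(0,\Lambda)$ for non-trivial class group characters $\Lambda$ is correct in outline and is an interesting alternative, but it is more delicate than what is actually needed and (as you note) its status for general $N$ depends on the range of validity of the refined formula. The direct argument from \cite{marzec21} avoids all of this. Your remark that only $p\nmid N$ is needed for the application (Proposition~\ref{Maass}) is well taken; the paper's own proof also invokes \eqref{e:fourierrelation}, which is stated there only for $\gcd(p,N)=1$.
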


\begin{proof}
Let $\{ N_i \}_{i = 1}^h$ be as in the proof of Proposition \ref{F-H relation}.
By a special case of Lemma 9 of \cite{marzec21} (see also
Theorem 5.1 of \cite{pssmb}) $a(N_i) = a(N_j)$ and $a(pN_i) = a(pN_j)$
for $1 \le i, j \le h$. So applying \eqref{e:fourierrelation} with $\Lambda=1$,
we obtain
$$
h a(pT_0)
~=~
h a(T_0) \left(p^{k-\frac{3}{2}}\lambda(p) - p^{k-2}(1+\left( \frac{d}{p} \right))\right).
$$
Since $\lambda(p) = \lambda_0(p) + p^{1/2}+p^{-1/2}$
by case \ref{case:SK} of Proposition \ref{p:lfunctionsfacts},
we obtain the desired result.
\end{proof}

\subsection{A Selberg orthogonality relation for linear combinations of Hecke eigenforms}
The object of this subsection is to prove the following
proposition which may be viewed as an extension of
Lemma \ref{inert-prop} to the case of linear combinations
of eigenforms in $S_k^*(\G_0^{(2)}(N))^\perp$ (including
combinations from different types of Arthur packets).

\begin{prop}\label{prop:selberggen}Let $k$ and $N$ be positive integers. Let $F_1, F_2, \ldots, F_m$ be elements of $S_k^*(\G_0^{(2)}(N))^\perp$ such that each $F_i$ is a Hecke eigenform at all good primes with normalized Hecke eigenvalues $\lambda_i(n)$ for $\gcd(n,N)=1$. Let $r_1, r_2, \ldots, r_m$ be complex numbers such that $\sum_{i=1}^m r_i \neq 0$. Denote $$R = (r_1, r_2, \ldots, r_m; F_1, F_2, \ldots, F_m).$$  For each prime $p$ coprime to $N$, define \[a_R(p) = \sum_{i=1}^m r_i\lambda_i(p).\] Then  exactly one of the following two cases must occur (depending on $R$).

\begin{enumerate}
\item \textbf{Case 1.}  For $a, M$ positive
integers with $\gcd(aN, M) = 1$ and for all sufficiently large $X$, we have
$$
\sum_{p \le X, (p, N)=1 \atop p \equiv a \mod M} \frac{a_R(p)}{p}
~=~ O_{R, M}(1)
\phantom{m}
$$
and
$$
\sum_{p \le X, (p, N)=1 \atop p \equiv a \mod M} \frac{|a_R(p)|^2}{p}
~=~
C_{R,M} \log\log X ~+~ O_{R, M}(1) $$ for some positive real $C_{R,M}$ depending on $R$ and $M$.

Moreover, in this case, under the additional assumption (needed only for $k=2$) that each $F_i$ above that is of general type satisfies the Ramanujan conjecture, we have $|a_R(p)| \ll_R 1$ for all primes $p$.

\item \textbf{Case 2.}  For $a, M$ positive
integers with $\gcd(aN, M) = 1$ and for all sufficiently large $X$, we have
$$
\sum_{p \le X, (p, N)=1 \atop p \equiv a \mod M} \frac{a_R(p)}{p^{3/2}}
~=~ O_{R, M}(1)
\phantom{m}
$$
and
$$
\sum_{p \le X, (p, N)=1 \atop p \equiv a \mod M} \frac{|a_R(p)|^2}{p^2}
~=~
C_{R,M} \log\log X ~+~ O_{R, M}(1) $$ for some positive real $C_{R,M}$ depending on $R$ and $M$.

Moreover, in this case, $|a_R(p)| \ll_R p^{1/2}$ for all primes $p$.
\end{enumerate}
\end{prop}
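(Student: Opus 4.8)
The plan is to split each Hecke eigenvalue $\lambda_i(p)$ into a ``bounded automorphic part'' plus a ``Galois part rescaled by $p^{1/2}+p^{-1/2}$'', and then to show that the entire dichotomy in the statement is decided by whether the combined Galois part vanishes.

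First I would invoke Proposition \ref{p:lfunctionsfacts} to write, for each $i$ and each prime $p\nmid N$,
\[
\lambda_i(p)=\mu_i(p)+(p^{1/2}+p^{-1/2})\,g_i(p),
\]
where $\mu_i$ is the normalized $p$-th Hecke eigenvalue of an isobaric automorphic representation of $\GL_4(\A)$ that is a sum of self-dual unitary cuspidal representations of $\GL_2(\A)$ and/or $\GL_4(\A)$ (tempered, except possibly when $k=2$ and $F_i$ is of general type), with $\mu_i\equiv0$ when $F_i$ is of Soudry or Howe--Piatetski-Shapiro type; and $g_i(p)=\operatorname{tr}\rho_i(\operatorname{Frob}_p)$ for an Artin representation $\rho_i$ of dimension $\le2$, namely a \emph{nontrivial} quadratic character in the Saito--Kurokawa case (nontrivial precisely because $F_i\notin S_k^*(\G_0^{(2)}(N))$), the two-dimensional dihedral representation attached to the weight-one CM newform in the Soudry case, a direct sum of two distinct quadratic characters in the Howe--Piatetski-Shapiro case, and $\rho_i=0$ (so $g_i\equiv0$) in the general and Yoshida cases. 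The key bookkeeping is that the $\GL_4$ Arthur transfer $\Phi_i$ of any $\pi\in\Pi(F_i)$ is the isobaric sum of the unshifted part underlying $\mu_i$ together with $\rho_i\otimes|\cdot|^{1/2}$ and $\rho_i\otimes|\cdot|^{-1/2}$, so that $\dim\mu_i+2\dim\rho_i=4$ for every $i$; moreover all cuspidal representations that occur are self-dual, their conductors are supported on primes dividing $N$, and the ``unshifted'' and ``shifted'' families of cuspidal representations are disjoint (they differ in $\GL_n$-rank or in archimedean weight).

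Next, set $c_R(p)=\sum_i r_i\mu_i(p)$ and $b_R(p)=\sum_i r_i g_i(p)=\operatorname{tr}\rho_R(\operatorname{Frob}_p)$ with $\rho_R:=\sum_i r_i\rho_i$ a virtual Artin representation, so that $a_R(p)=c_R(p)+(p^{1/2}+p^{-1/2})b_R(p)$, and by Chebotarev $b_R(p)=0$ for all $p\nmid N$ if and only if $\rho_R=0$. I would declare Case~1 to be $\rho_R=0$ and Case~2 to be $\rho_R\ne0$. In Case~1 one has $a_R(p)=c_R(p)=\sum_\sigma s_\sigma\lambda_\sigma(p)$ summed over the distinct self-dual cuspidal $\sigma$ that occur, $s_\sigma$ being the total coefficient. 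The crucial point is that \emph{not} all $s_\sigma$ vanish: if they did then $\sum_i r_i[\mu_i]=0$ in the Grothendieck group of automorphic representations, whence $\sum_i r_i\dim\mu_i=0$; together with $\sum_i r_i\dim\rho_i=0$ (from $\rho_R=0$) and the identity $\dim\mu_i+2\dim\rho_i=4$, this forces $4\sum_i r_i=0$, contradicting $\sum_i r_i\ne0$. Granting this, Corollary \ref{cor:selberg} applies (valid since $\gcd(aN,M)=1$ and every $q(\sigma)$ is coprime to $M$), giving $\sum_{p\le X,\,\gcd(p,N)=1,\,p\equiv a\,(\mathrm{mod}\,M)}a_R(p)/p=O_{R,M}(1)$ and $\sum_{p\le X,\,\gcd(p,N)=1,\,p\equiv a\,(\mathrm{mod}\,M)}|a_R(p)|^2/p=\tfrac{1}{\varphi(M)}\bigl(\sum_\sigma|s_\sigma|^2\bigr)\log\log X+O_{R,M}(1)$ with strictly positive leading constant; the bound $|a_R(p)|\ll_R1$ is then just temperedness of the $\sigma$'s, which is where (and the only place where) the Ramanujan hypothesis is needed.

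In Case~2, write $b_R(p)=\sum_\tau t_\tau\lambda_\tau(p)$ over the distinct irreducible constituents of $\rho_R$, each a self-dual automorphic representation of $\GL_1(\A)$ or $\GL_2(\A)$, with $\sum_\tau|t_\tau|^2=\langle\rho_R,\rho_R\rangle>0$. Characters and dihedral weight-one forms being tempered gives $|b_R(p)|\ll_R1$, while $|c_R(p)|\ll_R p^{9/22}$ by the unconditional Kim--Sarnak bound, so $|a_R(p)|\ll_R p^{1/2}$. Using $(p^{1/2}+p^{-1/2})^2=p+2+p^{-1}$ one gets $|a_R(p)|^2/p^2=|b_R(p)|^2/p+O_R(p^{-1-\delta})$ and $a_R(p)/p^{3/2}=(p^{-1}+p^{-2})b_R(p)+O_R(p^{-1-\delta})$ for some $\delta>0$, and then Corollary \ref{cor:selberg} applied to the $\tau$-expansion delivers $\sum_{p\equiv a\,(\mathrm{mod}\,M)}|a_R(p)|^2/p^2=\tfrac{\langle\rho_R,\rho_R\rangle}{\varphi(M)}\log\log X+O_{R,M}(1)$ (positive leading term) and $\sum_{p\equiv a\,(\mathrm{mod}\,M)}a_R(p)/p^{3/2}=O_{R,M}(1)$. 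The main obstacle, and the only genuinely clever step, is the dimension count in Case~1 ruling out all $s_\sigma=0$: this is exactly where the hypothesis $\sum_i r_i\ne0$ enters, and it is the sole reason $C_{R,M}>0$. The rest is routine but must be executed with care: verifying self-duality of every representation in sight (so that Proposition \ref{KLW13} applies in its clean form), checking that all conductors are supported at primes dividing $N$ (so that Corollary \ref{cor:selberg} is available in the prescribed progression, using $\gcd(aN,M)=1$), and treating separately — via Mertens' theorem rather than Corollary \ref{cor:selberg} — the possible appearance of the trivial character among the constituents of $\rho_R$, which can happen only through a Howe--Piatetski-Shapiro summand $F_i$ one of whose two quadratic characters is trivial.
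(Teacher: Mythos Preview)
Your proof is correct and follows essentially the same route as the paper's. Both arguments decompose each $\lambda_i(p)$ as a linear combination of Hecke eigenvalues of distinct self-dual unitary cuspidal representations of $\GL_1$, $\GL_2$, $\GL_4$, with coefficients equal to either $1$ or $p^{1/2}+p^{-1/2}$, then define the dichotomy according to whether the total coefficient on the $(p^{1/2}+p^{-1/2})$-part vanishes, and finally appeal to Selberg orthogonality (Proposition~\ref{KLW13} and Corollary~\ref{cor:selberg}) to obtain the asymptotics. Your splitting $a_R(p)=c_R(p)+(p^{1/2}+p^{-1/2})b_R(p)$ is literally the paper's splitting of $\sum_j R^{(j)}(p)\mu_j(p)$ into unshifted and shifted pieces.

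The one place where the presentations differ is the nonvanishing step in Case~1. The paper constructs an explicit vector $V$ with entries $1$, $\tfrac12$, $\tfrac{1}{p^{1/2}+p^{-1/2}}$, $\tfrac{1}{2(p^{1/2}+p^{-1/2})}$ according to the type of each $\pi_j$, checks that $CV=(1,\ldots,1)^t$, and concludes $(R^{(j)})_jV=\sum_i r_i\ne0$. Your dimension identity $\dim\mu_i+2\dim\rho_i=4$ (correct in each of the five Arthur types) is an equivalent repackaging: it plays the same role as $CV=(1,\ldots,1)^t$, and the contradiction $4\sum r_i=0$ is the same as the paper's $(R^{(j)})_jV\ne0$. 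Your formulation is arguably a bit more conceptual.

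One caution: your remark about the trivial character possibly appearing through a Howe--Piatetski-Shapiro summand is well spotted, but ``handling it via Mertens'' does not salvage the $O_{R,M}(1)$ bound on $\sum a_R(p)/p^{3/2}$ in Case~2 --- Mertens gives $\log\log X$, not $O(1)$. The paper deals with this by asserting that none of the $\pi_j$ is the trivial representation; you would need to argue the same (or accept the paper's assertion) rather than claim a separate treatment.
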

\begin{proof}
According to Proposition \ref{p:lfunctionsfacts}, a Hecke eigenform $F \in S_k^*(\G_0^{(2)}(N))^\perp$ at all good primes gives rise  to a certain finite set $\Sigma(F)$ of unitary cuspidal automorphic self-dual irreducible representations of $\GL_1$, $\GL_2$ or $\GL_4$ such that the Hecke eigenvalues of $F$ are related to those of the representations in $\Sigma(F)$ as follows:

\begin{itemize}

\item If $F$ is of general type, then we take $\Sigma(F) = \{\Pi\}$ where $\Pi$ is as in Case 1 of Proposition \ref{p:lfunctionsfacts}. In this case we trivially have $\lambda_F(p) = \lambda_\Pi(p)$.

\item If $F$ is of Yoshida type, then we take $\Sigma(F) = \{\pi_1, \pi_2\}$ where $\pi_1, \pi_2$ are as in Case 2 of Proposition \ref{p:lfunctionsfacts}. In this case we have $\lambda_F(p) =  \lambda_{\pi_1}(p) + \lambda_{\pi_2}(p)$.

\item If $F$ is of Saito-Kurokawa type, then we take $\Sigma(F) = \{\pi_0, \chi_0\}$ where $\pi_0, \chi_0$ are as in Case 3 of Proposition \ref{p:lfunctionsfacts}. In this case we have $\lambda_F(p) =  \lambda_{\pi_0}(p) + (p^{1/2} + p^{-1/2} )\lambda_{\chi_0}(p)$.

\item If $F$ is of Soudry type, then we take $\Sigma(F) = \{\pi_0\}$ where $\pi_0$ is as in Case 4 of Proposition \ref{p:lfunctionsfacts}. In this case we have $\lambda_F(p) =  (p^{1/2} + p^{-1/2} ) \lambda_{\pi_0}(p)$.

\item If $F$ is of Howe--Piatetski-Shapiro type, then we take $\Sigma(F) = \{\chi_1, \chi_2\}$ where $\chi_1, \chi_2$ are as in Case 5 of Proposition \ref{p:lfunctionsfacts}. In this case we have $\lambda_F(p) =  (p^{1/2} + p^{-1/2} ) \left(\lambda_{\chi_1}(p) + \lambda_{\chi_2}(p) \right)$.

\end{itemize}

 Let $\{\pi_j\}_{1\le j \le n}$ denote the set of all unitary cuspidal automorphic self-dual irreducible representations of $\GL_1$, $\GL_2$ or $\GL_4$ that occur among the elements of $\Sigma(F)$ as $F\in S_k^*(\G_0^{(2)}(N))^\perp$ varies over Hecke eigenforms at all good primes. Note that none of the $\pi_j$ equals the trivial representation. Let $\mu_j(p)$ denote the normalized Hecke eigenvalue of $\pi_j$ at a prime $p$. For $1\le i \le m$, $1 \le j \le n$ and primes $p \nmid N$ we set $c_i^{(j)}(p) = 0$ if $\pi_j \notin \Sigma(F_i)$ and when $\pi_j \in \Sigma(F_i)$ we set $c_i^{(j)}(p)$ equal to either 1 or $(p^{1/2} + p^{-1/2})$ (depending on the coefficient of the Hecke eigenvalue as explicated above) so that for all $1\le i \le m$ and all $p \nmid N$ we have \begin{equation}\label{e:deflambdai}\lambda_i(p) = \sum_{j=1}^n c_i^{(j)}(p) \mu_j(p).\end{equation}

For $1\le j \le n$, set
\begin{equation}\label{e:defRj}
R^{(j)}(p) = \sum_{i=1}^m r_i c_i^{(j)}(p).
\end{equation}
This shows that
\begin{equation}\label{e:rjform}R^{(j)}(p) = D_j + E_j(p^{1/2}+p^{-1/2}),\end{equation} where $D_j, E_j$ are
complex numbers and independent of $p$.

Using \eqref{e:deflambdai}, \eqref{e:defRj} and the definition of $a_R(p)$, we see that

\begin{equation}\label{e:AR1}
a_R(p) =  \sum_{j=1}^n R^{(j)}(p) \mu_j(p)
\end{equation}
\begin{equation}\label{e:AR2}
|a_R(p)|^2 = \sum_{j=1}^n |R^{(j)}(p)|^2 \mu_j(p)^2
+
\sum_{1\le j_1 \neq j_2 \le n} R^{(j_1)}(p)\overline{R^{(j_2)}(p)} \mu_{j_1}(p)\mu_{j_2}(p).
\end{equation}

We now claim that the function $ p \mapsto R^{(j)}(p)$ cannot be
identically 0 for all $j$.
Indeed, set $C$ to equal the $m \times n$ matrix $(c_i^{(j)}(p))_{i,j}$
and let $V = (v_j)_{1\le j \le n}$ equal the column matrix  defined by
$$
v_j = \begin{cases}
1 & \text{ if } \pi_j \text{ is a $\GL_4$ representation } \\
1/2 & \text{ if } \pi_j \text{ is a $\GL_2$ representation of weight $\ge $ 2 } \\
\frac{1}{p^{1/2} + p^{-1/2}} & \text{ if } \pi_j \text{ is a $\GL_2$ representation of weight  1 } \\
\frac{1}{2(p^{1/2} + p^{-1/2})} & \text{ if } \pi_j \text{ is a $\GL_1$ character. }
\end{cases}
$$
Then it is easy to check that $$C V = (1,1, \ldots, 1)^t.$$
Consequently, we have \begin{align*}(R^{(1)}(p), \ R^{(2)}(p), \ \ldots, R^{(n)}(p)) \ V & = (r_1, \ r_2, \ \ldots, r_m) CV \\&= r_1 + r_2 + \ldots r_m \\& \neq 0\end{align*} thus showing that the claim is true.
Now that we have proved that $R^{(j)}(p)$ cannot be identically 0, using the definition of $R^{(j)}(p)$ we see that there are two cases.

In the first case (which corresponds to Case 1 of the statement of the proposition) none of the $R^{(j)}(p)$ actually depend on $p$; i.e, in the notation of \eqref{e:rjform}, each $E_j=0$.  In this case, $R^{(j)}(p) \ll_R 1$ for all $1\le j \le n$ and $p$, and furthermore there exists some $j_0$ such that $R^{(j_0)}(p) \neq 0$. Therefore using \eqref{e:AR1}, \eqref{e:AR2} and Corollary \ref{cor:selberg}, we conclude the proof. Note that if each $\pi_j$ satisfies the Ramanujan conjecture, then we have $|\mu_j(p)| \ll 1$ which gives the required bound on $a_R(p)$ from \eqref{e:AR2}.

In the second case (which corresponds to Case 2 of the statement of the proposition) at least one of the $R^{(j)}(p)$ depends on $p$. In this case, using \eqref{e:rjform}, we see that  $R^{(j)}(p) \ll_R p^{1/2}$ for all $1\le j \le n$, and furthermore there exists some $j_0$ such that $R^{(j_0)}(p) \asymp_R p^{1/2}$.  Again using \eqref{e:AR1}, \eqref{e:AR2} and Corollary \ref{cor:selberg}, we conclude the proof.
\end{proof}

\subsection{Sign changes for forms orthogonal to classical Saito-Kurokawa lifts}

\begin{prop}\label{F-o-Maass}
Let $k$ and $N$ be positive integers. Let $F \in S_k^*(\G_0^{(2)}(N))^\perp$ be a non-zero cusp form having
real Fourier coefficients $a(T)$ and suppose that $a(T_0) \ne 0$ for some
fundamental $T_0 \in \n_2$ with $\gcd(4\det(T_0), N)=1$. If $k=2$, assume that each element of $S_k(\G_0^{(2)}(N))$ that is of general type satisfies the Ramanujan conjecture. Then the
sequence $\{ a(pT_0) \}_{p \text{ inert in } \Q(\sqrt{-4{\rm det}~T_0})}$
changes sign infinitely often.
\end{prop}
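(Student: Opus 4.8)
The strategy extends that of \corref{F-Hecke}: decompose $F$ into Hecke eigencomponents, convert the radial Fourier coefficients into Hecke eigenvalues along a progression of inert primes via \propref{F-H relation}, and feed the result into the dichotomy of \propref{prop:selberggen}. Concretely, the operators $T(m)$ with $\gcd(m,N)=1$ commute and are self-adjoint on the finite-dimensional space $S_k(\G_0^{(2)}(N))$, and the subspace $S_k^*(\G_0^{(2)}(N))^\perp$ is stable under all of them (being the orthogonal complement of a span of eigenforms); hence it is an orthogonal direct sum of joint eigenspaces, and I would write $F = \sum_{j=1}^m G_j$ for the corresponding decomposition of $F$. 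Each non-zero $G_j$ is then a Hecke eigenform at all good primes lying in $S_k^*(\G_0^{(2)}(N))^\perp$, with normalised Hecke eigenvalues $\lambda_j(p)$ and unnormalised eigenvalues $\eta_j(p)=p^{k-3/2}\lambda_j(p)$. Set $r_j := a_{G_j}(T_0)$; comparing Fourier expansions gives $\sum_{j=1}^m r_j = a(T_0)\neq 0$.

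Now let $d := -4\det(T_0)$, a (negative) fundamental discriminant with $|d| = 4\det(T_0)$, and $K = \Q(\sqrt{d})$; by hypothesis $\gcd(|d|,N)=1$. Since $\chi_d$ is a non-trivial quadratic character modulo $|d|$, fix $a_0$ with $\gcd(a_0,|d|)=1$ and $\chi_d(a_0)=-1$, so that every prime $p\equiv a_0\pmod{|d|}$ is inert in $K$. Discarding the indices $j$ with $r_j=0$ (which leaves a non-empty collection, as $\sum_j r_j = a(T_0)\neq 0$) produces an admissible tuple $R = (r_j; G_j)_j$ for \propref{prop:selberggen}; and for any prime $p\equiv a_0\pmod{|d|}$ with $\gcd(p,N)=1$, applying \propref{F-H relation} to each $G_j$ yields $a_{G_j}(pT_0)=r_j\,\eta_j(p)$, whence
\[
a(pT_0) \;=\; \sum_{j} r_j\,\eta_j(p) \;=\; p^{k-3/2}\,a_R(p), \qquad a_R(p) \;:=\; \sum_{j} r_j\,\lambda_j(p).
\]
Since $a(pT_0)\in\R$ and $p^{k-3/2}>0$, the number $a_R(p)$ is real and has the same sign as $a(pT_0)$.

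Finally, because $\gcd(a_0 N,|d|)=1$, \propref{prop:selberggen} applies with $a=a_0$, $M=|d|$; when $k=2$ I use the standing hypothesis that the components $G_j$ of general type, being elements of $S_k(\G_0^{(2)}(N))$ of general type, satisfy the Ramanujan conjecture. Suppose toward a contradiction that $a(pT_0)\ge 0$ for all sufficiently large primes $p$ inert in $K$ — the case of an eventual non-positive sign is symmetric, replacing $R$ by $-R$. Then $a_R(p)\ge 0$ for all $p\ge P_0$ in our progression with $\gcd(p,N)=1$. In Case $1$ of \propref{prop:selberggen} one has $|a_R(p)|\ll_R 1$, so $a_R(p)^2\ll_R a_R(p)$ for such $p$, and hence, absorbing the finitely many $p<P_0$ into the error term,
\[
C_{R,|d|}\log\log X + O(1) \;=\; \sum_{\substack{p\le X,\ (p,N)=1\\ p\equiv a_0\ (|d|)}}\frac{a_R(p)^2}{p} \;\ll_R\; \sum_{\substack{p\le X,\ (p,N)=1\\ p\equiv a_0\ (|d|)}}\frac{a_R(p)}{p} \;+\; O_R(1) \;=\; O_R(1),
\]
which is absurd. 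In Case $2$ one has instead $|a_R(p)|\ll_R p^{1/2}$, hence $a_R(p)^2\ll_R p^{1/2}a_R(p)$, and the same computation with the sums $\sum_p \frac{a_R(p)}{p^{3/2}}$ and $\sum_p \frac{a_R(p)^2}{p^2}$ furnished by \propref{prop:selberggen} gives the analogous contradiction. Therefore $a(pT_0)>0$ for infinitely many primes $p$ inert in $K$ and $a(pT_0)<0$ for infinitely many primes $p$ inert in $K$, which is the assertion. The crux of the argument is \propref{prop:selberggen}, whose two-case asymptotic is exactly what rules out an eventual sign; the other steps (the Hecke decomposition of $F$, the fact that the eigencomponents stay in $S_k^*(\G_0^{(2)}(N))^\perp$, and the identity $\sum_j r_j = a(T_0)$) are routine, and the forced restriction to inert primes — the only regime in which \propref{F-H relation} gives the clean relation $a(pT_0)=a(T_0)\eta(p)$ — costs nothing, since the inert primes contain an arithmetic progression coprime to $N$, which is all \propref{prop:selberggen} demands.
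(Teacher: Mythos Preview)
Your proof is correct and follows essentially the same approach as the paper: decompose $F$ into Hecke eigencomponents lying in $S_k^*(\G_0^{(2)}(N))^\perp$, apply \propref{F-H relation} along an arithmetic progression of inert primes to write $a(pT_0)=p^{k-3/2}a_R(p)$, and invoke the two-case dichotomy of \propref{prop:selberggen} to contradict an eventual constant sign. The only cosmetic difference is in how the contradiction is extracted: the paper uses the interval device $y=X^\delta$ (so that $\sum_{y\le p\le X}|a_R(p)|^2/p \asymp \log(1/\delta)$ is unbounded as $\delta\to 0$), whereas you compare the full sums $\sum_{p\le X}$ directly; the two are equivalent.
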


\begin{proof}
Let us write $F$ as a finite sum
$$
F = \sum_{i=1}^m F_i,
$$
where  $F_i$'s are Hecke eigenforms at all good places having normalized Hecke eigenvalues
$\lambda_i(m)$ (for $m \ge 1, (m,N)=1$) and Fourier coefficients $a_i(T) ~ (T \in \n_2)$.
Let us fix a fundamental $T_0 \in \n_2$ such that $a(T_0) \neq 0$ and put $K := \Q(\sqrt{-4~{\rm det}~ T_0})$
and let $P_{K, -}$ denote the set of primes inert in $K$. Put $M:=4\det(T_0)$. Fix some $a$ such that  $\gcd(aN, M)=1$ and such that each prime $p \equiv a \mod M$ lies in  $P_{K, -}$. It suffices to show that the sequence $\{ a(pT_0) \}_{p \equiv a \mod M}$
changes sign infinitely often.

Put $r_i = a_i(T_0)$. So $\sum_{i=1}^m r_i = a(T_0)\neq 0$. Following the  notation of Proposition \ref{prop:selberggen}, define $a_R(p) = \sum_{i=1}^m r_i\lambda_i(p)$ and let $R = (r_1, r_2, \ldots, r_m; F_1, F_2, \ldots, F_m)$; note that $R$ depends only on $F$ and $T_0$.

Using Proposition \ref{F-H relation} we have for each $p \equiv a \mod M$,
\[\frac{a(pT_0)}{p^{k-1/2}} = \frac{a_R(p)}{p}, \quad \frac{|a(pT_0)|^2}{p^{2k-2}} = \frac{|a_R(p)|^2}{p}\]

We now consider two cases according to which case of Proposition \ref{prop:selberggen} we find ourselves in.

In \textbf{Case 1}, there is a constant  $C_1$ depending on $F$ and $T_0$ such that for each $p \equiv a \mod M$ $$
\frac{|a(pT_0)|^2}{p^{2k-2}} = \frac{|a_R(p)|^2}{p}
~\le~
C_1
\frac{|a_R(p)|}{p} = C_1 \left|\frac{a(pT_0)}{p^{k-1/2}}\right|,
$$ and for all large $X$ using Proposition \ref{prop:selberggen} we have
\begin{equation}\label{F-one}
\sum_{p \le X \atop p\equiv a \mod M} \frac{a(pT_0)}{p^{k-1/2}}
~=~
\sum_{p \le X \atop p\equiv a \mod M} \frac{a_R(p)}{p}
~\ll_{F, T_0}~ 1,
\end{equation}

\begin{eqnarray}\label{F-two}
\sum_{p \le X \atop p\equiv a \mod M} \frac{|a(pT_0)|^2}{p^{2k-2}}
~=~
\sum_{p \le X \atop p\equiv a \mod M} \frac{|a_R(p)|^2}{p}
= C_{F, T_0} \log \log X + O_{F,T_0}(1).
\end{eqnarray}
Now for any $0 < \delta < 1$, let $y = X^\delta$.
Assume that $a(pT_0)$ has the same sign $\delta \in \{1, -1\}$ for any
prime $ y \le p \le X$ and $p \equiv a \mod M$.
Therefore \eqref{F-one} and \eqref{F-two} imply
$$
C_{F, T_0} \log\left( \frac{\log X}{\log y} \right) ~+~ O_{F, T_0}(1)
~=~
\sum_{y \le p \le X \atop p\equiv a \mod M} \frac{|a(pT_0)|^2}{p^{2k-2}}
~\le~
C_1 \delta \sum_{y \le p \le X \atop  p\equiv a \mod M} \frac{a(pT_0)}{p^{k-1/2}}
~=~
O_{F,T_0}(1).
$$
This is absurd when $\delta > 0$ is sufficiently small. Hence we are done.

In \textbf{Case 2}, for each $p \equiv a \mod M$ there is a constant  $C_1$ such that $$
\frac{|a(pT_0)|^2}{p^{2k-1}} = \frac{|a_R(p)|^2}{p^2}
~\le~
C_1
\frac{|a_R(p)|}{p^{3/2}} = C_1 \left|\frac{a(pT_0)}{p^{k}}\right|,
$$ and for all large $X$ using Proposition \ref{prop:selberggen} we have
\begin{equation}\label{F-one2}
\sum_{p \le X \atop p\equiv a \mod M} \frac{a(pT_0)}{p^{k}}
~=~
\sum_{p \le X \atop p\equiv a \mod M} \frac{a_R(p)}{p^{3/2}}
~\ll_{F, T_0}~ 1,
\end{equation}

\begin{eqnarray}\label{F-two2}
\sum_{p \le X \atop p\equiv a \mod M} \frac{|a(pT_0)|^2}{p^{2k-1}}
~=~
\sum_{p \le X \atop p\equiv a \mod M} \frac{|a_R(p)|^2}{p^2}
= C_{F, T_0} \log \log X + O_{F,T_0}(1).
\end{eqnarray}
Now for any $0 < \delta < 1$, let $y = X^\delta$.
Assume that $a(pT_0)$ has the same sign $\delta \in \{1, -1\}$ for any
prime $ y \le p \le X$ and $p \equiv a \mod M$.
Therefore \eqref{F-one2} and \eqref{F-two2} imply
$$
C_{F, T_0} \log\left( \frac{\log X}{\log y} \right) ~+~ O_{F, T_0}(1)
~=~
\sum_{y \le p \le X \atop p\equiv a \mod M} \frac{|a(pT_0)|^2}{p^{2k-1}}
~\le~
C_1 \delta \sum_{y \le p \le X \atop  p\equiv a \mod M} \frac{a(pT_0)}{p^{k}}
~=~
O_{F,T_0}(1).
$$
This is absurd when $\delta > 0$ is sufficiently small. Hence we are done.
\end{proof}

\subsection{Sign changes for forms in the generalized Maass subspace}
The next result together with
\propref{F-o-Maass} completes the proof of Theorem \ref{t:fcsignchange}.

\begin{prop}\label{Maass}
Let $F \in S_k^*(\G_0^{(2)}(N))$ be a non-zero cusp form having real
Fourier coefficients $a(T)$ and $a(T_0) \ne 0$ for some fundamental
$T_0$. Then there exists a constant $C > 0$ such that the quantities
$a(pT_0)$ have the same sign for all primes $p > C$.
\end{prop}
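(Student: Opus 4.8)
The plan is to reduce the statement to \lemref{Kohnen} together with the Ramanujan--Deligne bound for classical holomorphic cusp forms. First I would use the definition of the generalized Maass subspace to write $F = \sum_{i=1}^m F_i$, where each $F_i$ is a non-zero Hecke eigenform at all good primes of \emph{classical} Saito--Kurokawa type (in particular $k \ge 2$, since otherwise $S_k^*(\G_0^{(2)}(N)) = 0$). Let $a_i(T)$ be the Fourier coefficients of $F_i$, so that $a(T) = \sum_{i=1}^m a_i(T)$, and let $\pi_{0,i}$ be the representation of $\GL_2(\A)$ attached to $F_i$ as in case~\ref{case:SK} of \propref{p:lfunctionsfacts}, with normalized Hecke eigenvalue $\lambda_{0,i}(p)$; since $F_i$ is of \emph{classical} Saito--Kurokawa type, the character $\chi_0$ occurring there is trivial. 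Put $r_i := a_i(T_0)$, so that $\sum_{i=1}^m r_i = a(T_0) \neq 0$, write $S := \sum_{i=1}^m |r_i|$, and set $d := -4\det(T_0)$.

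Next, for every prime $p$ with $\gcd(p,N)=1$, applying \lemref{Kohnen} to each $F_i$ gives $a_i(pT_0) = r_i\, p^{k-1}\bigl(1 - \bigl(\tfrac{d}{p}\bigr)\tfrac1p + p^{-1/2}\lambda_{0,i}(p)\bigr)$. Summing over $i$ yields
$$
a(pT_0) = p^{k-1}\Bigl( a(T_0)\bigl(1 - \bigl(\tfrac{d}{p}\bigr)\tfrac1p\bigr) + p^{-1/2}\sum_{i=1}^m r_i\,\lambda_{0,i}(p)\Bigr) =: p^{k-1}\bigl(a(T_0) + E(p)\bigr).
$$
Since each $\pi_{0,i}$ corresponds to a holomorphic cuspidal newform of weight $2k-2 \ge 2$, Deligne's bound gives $|\lambda_{0,i}(p)| \le 2$, and therefore $|E(p)| \le (|a(T_0)| + 2S)\, p^{-1/2}$ for every $p \ge 1$.

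Finally, because $a(T_0) \neq 0$, I would take $C := \max\bigl(N,\ (1 + 2S/|a(T_0)|)^2\bigr)$; then for every prime $p > C$ we have $\gcd(p,N)=1$ and $|E(p)| < |a(T_0)|$, so $a(T_0) + E(p)$ has the same sign as $a(T_0)$, and since $p^{k-1} > 0$ it follows that $a(pT_0)$ has this same fixed sign for all primes $p > C$. This argument presents no genuine obstacle; the only points deserving attention are that \lemref{Kohnen} is available only for $p \nmid N$ (hence the $N$ in the definition of $C$), and that although $F$ itself need not be a Hecke eigenform, each component $F_i$ contributes a radial coefficient of the \emph{same} shape $r_i\, p^{k-1}\bigl(1 + O(p^{-1/2})\bigr)$, so the error terms --- individually of size $O_{F,T_0}(p^{k-3/2})$ --- are all dominated by the common main term $a(T_0)\, p^{k-1}$. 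In contrast to the earlier results of the paper, no unproved hypothesis (e.g.\ the Ramanujan conjecture for $\GSp_4$) is needed here, since only the known Ramanujan bound for holomorphic forms on $\GL_2$ enters.
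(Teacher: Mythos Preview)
Your proof is correct and follows essentially the same approach as the paper's own proof: decompose $F$ into eigenforms $F_i$ in the generalized Maass subspace, apply \lemref{Kohnen} to each $F_i$, sum, and use the Ramanujan--Deligne bound $|\lambda_{0,i}(p)| \le 2$ to see that $a(pT_0) = a(T_0)\,p^{k-1} + O_{F,T_0}(p^{k-3/2})$. Your version is slightly more explicit (you write down a concrete $C$ and take care to restrict to $p \nmid N$), but the substance is the same.
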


\noindent
{\bf Proof of \propref{Maass}.}
Since $F \in  S_k^*(\G_0^{(2)}(N))$, there exist a set of
Hecke eigenforms (at all good places)
$F_1, \dots, F_\ell \in S_k^*(\G_0^{(2)}(N))$ so that
\begin{equation*}
F = \sum_{i = 1}^\ell  F_i.
\end{equation*}
We let $a_i(T)$ denote the Fourier coefficient of $F_i$. Then for any fundamental $T_0 \in \n_2$
with $a(T_0) \ne 0$ and for any prime $p$ \lemref{Kohnen} gives
$$
a_i(pT_0)
~=~ a_i(T_0)p^{k-1}
\left[ 1 - \left( \frac{d}{p} \right)\frac{1}{p}
~+~
\frac{\lambda_{0,i}(p)}{p^{1/2}} \right],
$$
where $d := -4{\rm det}~T_0$. Hence we have
\begin{eqnarray*}
a(pT_0)
& = &
\sum_{i = 1}^\ell a_i(pT_0)
\\
& = &
\sum_{i = 1}^\ell  a_i(T_0) p^{k-1}
\left[ 1 - \left( \frac{d}{p} \right)\frac{1}{p}
~+~
\frac{\lambda_{0,i}(p)}{p^{1/2}} \right]
\\
& = &
a(T_0) p^{k-1} ~+~ O_{F, T_0}\left( p^{k-3/2} \right).
\end{eqnarray*}
Here in the last line we used $\lambda_{0,i}(p) \ll 1$. The above
shows that for all large $p$, $a(pT_0)$ has the same sign as
$a(T_0)$ and this completes the proof.
\qed

\section{Bounds for Fourier coefficients of cusp forms in the Maass subspace}
In this section, we will prove \thmref{DK-SK}. We give two proofs, using very different methods.
For the first proof, carried out in Section \ref{S:SKbessel}, we build upon some calculations due to Pitale--Schmidt \cite{ps14} to obtain key bounds (see Proposition \ref{p:SKlocal}) quantifying the growth of the local Bessel functions  associated  to certain fixed vectors in representations of $\GSp_4(\Q_p)$. This relates the sizes of general Fourier coefficients of forms of Saito--Kurokawa type to the sizes of fundamental Fourier coefficients, and in the latter case the desired bounds easily follow from bounds for central values of $L$-functions. Using the above method, we are able to prove a refined form of Theorem \ref{DK-SK} in Theorem \ref{t:SKRS} below.

For the second proof, which is classical and carried out in Section \ref{s:SKClass}, we restrict ourselves to Saito-Kurokawa lifts of newforms of squarefree levels; see the statement of Proposition \ref{p:SKClass}. To prove Proposition \ref{p:SKClass}, we  follow a technique
of Kohnen \cite{WK92, WK93} together with recent developments in the
theory of Fourier--Jacobi expansion.
\subsection{Bounds for Fourier coefficients using local Bessel functions}\label{S:SKbessel}

For $S = \mat{a}{b/2}{b/2}{c}\in \n_2$ we denote $L_S:=\gcd(a,b,c)$, $\disc(S):=-4\det(S) = b^2 - 4ac$; we call $L_S$ the content of $S$ and call $\disc(S)$ the discriminant of $S$. For any $S = \mat{a}{b/2}{b/2}{c}\in \n_2$,  the non-zero integer $\disc(S)$ factors as follows:
\[\disc(S) = d_SL_S^2M_S^2\] where $d_S<0$ is a fundamental discriminant, and $M_S$ is a positive integer. Note that $S$ is fundamental if and only if $L_S=M_S=1$ in the above factorization. Moreover, any $S \in \n_2$ is $\SL_2(\Z)$-equivalent to a matrix \begin{equation}\label{e:nonfunddecomp}S' :=\mat{L_S}{}{}{L_S}\mat{M_S}{}{}{1}S_c\mat{M_S}{}{}{1}
\end{equation}
where $S_c$ is a fundamental matrix of discriminant $d_S$; see Proposition 5.3 of \cite{pssmb}.

It is clear that the fundamental discriminant $d_S$ is uniquely determined by $\disc(S)$, and therefore the pair of integers $(\disc(S), L_S)$ determine $d_S$ and $M_S$. A key distinguishing property of forms $F$ in $S_k^*(\G_0^{(2)}(N))$ --- which does not hold in general for $F \notin S_k^*(\G_0^{(2)}(N))$ --- is that a Fourier coefficient $a(S)$ of $F$ only depends on $\disc(S)$ and $L_S$ (or equivalently, only on $d_S$, $L_S$ and $M_S$).

\begin{lem}\label{l:SKkey}Let $F \in S_k^*(\G_0^{(2)}(N))$ be a non-zero cusp form with Fourier coefficients $a(T)$. Suppose that $S_1, S_2 \in \n_2$ with $\disc(S_1) = \disc(S_2)$ and $L_{S_1}=L_{S_2}$. Then $a(S_1)=a(S_2)$.
\end{lem}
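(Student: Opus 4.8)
The plan is to reduce to the case of a single Hecke eigenform and then extract the statement from the theory of Bessel periods, in the spirit of \cite{marzec21}. Since the Fourier coefficients $a(T)$ depend linearly on $F$ and $S_k^*(\G_0^{(2)}(N))$ is by definition spanned by Hecke eigenforms at all good primes that are of classical Saito--Kurokawa type, I may assume that $F$ is such an eigenform; fix $\pi \in \Pi(F)$ with Satake parameters $\alpha_{p,i}$ at primes $p \nmid N$. Write $\Delta := \disc(S_1) = \disc(S_2)$ and $L := L_{S_1} = L_{S_2}$. Since $\Delta$ determines the fundamental discriminant $d := d_{S_1} = d_{S_2}$, and then $M_{S_i}^2 = \Delta/(dL^2)$, we also get $M := M_{S_1} = M_{S_2}$. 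Put $K = \Q(\sqrt d)$, let $h = |\Cl_d|$, and let $N_1, \dots, N_h \in \n_2$ be a complete set of representatives of the $\SL_2(\Z)$-equivalence classes of fundamental matrices of discriminant $d$, identified with $\Cl_d$ as in the proof of \propref{F-H relation}.

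Next I would use the $\SL_2(\Z)$-invariance of Fourier coefficients: the Fourier coefficients of any element of $S_k(\G_0^{(2)}(N))$ are invariant under $T \mapsto U^t T U$ for $U \in \SL_2(\Z)$, which is immediate from \eqref{siegeldefiningrel} applied to $\mat{U}{0}{0}{(U^t)^{-1}} \in \G_0^{(2)}(N)$ together with comparison of Fourier expansions. Combining this with the decomposition \eqref{e:nonfunddecomp} (Proposition 5.3 of \cite{pssmb}), each $a(S_i)$ equals $a\big(L\mat{M}{}{}{1}S_{i,c}\mat{M}{}{}{1}\big)$ for a fundamental matrix $S_{i,c}$ of discriminant $d$ with $\mat{M}{}{}{1}S_{i,c}\mat{M}{}{}{1}$ primitive. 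Hence it suffices to show that $a\big(L\mat{M}{}{}{1}S_c\mat{M}{}{}{1}\big)$ does not depend on the fundamental matrix $S_c$ of discriminant $d$ (subject to $\mat{M}{}{}{1}S_c\mat{M}{}{}{1}$ being primitive); using $\SL_2(\Z)$-invariance once more one may take $S_c$ among the chosen representatives $N_j$, so the goal becomes: the quantity $a\big(L\mat{M}{}{}{1}N_j\mat{M}{}{}{1}\big)$ is independent of $j \in \{1,\dots,h\}$.

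The heart of the argument is an orthogonality relation over $\Cl_d$. For each character $\Lambda$ of $\Cl_d$, the generalization of the Bessel-model identity \eqref{e:fourierrelation} provided by Corollary 2 of \cite{marzec21} --- applied prime by prime to the factorizations $L = \prod_p p^{\ell_p}$, $M = \prod_p p^{m_p}$ and multiplied together --- yields a constant $c(\Lambda)$ depending only on $\pi$, $K$, $L$, $M$, $\Lambda$ (it is a product over $p \mid LM$ of normalized values $U^{\ell_p,m_p}_p(\alpha_{p,1},\alpha_{p,3};K,\Lambda)$ times a suitable power of $LM$) such that
\[
\sum_{j=1}^h \Lambda(N_j)\, a\!\left(L\mat{M}{}{}{1}N_j\mat{M}{}{}{1}\right) ~=~ c(\Lambda)\sum_{j=1}^h \Lambda(N_j)\, a(N_j).
\]
Now I invoke the base case $a(N_1) = \dots = a(N_h)$ --- the special case of Lemma 9 of \cite{marzec21} (equivalently Theorem 5.1 of \cite{pssmb}) already used in the proof of \lemref{Kohnen}. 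For every nontrivial $\Lambda$ this gives $\sum_j \Lambda(N_j)\, a(N_j) = a(N_1)\sum_j \Lambda(N_j) = 0$ by orthogonality of characters on $\Cl_d$, so the left-hand side above vanishes for all $\Lambda \neq 1$. Therefore the function $j \mapsto a\big(L\mat{M}{}{}{1}N_j\mat{M}{}{}{1}\big)$ on $\Cl_d$ has trivial Fourier transform away from the trivial character, hence is constant, which is exactly what is needed.

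The main obstacle I anticipate is the matrix bookkeeping in the orthogonality step: one must verify that the matrices $L\mat{M}{}{}{1}N_j\mat{M}{}{}{1}$ produced by \eqref{e:nonfunddecomp} are precisely of the shape to which Corollary 2 of \cite{marzec21} applies, that the relevant local Bessel factors are genuinely multiplicative across the primes dividing $LM$ (in particular at primes dividing $N$, where either the ramified computations of \cite{marzec21} must be invoked or a separate argument given), and that the resulting constant $c(\Lambda)$ is indeed independent of $j$. The one non-formal input is the equality $a(N_i) = a(N_j)$ for fundamental matrices of the same discriminant, which I would cite rather than reprove; granting it, the orthogonality argument is routine.
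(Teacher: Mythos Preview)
Your approach is essentially the paper's own: reduce to an eigenform (the paper reduces slightly further, to an $F$ generating an irreducible representation) and then invoke \cite{marzec21} together with the decomposition \eqref{e:nonfunddecomp}. The paper's proof is terser --- it cites Lemma~9 of \cite{marzec21} directly as giving the general-$N$ statement --- whereas you unpack the argument via Corollary~2 of \cite{marzec21}, the fundamental base case $a(N_i)=a(N_j)$ (also from Lemma~9), and orthogonality on $\Cl_d$; the matrix bookkeeping you flag in your final paragraph (in particular, the step ``using $\SL_2(\Z)$-invariance once more one may take $S_c$ among the chosen representatives $N_j$'', which is not literally $\SL_2(\Z)$-invariance of Fourier coefficients since conjugation by $\mat{M}{}{}{1}$ does not commute with the $\SL_2(\Z)$-action) is exactly what is absorbed into the parametrization of Proposition~5.3 of \cite{pssmb} and Lemma~9 of \cite{marzec21}.
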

\begin{proof}We may assume without loss of generality that $F$ gives rise to an irreducible automorphic representation, and thus is a Hecke eigenform at all good primes. The lemma was proved in the case $N=1$ in this setting in \cite[Theorem 5.1 (i)]{pssmb}, and the proof for general $N$ is essentially identical and follows from \cite[Lemma 9]{marzec21} and \eqref{e:nonfunddecomp} above.
\end{proof}

In the next lemma we collect some facts about the local representations associated to forms of classical Saito-Kurokawa type.

\begin{lem}\label{l:SK properties}Let $F \in S_k^*(\G_0^{(2)}(N))$ be a non-zero form in the generalized Maass subspace such that $F$ is a Hecke eigenform at all good primes. Let $\pi \in \Pi(F)$ and write $\pi\simeq\otimes_v \pi_v$. Let $f_0$ be a cuspidal holomorphic newform of weight $2k-2$ for $\Gamma_0(N_0)$ (see Proposition \ref{p:lfunctionsfacts}) whose adelization generates the automorphic representation  $\pi_0$ of $\GL_2(\A)$ such that $L^N(s, \pi) = L^N(s, \pi_0) \zeta^N(s+1/2)  \zeta^N(s-1/2)$. Then the following hold:
\begin{enumerate}
\item Suppose that $p$ is a prime  such that $p \nmid N_0$.\footnote{Note here that by Proposition \ref{p:lfunctionsfacts}, if $p|N_0$ then $p|N$; so the set of $p \nmid N$ is contained in the set of $p \nmid N_0$.} Then $\pi_p\simeq \chi_p \mathbf{1}_{\GL(2)} \rtimes \chi_p^{-1}$ is a representation  of type IIb in the notation of Table A.15 of \cite{NF}, where $\chi_p$ is an unramified unitary character of $\Q_p^\times$.

\item Suppose that $p$ is a prime such that $p | N_0$ and $v_p(N) = 1$. Then we have $v_p(N_0)=1$, and moreover either $\pi_p\simeq \tau(T, \nu^{-1/2})$ is  of type VIb in the notation of Table A.15 of \cite{NF}, or $\pi_p\simeq L((\nu^{1/2}\xi_p\mathrm{St}_{\GL(2)}, \nu^{-1/2})$ is of type Vb in the notation of Table A.15 of \cite{NF}, where $\xi_p$ is the unramified non-trivial quadratic character of $\Q_p^\times$. The former case above occurs iff $\pi_{0,p}$ is the Steinberg representation, and the latter case occurs iff $\pi_{0,p}$ is the unramified quadratic twist of the Steinberg representation.

\end{enumerate}

Furthermore, if $N$ is squarefree, then $F$ gives rise to an irreducible automorphic representation. Moreover, if $N$ is squarefree then $F$ is a newform (in the sense of Section 3.2 of \cite{DPSS20}) if and only if $N_0=N$.
\end{lem}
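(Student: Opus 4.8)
The plan is to argue locally, prime by prime. First I would record that, since $F$ lies in $S_k^*(\G_0^{(2)}(N))$ and is therefore of \emph{classical} Saito--Kurokawa type, the character $\chi_0$ of case \ref{case:SK} of Proposition \ref{p:lfunctionsfacts} is trivial; consequently the global Arthur parameter of any $\pi \in \Pi(F)$ is $\psi = (\pi_0 \boxtimes 1) \boxplus (\triv \boxtimes \nu(2))$, where $\nu(2)$ denotes the $2$-dimensional irreducible representation of the Arthur $\SL_2(\C)$. Hence, at every place $v$, the component $\pi_v$ lies in the local Arthur packet attached to $\psi_v = (\pi_{0,v} \boxtimes 1) \boxplus (\triv \boxtimes \nu(2))$. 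Because $F$ is holomorphic of scalar weight $k$, the archimedean component $\pi_\infty$ is the distinguished \emph{holomorphic} member of its local packet, hence is determined by $F$; and at each finite prime $p$, the component $\pi_p$ is the non-generic member of its local packet that admits a nonzero vector fixed by the Siegel congruence subgroup $K_{0,p}(N)$ of \eqref{k0pdef}. The argument then runs on the explicit description of the local Saito--Kurokawa packets of $\GSp_4(\Q_p)$ and of the $K_{0,p}(p^n)$-fixed vectors in their members, as obtained by Gan--Takeda \cite{GT11} and Arthur \cite{JA13} and tabulated by Schmidt \cite{RS18, schcap} and Roberts--Schmidt \cite{NF}, together with the Ramanujan--Deligne bound for the newform $f_0$.

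For part (1), let $p \nmid N_0$, so that $\pi_{0,p}$ is unramified and, by the Ramanujan--Deligne bound, equals $\chi_p \times \chi_p^{-1}$ for an unramified unitary character $\chi_p$ of $\Q_p^\times$. The local packet attached to such a $\psi_p$ contains a unique non-generic member; comparing its spin $L$-factor with $L(s, \pi_{0,p})\,\zeta_p(s+1/2)\,\zeta_p(s-1/2)$ and matching against Table A.15 of \cite{NF} identifies it as the type IIb representation $\chi_p \mathbf{1}_{\GL(2)} \rtimes \chi_p^{-1}$. Since $\pi_p$ is generated by a holomorphic Siegel cusp form it must be this member, which proves (1); this also covers the primes $p \mid N$ with $p \nmid N_0$, at which $\pi_p$ is then the \emph{spherical} type IIb representation.

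For part (2), let $p \mid N_0$ with $v_p(N) = 1$. Since $\pi_0$ has trivial central character, the representations of $\GL_2(\Q_p)$ of conductor exponent $1$ are exactly $\mathrm{St}_{\GL(2)}$ and its unramified quadratic twist $\xi_p \mathrm{St}_{\GL(2)}$, while every other ramified possibility has conductor exponent $\ge 2$. On the $\GSp_4$ side, the spin $\epsilon$-factor of the relevant member of the packet attached to $\psi_p$ equals $\epsilon(s, \pi_{0,p})$ (the trivial-character part contributing nothing), so its conductor exponent equals that of $\pi_{0,p}$; and, reading off the tables of \cite{NF, schcap}, the least $n$ for which that member carries a nonzero $K_{0,p}(p^n)$-fixed vector equals this common conductor exponent. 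Since $v_p(N) = 1$, we conclude $v_p(N_0) = 1$, i.e., $\pi_{0,p} \in \{\mathrm{St}_{\GL(2)}, \xi_p \mathrm{St}_{\GL(2)}\}$; matching $L$-factors once more identifies the corresponding packet member as the type VIb representation $\tau(T, \nu^{-1/2})$ when $\pi_{0,p} = \mathrm{St}_{\GL(2)}$ and as the type Vb representation $L(\nu^{1/2}\xi_p \mathrm{St}_{\GL(2)}, \nu^{-1/2})$ when $\pi_{0,p} = \xi_p \mathrm{St}_{\GL(2)}$, which proves (2). I expect this last matching --- verifying that the threshold for $K_{0,p}(p^n)$-fixed vectors in a Saito--Kurokawa packet member coincides with the conductor exponent of $\pi_{0,p}$ --- to be the genuinely delicate point, since the relevant fixed-vector dimensions for the Siegel parahoric must be read off carefully from the local classification.

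Finally, suppose $N$ is squarefree. Then for each prime $p$ either $p \nmid N$, so $\pi_p$ is the unique spherical member of its local packet, or $v_p(N) = 1$, so by parts (1)--(2) the local packet attached to $\psi_p$ contains a unique member admitting a nonzero $K_{0,p}(p)$-fixed vector (type IIb if $p \nmid N_0$; type VIb or Vb if $p \mid N_0$). Together with the fact that $\pi_\infty$ is determined by $F$, this shows that any two elements of $\Pi(F)$ are isomorphic at every place, hence isomorphic, so by Arthur's multiplicity one \cite{JA13} the representation $\pi_F$ is irreducible. For the last assertion: by definition $F$ is a newform in the sense of \cite[\S3.2]{DPSS20} exactly when $\pi_p$ is ramified --- equivalently, has no nonzero $\GSp_4(\Z_p)$-fixed vector --- for every $p \mid N$; by part (1) this fails at any $p \mid N$ with $p \nmid N_0$ (where $\pi_p$ is the spherical type IIb representation), so newness amounts to $N \mid N_0$, and combining this with $N_0 \mid N$ (from Proposition \ref{p:lfunctionsfacts}) and $v_p(N_0) = v_p(N) = 1$ for $p \mid N_0$ (from part (2)) gives $N = N_0$.
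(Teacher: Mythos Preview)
Your approach matches the paper's: both localize the Arthur parameter, consult Table 2 of \cite{schcap} for the possible packet members, and then read off the $K_{0,p}(p^n)$-fixed vector dimensions from Table A.15 of \cite{NF}. One imprecision deserves mention. Your organizing claim in part (2) --- that for a Saito--Kurokawa packet member the minimal Siegel-congruence level equals the conductor exponent of $\pi_{0,p}$ --- fails for the types VIc and Va*, which lie in the same local packets as VIb and Vb respectively, share conductor exponent $1$, yet have no $K_{0,p}(p)$-fixed vector. The paper sidesteps this by directly enumerating the six possible types (Vb, Va*, VIb, VIc, XIb, XIa*) appearing across all Saito--Kurokawa packets and checking from Table A.15 that only Vb and VIb admit $K_{0,p}(p)$-fixed vectors; this simultaneously pins down $\pi_p$ and forces $v_p(N_0)=1$. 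Since you ultimately defer to the same tables, your argument survives, but the conductor-matching heuristic is misleading as stated.

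For the newform equivalence, your characterization ``newform $\iff$ $\pi_p$ ramified for every $p\mid N$'' is not the literal definition in \cite{DPSS20} and needs justification in the nontrivial direction. The paper supplies this by noting that for types Vb and VIb the space of $K_{0,p}(p)$-fixed vectors is one-dimensional (so the vector is automatically the local newvector), and for the converse it invokes Theorem 2.3.1 of \cite{sch05} to show that a spherical type IIb representation carries no local newvector with respect to $K_{0,p}(p)$. Your shortcut is equivalent in this specific setting, but only because of that one-dimensionality.
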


\begin{proof}By definition, the global Arthur parameter of $\pi$ is equal to $(\pi_0 \boxtimes 1) \boxplus (1 \boxtimes \nu(2))$; see the notation and explanation in Section 1 of \cite{schcap} and note that the quadratic character $\sigma$ (which equals the character $\chi_0$ in the notation of part 3 of Proposition \ref{p:lfunctionsfacts})) is trivial in the present case. Now the possibilities for $\pi_p$ can be read off from Table 2 of \cite{schcap}. In particular, if $p$ is a prime such that such that $p \nmid N_0$, then $\pi_{0,p}$ is an unramified principal series of the form $\chi_p \times \chi_p^{-1}$, in which case Table 2 of \cite{schcap} asserts that $\pi_p\simeq \chi_p \mathbf{1}_{\GL(2)} \rtimes \chi_p^{-1}$.

Next suppose $p|N_0$. Hence $p|N$ and so $\pi_p$ has a vector fixed by $K_{0,p}(N)$, the group defined in \eqref{k0pdef}. Looking at Table 2 of \cite{schcap} we see that $\pi_p$ is one of the types Vb, Va*, VIb, VIc, XIb or XIa*, and of these only Vb and VIb has a vector  fixed by $K_{0,p}(N)$ when $v_p(N)=1$ (as can be read off from Table A.15 of \cite{NF}).   So $\pi_p$ is either of Type VIb or Type Vb. Again from Table 2 of \cite{schcap}, we see that in the former case $\pi_{0,p}$ is the Steinberg representation, and in the latter case  $\pi_{0,p}$ is the unramified quadratic twist of the Steinberg representation. Clearly in both cases, the conductor exponent of $\pi_{0,p}$ equals 1, and hence $v_p(N_0)=1$.

Finally, suppose that $N$ is squarefree. By the previous parts we have that $N_0$ is also squarefree and we also know that $N_0 | N$. Furthermore, we know exactly the local components $\pi_p$ at each place. It follows that any two representations in $\pi(F)$ are isomorphic, and consequently, $F$ gives rise to an irreducible automorphic representation. If  $N_0=N$, then for each prime $p|N$ (as well as for each prime $p \nmid N)$, $\pi_p$ has a  1-dimensional space of $K_{0,p}(N)$-fixed vectors as follows from the previous parts and  Table A.15 of \cite{NF}; consequently, $F$ is a newform. Conversely, if $F$ is a newform then $\pi_p$ cannot be of Type IIb at any $p|N$, because the Type IIb representation being spherical does not contain a local newvector with respect to $K_{0,p}(N)$ (see Theorem 2.3.1 of \cite{sch05}). Therefore by the previous parts, $N_0=N$.

This concludes the proof.
\end{proof}
\begin{rmk} If $N$ is not squarefree, then there are 4 further possibilities for $\pi_p$ beyond the Types IIb, Vb and VIb mentioned in Lemma \ref{l:SK properties}, namely the types  Va*, VIc, XIb and XIa*.
\end{rmk}

Given a prime $p$ and non-negative integers $\ell, m$, define the matrix \[h_p(\ell, m) = \left[\begin{smallmatrix} p^{\ell + 2m}&&&\\&p^{\ell+m}&&\\&&1&\\&&&p^m\end{smallmatrix}\right]\]
We also need the subgroup $P_{1,p} \subset \GSp_4(\Z_p)$ which is defined as $P_{1,p}:=K_{0,p}(p)$ where $K_{0,p}(N)$ is defined in \eqref{k0pdef}.

Consider an irreducible admissible representation $\pi$ of $\GSp_4(\Q_p)$ that admits a local Bessel model with respect to a standard additive character $\theta$ (see Section 2 of \cite{pssmb}) and the trivial character $\Lambda=\mathbf{1}$ of a quadratic extension $L$ of $\Q_p$ ($L$ equals either $\Q_p \oplus \Q_p$ or a quadratic field extension of $\Q_p$). We fix an isomorphism $w \mapsto B_w$ (which is uniquely determined up to multiples) of $\pi$ with its $(\mathbf{1}, \theta)$-Bessel model. This associates to each vector $w \in V_\pi$ a function $B_w$ on $\GSp_4(\Q_p)$  and we are interested in bounding $B_w(h_p(\ell, m))$ as a function of $\ell$ and $m$. The heart of our method is the next proposition, which quantifies the growth of the functions  $B_w(h_p(\ell, m))$  associated to suitable vectors $w$ in local representations of Saito-Kurokawa type.

\begin{prop}\label{p:SKlocal}
\begin{enumerate}\item Let $\pi\simeq \chi \mathbf{1}_{\GL(2)} \rtimes \chi^{-1}$ be a representation of $\GSp_4(\Q_p)$ of type IIb in the notation of Table A.15 of \cite{NF}, where $\chi$ is an unramified unitary character of $\Q_p^\times$. Let $w_0$ be the (unique up to multiples) spherical function in the space of $\pi$. Then $B_{w_0}(1) \neq 0$ and for each pair of non-negative integers $\ell, m$, we have the bound
\[|B_{w_0}(h_p(\ell, m))| < (\ell +1)(2\ell + 2m+1) p^{-\ell - \frac{3m}2}|B_{w_0}(1)|.\]
Moreover, the space of $P_{1,p}$-fixed vectors in $\pi$ has a basis $\{w_0, w_1, w_2\}$ where $w_0$ is as above, and for $i=1,2$ we have $B_{w_i}(1) \neq 0$ and
\[|B_{w_i}(h_p(\ell, m))| = p^{-\frac{3}{2}\ell - \frac{3}{2}m}|B_{w_i}(1)|.\]
\item Let $\pi$ be isomorphic to one of the following  representations of $\GSp_4(\Q_p)$:
     \begin{itemize} \item The representation $\tau(T, \nu^{-1/2})$ of type VIb in the notation of Table A.15 of \cite{NF},
     \item The representation $L((\nu^{1/2}\xi\mathrm{St}_{\GL(2)}, \nu^{-1/2})$ of type Vb in the notation of Table A.15 of \cite{NF} where $\xi$ is the unramified non-trivial quadratic character of $\Q_p^\times$.
         \end{itemize}
                    Suppose that $\pi$ admits a $(\mathbf{1}, \theta)$ Bessel model and let $w$ be a non-zero vector in the  1-dimensional space of $P_{1,p}$-fixed vectors in $\pi$. Then $B_w(1) \neq 0$ and for each pair of non-negative integers $\ell, m$, we have
\[B_{w}(h_p(\ell, m)) = \pm p^{-2\ell - 2m}B_{w}(1).\] Above, the sign in $\pm$ equals -1 if and only if $\pi$ is of Type Vb and $\ell + m$ is odd.
\end{enumerate}
\end{prop}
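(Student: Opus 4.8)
The plan is to reduce the evaluation of $B_w(h_p(\ell,m))$ to explicit formulas for local Bessel functions: for the spherical vector $w_0$ we use Sugano's formula, and for the remaining $P_{1,p}$-fixed vectors we follow the local computations of Pitale--Schmidt \cite{ps14}. Throughout we use the standard description of the support of a Bessel function, which in particular shows that the Bessel function of a $P_{1,p}$-fixed vector is determined by its values on the elements $h_p(\ell,m)$. Consequently, if $B_w$ has a monomial shape on the $h_p(\ell,m)$ and $B_w(1)=0$, then $B_w\equiv 0$ and hence $w=0$; this is how each nonvanishing assertion $B_w(1)\ne 0$ will be deduced.

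\emph{Part (1), the spherical vector.} Since $\chi$ is unramified, $\pi\simeq\chi\mathbf{1}_{\GL(2)}\rtimes\chi^{-1}$ is spherical with Satake parameters $\{\chi(p),\chi(p)^{-1},p^{1/2},p^{-1/2}\}$, and $|\chi(p)|=1$ as $\chi$ is unitary. We now invoke Sugano's closed formula for the spherical Bessel function (see \cite{KST12} and the references there; the special case $(\ell,m)=(1,0)$ is recorded in \cite[(2.3.6)]{KST12}). The parameters are degenerate here --- the pair $\{p^{1/2},p^{-1/2}\}$ has ratio $p$ --- so the generic closed form acquires an apparent singularity which must be resolved by passing to the limit; carrying this out (as in \cite{ps14}) yields a finite double sum indexed by $(\ell,m)$. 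Using $|\chi(p)|=1$ and the triangle inequality one bounds each term by a constant multiple of $p^{-\ell-3m/2}\,|B_{w_0}(1)|$, while the number of terms contributes the polynomial factor $(\ell+1)(2\ell+2m+1)$; the value $B_{w_0}(1)$ is the normalising constant of the formula and is nonzero.

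\emph{Part (1), the non-spherical $P_{1,p}$-fixed vectors, and Part (2).} From Table A.15 of \cite{NF} the space of $P_{1,p}$-fixed vectors is $3$-dimensional for type IIb and $1$-dimensional for types Vb and VIb. In each case we choose a convenient basis and compute the Bessel functions explicitly following \cite{ps14}; apart from the spherical vector $w_0$ in type IIb, the resulting functions are \emph{monomial} on the $h_p(\ell,m)$, i.e. $B_w(h_p(\ell,m))=c^\ell d^m B_w(1)$. Reading off the scalars from the model gives $|c|=|d|=p^{-3/2}$ for the type IIb vectors $w_1,w_2$, and $c=d=p^{-2}$ for type VIb and $c=d=-p^{-2}$ for type Vb, the extra sign reflecting the nontrivial unramified quadratic character $\xi$ in the Type Vb realization and producing the factor $(-1)^{\ell+m}$. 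In Part (2) one must additionally record, for each quadratic algebra $L$, which of the Vb and VIb representations actually admits a $(\mathbf{1},\theta)$ Bessel model; this dichotomy is again contained in \cite{ps14}. The nonvanishing of $B_w(1)$ in all these cases follows from the observation in the first paragraph, since $w\ne 0$.

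\emph{The main obstacle.} The technical heart is the spherical case of Part (1): one must handle the degenerate Satake parameters in Sugano's formula and extract the \emph{sharp} polynomial factor $(\ell+1)(2\ell+2m+1)$, rather than a crude bound, which requires carefully tracking both the number and the sizes of the terms that survive in the limit. The monomial formulas for $w_1,w_2$ and in Part (2) are by comparison routine, the only delicate points there being the identification of which Bessel models occur and the sign in the Vb case.
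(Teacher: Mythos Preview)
Your approach is essentially the one the paper takes: Sugano's formula for the spherical vector, and the explicit computations of \cite{ps14} (specifically their Table 3 and Proposition 6.1, which identify the $P_{1,p}$-fixed vectors as simultaneous eigenvectors for $T_{1,0}$, $T_{0,1}$ and read off the monomial Bessel values from the eigenvalues) for the remaining vectors. Two points are worth flagging.

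First, your handling of the spherical bound is not quite how the paper proceeds. Rather than resolving the degenerate Sugano formula and bounding a ``double sum'', the paper uses a two-step argument: the explicit formula \cite[(12)]{pssmb} gives directly $|B_{w_0}(h_p(0,m))|<(2m+1)p^{-3m/2}$, and then the recursion \cite[(10)]{pssmb} (which expresses $B_{w_0}(h_p(\ell,m))$ as a sum over $0\le i\le\ell$ of values $B_{w_0}(h_p(0,\ell+m-i))$ weighted by $p^{-3\ell/2+i/2}$ times Satake parameters) yields a single sum of $\ell+1$ terms, each bounded by $(2\ell+2m+1)p^{-\ell-3m/2}$. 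This is cleaner than carrying out the degenerate limit yourself, and is what gives the sharp factor $(\ell+1)(2\ell+2m+1)$.

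Second, your nonvanishing argument for $B_w(1)$ has a gap. You assert that the Bessel function of a $P_{1,p}$-fixed vector is determined by its values on the $h_p(\ell,m)$; this is the standard support description for \emph{spherical} vectors, but for $P_{1,p}$-fixed vectors the double coset decomposition $R\backslash G/P_{1,p}$ has additional representatives beyond the $h_p(\ell,m)$. So ``$B_w(h_p(\ell,m))=0$ for all $\ell,m$'' does not immediately force $B_w\equiv 0$. The paper sidesteps this: for $w_1,w_2$ in Part (1), Proposition 6.1 of \cite{ps14} gives $B_{w_i}(h_p(\ell,m))=\alpha^{\pm\ell\pm m}p^{-3\ell/2-3m/2}$ as an absolute value (not a ratio), so $B_{w_i}(1)=1$; and for Part (2) the paper invokes Theorem 8.2 of \cite{ps14} directly for the nonvanishing. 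You should cite those results rather than rely on the support claim.
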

\begin{proof}
Suppose that $\pi\simeq \chi \mathbf{1}_{\GL(2)} \rtimes \chi^{-1}$ is a representation of $\GSp_4(\Q_p)$ of type IIb where $\chi$ is an unramified unitary character of $\Q_p^\times$, and put $\alpha = \chi(\varpi)$ where $\varpi$ is a uniformizer in $\Q_p$. The fact that $B_{w_0}(1) \neq 0$ is a consequence of Sugano's formula \cite{sug}; see also Section 2 of \cite{pssmb}. We may therefore without loss of generality assume that $B_{w_0}(1) = 1$. Now by \cite[(12)]{pssmb}, we obtain that $|B_{w_0}(h_p(0, m))| < (2m+1) p^{-3m/2}$ (note that the Satake parameters $\alpha^\pm$ have absolute value 1) for all $m$. Now using \cite[(10)]{pssmb} and the above bound,
we obtain $$|B_{w_0}(h_p(\ell, m))| < \sum_{i=0}^\ell (2\ell + 2m - 2i +1)p^{-\frac{3\ell}{2} - \frac{3m}{2} + \frac{i}{2}} < (\ell+1)(2 \ell + 2m +1)p^{-\ell  - \frac{3m}{2}}.$$

Moreover, the space of $P_{1,p}$-fixed vectors in $\pi$ is 3-dimensional and has a basis $\{w_1, w_2, w_3\}$ consisting of eigenvectors for the commuting Hecke operators $T_{1,0}$ and $T_{0,1}$ with eigenvalues $(\alpha p^{3/2}, \alpha (p+1) p^{3/2})$, $(\alpha^{-1} p^{3/2}, \alpha^{-1} (p+1) p^{3/2})$, $(p^2, (\alpha + \alpha^{-1})p^{5/2})$  as calculated in Table 3 of \cite{pslongversion}. Since $w_1 + w_2 + w_3 = w_0$, it follows that  $\{w_0, w_1, w_2\}$ is a basis for the space of $P_{1,p}$-fixed vectors in $\pi$. Now by Proposition 6.1 of \cite{ps14} it follows that $B_{w_i}(h_p(\ell, m)) = \alpha^{\pm \ell \pm m} p^{-\frac{3 \ell}{2}-\frac{3 m}{2}}$ which completes the proof of the first part of the proposition.

Finally, let $\pi$ be either isomorphic to either $\tau(T, \nu^{-1/2})$ or $L((\nu^{1/2}\xi\mathrm{St}_{\GL(2)}, \nu^{-1/2})$. Using Table 3 of \cite{ps14}, we see that $w$ is an eigenvector for the operators $T_{1,0}$ and $T_{0,1}$ with pair of eigenvalues $(\eps p, \eps p(p+1))$ where $\eps =1$ for $\tau(T, \nu^{-1/2})$ and $\eps=-1$ for $L((\nu^{1/2}\xi\mathrm{St}_{\GL(2)}, \nu^{-1/2})$. Now by Proposition 6.1 of \cite{ps14} (noting that $m_0 =0$ in our case) it follows that $B_{w}(h_p(\ell, m)) = \eps^{-\ell-m}p^{-2\ell - 2m}B_{w}(1).$ The fact that $B_{w}(1) \neq 0$ follows from Theorem 8.2 of \cite{ps14}.
\end{proof}

The next proposition bounds the fundamental Fourier coefficients of cusp forms in the generalized Maass subspace.

\begin{prop}\label{p:fundSK}
Let $F \in S_k^*(\G_0^{(2)}(N))$ be a non-zero cusp form with Fourier coefficients $a(T)$. For fundamental $T \in \n_2$, we have
$$
| a(T) |
~\ll_{F, \epsilon} \begin{cases}
({\rm det}~T)^{\frac{k}{2} - \frac{7}{12}+\epsilon}
&\text{ unconditionally,} \\ ({\rm det}~T)^{\frac{k}{2} - \frac{3}{4}+\epsilon}
&\text{ assuming the generalized
Lindel\"of hypothesis.}\end{cases}
$$
 \end{prop}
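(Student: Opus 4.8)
The plan is to express a fundamental Fourier coefficient $a(T)$ of $F$ in terms of (the square root of) a central value of the quadratic twist $L(s, \pi_0 \otimes \chi_d)$, where $d = \disc(T)$ and $\pi_0$ is the $\GL_2(\A)$-representation attached to $F$ as in case \ref{case:SK} of Proposition \ref{p:lfunctionsfacts}, and then to feed in the (sub)convexity bound for this $L$-value in the $d$-aspect.

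First I would reduce to the case where $F$ is a single Hecke eigenform at all good primes of classical Saito--Kurokawa type: since $S_k^*(\G_0^{(2)}(N))$ is by definition spanned by such eigenforms, the general case follows by linearity and the triangle inequality (the implied constant may depend on $F$). So assume $F$ is attached to a cuspidal newform $f_0$ of weight $2k-2$ generating $\pi_0$. By Lemma \ref{l:SKkey}, $a(T)$ depends only on $d = \disc(T)$ when $T$ is fundamental; passing to the first Fourier--Jacobi coefficient $\phi_1$ of $F$ (a Jacobi cusp form of weight $k$, index $1$, level $N$) and using the correspondence between index-$1$ Jacobi forms and half-integral weight forms (Eichler--Zagier, extended to level $N$), one identifies $a(T)$, up to a nonzero $F$-dependent constant, with the $|d|$-th Fourier coefficient of a half-integral weight cusp form $g$ of weight $k - \tfrac12$. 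The next step is the Waldspurger--Kohnen--Zagier formula (in a form valid for general level, or equivalently the adelic Gan--Gross--Prasad/Waldspurger formula for Bessel periods specialized to the Saito--Kurokawa representation $\pi \in \Pi(F)$, cf.\ \cite{DPSS20}), which gives a constant $C_F > 0$ such that
$$|a(T)|^2 = C_F \, |d|^{\,k - 3/2} \, L\!\left(\tfrac12, \pi_0 \otimes \chi_d\right) \prod_{p \mid N} e_p(d),$$
where the local factors $e_p(d)$ satisfy $|e_p(d)| \ll_F 1$ (there are only finitely many possibilities for the local character $\chi_{d,p}$ and $\pi_{0,p}$ is fixed; the $e_p(d)$ vanish exactly when a local root number forces $a(T) = 0$).

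The final step is to bound the central $L$-value with $\pi_0$ held fixed and $d$ varying. Its analytic conductor is $\ll_F |d|^2$, so convexity gives $L(\tfrac12, \pi_0 \otimes \chi_d) \ll_{F,\epsilon} |d|^{1/2+\epsilon}$ and hence $|a(T)| \ll_{F,\epsilon} |d|^{k/2 - 1/2 + \epsilon}$; inserting instead the Weyl-strength subconvexity bound $L(\tfrac12, \pi_0 \otimes \chi_d) \ll_{F,\epsilon} |d|^{1/3+\epsilon}$ (the cubic moment bound of Conrey--Iwaniec, together with its extensions covering the level of $f_0$) improves the exponent to $k/2 - 7/12$. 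Recalling $|d| = 4\det T$, this is the claimed unconditional bound, and under the generalized Lindel\"of hypothesis one has $L(\tfrac12, \pi_0 \otimes \chi_d) \ll_{F,\epsilon} |d|^{\epsilon}$, which yields $|a(T)| \ll_{F,\epsilon} (\det T)^{k/2 - 3/4 + \epsilon}$.

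The main obstacle is the second step for non-squarefree $N$: the classical Waldspurger--Kohnen--Zagier identity is usually stated for squarefree (even prime) level and with $g$ in a plus space, so one must either work adelically with the Bessel period of $\pi$, or carefully track the bad-prime local factors $e_p(d)$ to confirm they are bounded uniformly in $d$. The remaining ingredients --- reduction to eigenforms, the identification of the relevant half-integral weight coefficient via Lemma \ref{l:SKkey}, and the (sub)convexity input --- are routine.
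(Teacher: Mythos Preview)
Your proposal is correct and follows essentially the same approach as the paper: reduce to a single eigenform, relate the fundamental coefficient $a(T)$ to $L(\tfrac12,\pi_0\otimes\chi_d)^{1/2}$ via a Waldspurger-type identity, and then insert the Weyl-strength subconvexity bound (respectively Lindel\"of). The paper streamlines the middle step by citing \cite[Prop.~5.13]{JLS21+} directly (which packages the adelic Bessel-period computation and works for all $N$), and for the subconvexity input cites Petrow--Young \cite{PY19}, which is precisely the extension of Conrey--Iwaniec to arbitrary level that you allude to.
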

\begin{proof}
We may assume without loss of generality that $F$ gives rise to an irreducible representation $\pi\simeq\pi_F$ (since any $F \in S_k^*(\G_0^{(2)}(N))$ may be written as a sum of such forms). By Proposition \ref{p:lfunctionsfacts} there exists a representation $\pi_0$ of $\GL_2(\A)$ such that \[L^N(s, \pi) = L^N(s, \pi_0) \zeta^N(s+1/2)  \zeta^N(s-1/2).\]

Let $T$ be fundamental and put $d = -4 \det(T)$. Then by Proposition 5.13 of \cite{JLS21+}, we have $$
| a(T) |
~\ll_{F}~
({\rm det}~T)^{\frac{k}{2} - \frac{3}{4}}
L(\tfrac12, \pi_0 \times \chi_d)^{1/2}.
$$

By \cite{PY19} we have $L(\tfrac12, \pi_0 \times \chi_d) \ll_{\pi_0, \eps} |d|^{1/3 + \eps}$ and conditionally on the generalized
Lindel\"of hypothesis we have $L(\tfrac12, \pi_0 \times \chi_d) \ll_{\pi_0, \eps} |d|^{\eps}$. The result follows.
\end{proof}

We can now prove the main theorem of this section.

\begin{thm}\label{t:SKRS}Let $k$ and $N$ be positive integers and let $F \in S_k^*(\G_0^{(2)}(N))$ be a non-zero cusp form with Fourier coefficients $a(T)$. Write $N=N_1N_2$ where $N_1$ is squarefree and $\gcd(N_1, N_2)=1.$ Then \[|a(T)| \ll_{F, \eps} L_T^{k-1+\eps} M_T^{k-3/2 +\eps} |d_T|^{\frac{k}{2} - \frac{7}{12} + \eps}\]
 for all $T \in \n_2$ satisfying \begin{equation}\label{e:skcondition}\gcd(L_TM_T, N_2) =1.\end{equation} Furthermore, if we assume the generalized
Lindel\"of hypothesis, then we have \[|a(T)| \ll_{F, \eps} L_T^{k-1+\eps} M_T^{k-3/2 +\eps} |d_T|^{\frac{k}{2} - \frac{3}{4} + \eps}\] for all $T \in \n_2$ satisfying \eqref{e:skcondition}.
\end{thm}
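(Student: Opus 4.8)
The plan is to reduce the bound for a general $T$ to the fundamental case, which is already settled in Proposition~\ref{p:fundSK}, by tracking how $|a(T)|$ changes as one strips off the content $L_T$ and the square part $M_T$ of the discriminant; this change is governed precisely by the local Bessel functions estimated in Proposition~\ref{p:SKlocal}.

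First I would assume, as one may by writing $F$ as a finite sum of such forms, that $F$ gives rise to an irreducible automorphic representation $\pi\simeq\pi_F=\otimes_v\pi_v$; then $F$ is a Hecke eigenform at all good primes and, by Lemma~\ref{l:SK properties}, $\pi_p$ is spherical for $p\nmid N$, of type IIb for $p\nmid N_0$, and of type Vb or VIb for the primes $p\mid N_0$ (all of which satisfy $v_p(N)=1$). Fix $T\in\n_2$ with $\gcd(L_TM_T,N_2)=1$. By Lemma~\ref{l:SKkey} we may replace $T$ by the matrix $S'$ of \eqref{e:nonfunddecomp} built from a fundamental matrix $T_0$ of discriminant $d_T$; note that $a(T_0)$ is the same for every fundamental matrix of discriminant $d_T$, again by Lemma~\ref{l:SKkey}. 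If $a(T_0)=0$ then $a(T)=0$ and the bound is trivial, so assume $a(T_0)\neq 0$, in which case Proposition~\ref{p:fundSK} applies to $a(T_0)$.

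Next I would express $a(S')$ in terms of $a(T_0)$ via the relation between classical Fourier coefficients and local Bessel functions (Sugano's theory in the explicit form of \cite{ps14, pssmb}, which is precisely what Proposition~\ref{p:SKlocal} is built to exploit; this generalizes the use of \eqref{e:fourierrelation} in the proof of Lemma~\ref{Kohnen}). Since $S'$ and $T_0$ have the same fundamental discriminant $d_T$, both coefficients are captured by the same global Bessel model of $\pi$ with trivial character $\Lambda=\mathbf 1$, and one obtains an identity of the shape
\[
a(S') \;=\; a(T_0)\,\Big(\frac{\det S'}{\det T_0}\Big)^{k/2}\;\prod_{p}\,\frac{B_{w_p}\!\big(h_p(v_p(L_T),\,v_p(M_T))\big)}{B_{w_p}(1)},
\]
where $w_p$ is the relevant local vector --- the spherical vector when $p\nmid N$, and the distinguished $P_{1,p}$-fixed vector of Proposition~\ref{p:SKlocal}(2) when $p\mid N_0$ --- and the product over finite primes is effectively finite because $h_p(0,0)=1$ whenever $p\nmid L_TM_T$ (so no prime dividing $N_2$ contributes). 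The archimedean normalizing exponent $k/2$ is the one forced by consistency with Lemma~\ref{Kohnen} in the case $T=pT_0$. Establishing this identity with exactly the right exponent and the right local vectors --- in particular at the ramified primes dividing $N_1$, where Proposition~\ref{p:SKlocal}(2) rather than the spherical computation is needed --- is the main obstacle; everything afterwards is bookkeeping.

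Finally I would assemble the bound. Since $\det S' = L_T^2M_T^2\det T_0$, the archimedean factor equals $(L_TM_T)^k$. For $p\mid L_TM_T$ with $p\nmid N_0$, Proposition~\ref{p:SKlocal}(1) bounds $|B_{w_p}(h_p(\ell,m))/B_{w_p}(1)|$ by $(\ell+1)(2\ell+2m+1)p^{-\ell-3m/2}$ with $\ell=v_p(L_T)$, $m=v_p(M_T)$, while for $p\mid N_0$ Proposition~\ref{p:SKlocal}(2) gives the stronger $p^{-2\ell-2m}\le p^{-\ell-3m/2}$. Multiplying over $p\mid L_TM_T$, using $\prod_{p\mid L_TM_T}p^{-v_p(L_T)}=L_T^{-1}$ and $\prod_{p\mid L_TM_T}p^{-3v_p(M_T)/2}=M_T^{-3/2}$, and absorbing the polynomial factors into $(L_TM_T)^\eps$ by a standard divisor-type estimate, one gets $\prod_p|B_{w_p}(h_p)/B_{w_p}(1)|\ll_\eps (L_TM_T)^\eps L_T^{-1}M_T^{-3/2}$, hence
\[
|a(T)| = |a(S')| \;\ll_{F,\eps}\; |a(T_0)|\,L_T^{k-1+\eps}M_T^{k-3/2+\eps}.
\]
Inserting $|a(T_0)|\ll_{F,\eps}|d_T|^{k/2-7/12+\eps}$ unconditionally, respectively $|a(T_0)|\ll_{F,\eps}|d_T|^{k/2-3/4+\eps}$ under the generalized Lindel\"of hypothesis, from Proposition~\ref{p:fundSK} yields both stated bounds.
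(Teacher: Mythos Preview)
Your approach is essentially the same as the paper's: reduce to forms generating an irreducible representation, relate $a(T)$ to the fundamental coefficient $a(T_0)$ via a product of local Bessel values $B_{\phi_p}(h_p(\ell_p,m_p))/B_{\phi_p}(1)$ (with global factor $(L_TM_T)^k$), bound these by Proposition~\ref{p:SKlocal}, and finish with Proposition~\ref{p:fundSK}.

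There is, however, a genuine gap in your reduction at the primes $p\mid N_1$ with $p\nmid N_0$. Your dichotomy ``spherical vector when $p\nmid N$, and the $P_{1,p}$-fixed vector of Proposition~\ref{p:SKlocal}(2) when $p\mid N_0$'' leaves this case uncovered: here $\pi_p$ is of type IIb (so Proposition~\ref{p:SKlocal}(2) does not apply), but the local component $\phi_p$ of the adelization is only $P_{1,p}$-fixed, not spherical, and the space of $P_{1,p}$-fixed vectors in a IIb representation is three-dimensional. You also need $\phi_F$ to be a pure tensor for the prime-by-prime Bessel identity to make sense, which does not follow from irreducibility of $\pi_F$ alone. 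The paper fixes both points simultaneously by strengthening the initial reduction: one may assume not only that $\pi_F$ is irreducible and $\phi_F=\otimes_v\phi_v$ is factorizable, but also that at every $p\mid N_1$ with $\pi_p$ of type IIb the local vector $\phi_p$ is one of the basis vectors $w_0,w_1,w_2$ of Proposition~\ref{p:SKlocal}(1) (such $F$ still linearly span $S_k^*(\Gamma_0^{(2)}(N))$). The second assertion of Proposition~\ref{p:SKlocal}(1), giving $|B_{w_i}(h_p(\ell,m))|=p^{-3\ell/2-3m/2}|B_{w_i}(1)|$ for $i=1,2$, then supplies exactly the missing estimate, and your bookkeeping goes through unchanged.
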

\begin{proof}We may assume without loss of generality that $F$ has the following properties:
\begin{itemize}
\item $F$ gives rise to an irreducible representation $\pi\simeq\otimes_v \pi_v$, and therefore (by Lemma \ref{l:equivconds}) $F$ is a Hecke eigenform at all good primes.
\item The adelization $\phi_F$ of $F$ corresponds to a factorizable vector $\phi_F = \otimes_v \phi_v$ in the space of $\pi$.

\item  If  $p|N_1$ and $\pi_p = \chi \mathbf{1}_{\GL(2)} \rtimes \chi^{-1}$ is of type IIb, then $\phi_p$ equals one of the vectors $w_0, w_1, w_2$ in the notation of Proposition \ref{p:SKlocal}.
\end{itemize}
Note that the set of $F$ satisfying the above properties linearly generate $S_k^*(\G_0^{(2)}(N))$. Note also (by Lemma \ref{l:SK properties} that if $p|N_1$ and $\pi_p$ is not of Type IIb, then $\pi_p$ must be of Type Vb or VIb and $\phi_p$ must correspond  to a (unique up to multiples) $P_{1,p}$-fixed vector in the space of $\pi_p$. Finally, Lemma \ref{l:SK properties} also tells us that at each prime $p \nmid N$, $\pi_p$ is of Type IIb.

Now, using essentially the same argument as in Corollary 2 of \cite{marzec21} (with $\Lambda=1$) together with Lemma \ref{l:SKkey}, we see that for $T$ as in the statement of the Theorem
\[a(T) \prod_{p|{L_TM_T}} B_{\phi_p}(1) = (L_TM_T)^k a(S_T) \prod_{p|{L_TM_T}} B_{\phi_p}(h_p(\ell_p, m_p)),\] where we write $L_T = \prod_p p^{\ell_p}$,  $M_T = \prod_p p^{m_p}$, and $S_T$ is any matrix of discriminant $d_T$. Above, we have used that $\gcd(L_TM_T, N_2) =1$ and consequently, $B_{\phi_p}(1) \neq 0$ for each $p|L_TM_T$ by the remarks at the beginning of this proof together with Proposition \ref{p:SKlocal}. Now, by Proposition \ref{p:SKlocal}, $\prod_{p|{L_TM_T}} |B_{\phi_p}(h_p(\ell_p, m_p))| \ll_\eps L_T^{-1+\eps}M_T^{-3/2+\eps} \prod_{p|{L_TM_T}} |B_{\phi_p}(1)|$. The result now follows from Proposition \ref{p:fundSK}.
\end{proof}
In the special case that $N$ is squarefree and $F \in S_k^*(\G_0^{(2)}(N))$ is a newform (in the sense of \cite[Section 3.2]{DPSS20}) we have an even sharper result.
\begin{prop}\label{t:SKRSnew}Let $N$ be a squarefree integer. Let $F \in S_k^*(\G_0^{(2)}(N))$ be a non-zero cusp form with Fourier coefficients $a(T)$, and suppose that $F$ is a newform (in the sense of \cite[Section 3.2]{DPSS20}). Then \[|a(T)| \ll_{F, \eps} \frac{L_T^{k-1+\eps} M_T^{k-3/2 +\eps}}{\gcd(L_T, N^\infty)(\gcd(M_T, N^\infty))^{1/2}} |d_T|^{\frac{k}{2} - \frac{7}{12} + \eps}\]
 for all $T \in \n_2$, and if we assume the generalized
Lindel\"of hypothesis, then the exponent of $|d_T|$ can be taken to be $\frac{k}{2} - \frac{3}{4} + \eps$.
\end{prop}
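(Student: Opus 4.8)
The plan is to follow the proof of Theorem~\ref{t:SKRS} almost verbatim; the one new input is that for a \emph{newform} $F$ of squarefree level, Lemma~\ref{l:SK properties} forces every ramified local component $\pi_p$ (i.e.\ $p\mid N$) to be of Type~Vb or VIb rather than Type~IIb, so that the sharper local Bessel bound of Proposition~\ref{p:SKlocal}(2) becomes available at those primes.

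First I would fix the local picture. Since $N$ is squarefree and $F$ is a newform, Lemma~\ref{l:SK properties} tells us that $F$ gives rise to an irreducible automorphic representation $\pi\simeq\otimes_v\pi_v$ whose associated $\GL_2$-representation $\pi_0$ has conductor $N_0=N$, that $\pi_p$ is of Type~IIb at every $p\nmid N$, and that $\pi_p$ is of Type~VIb ($\tau(T,\nu^{-1/2})$) or of Type~Vb ($L((\nu^{1/2}\xi_p\mathrm{St}_{\GL(2)},\nu^{-1/2}))$) at every $p\mid N$. Because the space of $K_{0,p}(N)$-fixed vectors in each $\pi_p$ is one-dimensional (the spherical line for $p\nmid N$; for $p\mid N$ this is part of Lemma~\ref{l:SK properties}), the adelization $\phi_F$ is a factorizable vector $\otimes_v\phi_v$ with $\phi_p$ the spherical vector for $p\nmid N$ and $\phi_p$ the (essentially unique) $P_{1,p}$-fixed newvector for $p\mid N$.

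Next I would invoke the Fourier-coefficient identity exactly as in the proof of Theorem~\ref{t:SKRS}, i.e.\ from Corollary~2 of \cite{marzec21} with $\Lambda=\mathbf 1$ together with Lemma~\ref{l:SKkey}: for every $T\in\n_2$, writing $L_T=\prod_p p^{\ell_p}$, $M_T=\prod_p p^{m_p}$ and letting $S_T$ be a fundamental matrix of discriminant $d_T$, one has
\[
a(T)\prod_{p\mid L_TM_T}B_{\phi_p}(1)
~=~(L_TM_T)^k\,a(S_T)\prod_{p\mid L_TM_T}B_{\phi_p}\bigl(h_p(\ell_p,m_p)\bigr),
\]
with every $B_{\phi_p}(1)\ne 0$ by Proposition~\ref{p:SKlocal}. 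I would then estimate the right-hand product place by place via Proposition~\ref{p:SKlocal}: at $p\mid L_TM_T$ with $p\nmid N$ (Type~IIb, spherical vector) the polynomial prefactor is absorbed into $(L_TM_T)^\eps$, giving $|B_{\phi_p}(h_p(\ell_p,m_p))|\ll_\eps p^{\eps(\ell_p+m_p)}p^{-\ell_p-\frac32 m_p}\,|B_{\phi_p}(1)|$, while at $p\mid L_TM_T$ with $p\mid N$ (Type~Vb or VIb) Proposition~\ref{p:SKlocal}(2) gives the exact identity $|B_{\phi_p}(h_p(\ell_p,m_p))|=p^{-2\ell_p-2m_p}\,|B_{\phi_p}(1)|$. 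Multiplying and separating the primes dividing $N$ from the rest, and using $\prod_{p\nmid N}p^{-\ell_p}=\gcd(L_T,N^\infty)/L_T$, $\prod_{p\mid N}p^{-2\ell_p}=\gcd(L_T,N^\infty)^{-2}$ (and the analogous identities for the $m_p$), one obtains
\[
\prod_{p\mid L_TM_T}\frac{|B_{\phi_p}(h_p(\ell_p,m_p))|}{|B_{\phi_p}(1)|}
~\ll_\eps~
\frac{(L_TM_T)^{\eps}}{L_T\,M_T^{3/2}\,\gcd(L_T,N^\infty)\,\gcd(M_T,N^\infty)^{1/2}}.
\]

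Finally I would combine the two displays to get
\[
|a(T)|~\ll_{F,\eps}~\frac{L_T^{\,k-1+\eps}\,M_T^{\,k-3/2+\eps}}{\gcd(L_T,N^\infty)\,\gcd(M_T,N^\infty)^{1/2}}\,|a(S_T)|,
\]
and bound $|a(S_T)|$ by Proposition~\ref{p:fundSK}: since $S_T$ is fundamental of discriminant $d_T$ we have $\det S_T=|d_T|/4$, so $|a(S_T)|\ll_{F,\eps}|d_T|^{\frac k2-\frac7{12}+\eps}$ unconditionally and $|a(S_T)|\ll_{F,\eps}|d_T|^{\frac k2-\frac34+\eps}$ under the generalized Lindel\"of hypothesis, which yields the claimed bounds. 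The argument is short because the substantive work is already done: the only genuinely new point is the observation (Lemma~\ref{l:SK properties}) that a newform of squarefree level has Type~Vb/VIb local structure at every ramified prime, and the main thing to be careful about is the bookkeeping of the factors $\gcd(\cdot,N^\infty)$ in passing from the local estimates to the global bound, together with the verification --- handled exactly as in Theorem~\ref{t:SKRS} --- that $\pi_p$ admits the Bessel model needed for the identity of \cite{marzec21} at the primes $p\mid N$.
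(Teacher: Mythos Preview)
Your proposal is correct and follows essentially the same approach as the paper. The paper's own proof is a single sentence: it says to run the proof of Theorem~\ref{t:SKRS} using the additional information from Lemma~\ref{l:SK properties} that for a newform of squarefree level every $\pi_p$ with $p\mid N$ is of Type~Vb or VIb, so that part~(2) of Proposition~\ref{p:SKlocal} applies at those primes and yields the sharper local decay $p^{-2\ell_p-2m_p}$; you have simply written out this argument in full, including the bookkeeping for the factors $\gcd(L_T,N^\infty)$ and $\gcd(M_T,N^\infty)^{1/2}$, which is exactly what the paper leaves implicit.
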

\begin{proof}The proof is essentially the same as Theorem \ref{t:SKRS} with the additional information that in this case, we know that $\pi_p$ is of Type Vb or Type VIb for all $p|N$ (see Lemma \ref{l:SK properties}. The sharper bound results from the fact that we are always in the situation of part (2) of Proposition \ref{p:SKlocal} for all $p|N$.
\end{proof}

\begin{proof} [Proof of Theorem \ref{DK-SK}] Note that if $T \in \n_2$ has the property that $\gcd(4\det(T), N)$ is squarefree, then we must have $\gcd(L_TM_T, N_2) =1$. Now the result follows from the observation that \[L_T^{k-1} M_T^{k-3/2} |d_T|^{\frac{k}{2} - \frac{7}{12}} = \frac{(4 \det(T))^{\frac{k-1}{2}}}{M_T^\frac12 |d_T|^\frac{1}{12}} \ll_k \det(T))^{\frac{k-1}{2}}.\]
\end{proof}

\subsection{Bounds for Fourier coefficients using Fourier--Jacobi expansions}\label{s:SKClass}
 The main result of this subsection is the following.
\begin{prop}\label{p:SKClass}
Let $k \ge 4$ be an even integer and
$N$ be an odd square-free positive integer.
Also let $f \in S_{2k-2}^{new}(\Gamma_0(N))$
be a newform and
$F \in S_k(\Gamma_0^{(2)}(N))$ be its
Saito-Kurokawa lift as described in \cite{AB15}.
Assume that $a(T)$ is the $T$-th Fourier coefficient of $F$.
Then, for any $\epsilon > 0$, we have
\begin{equation}\label{e:rssaitoclas}
|a(T)|
~\ll_{F, \epsilon}~
(\det T)^{\frac{k-1}{2}+\epsilon}.
\end{equation}
\end{prop}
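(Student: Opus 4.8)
The plan is to combine the Fourier--Jacobi expansion of $F$ with the ``Maass relation'' characterising Saito--Kurokawa lifts, reducing the problem to a single estimate for the Fourier coefficients of an index-$1$ Jacobi cusp form, which is then handled by the classical theory of half-integral weight forms.

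First I would write $Z = \mat{\tau}{z}{z}{\tau'}\in\H_2$ and expand $F(Z) = \sum_{m\ge 1}\phi_m(\tau,z)e^{2\pi i m\tau'}$, with $\phi_m$ a Jacobi cusp form of weight $k$ and index $m$ of level $N$. Since $F$ is the Saito--Kurokawa lift of $f$ in the sense of \cite{AB15}, the Spezialschar property gives $\phi_m = \phi_1\mid V_m$ for all $m$, where $V_m$ is the index-raising operator of Eichler--Zagier (for odd squarefree $N$ this is part of the construction in \cite{TI12, AB15}). As every $T\in\n_2$ is $\GL_2(\Z)$-equivalent to one of the shape $\mat{n}{r/2}{r/2}{m}$ and $a(T)$ is $\GL_2(\Z)$-invariant, $a(T)$ is the $(n,r)$-th Fourier coefficient of $\phi_m$, and the action of $V_m$ on Fourier coefficients yields
\[ a(T) = \sum_{d\mid\gcd(n,r,m)}d^{k-1}\,c\!\left(\frac{4nm-r^2}{d^2}\right), \]
where for $D>0$ with $-D$ a discriminant, $c(D)$ denotes the Fourier coefficient of $\phi_1$ at any $(n_0,r_0)$ with $4n_0-r_0^2=D$ (well defined since $\phi_1$ has index $1$). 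Because $4nm-r^2 = 4\det T$ and $\gcd(n,r,m)=L_T$, it then suffices to prove
\[ |c(D)|\ll_{F,\epsilon}D^{\frac{k-1}{2}+\epsilon}, \]
since this gives $|a(T)|\le(4\det T)^{\frac{k-1}{2}+\epsilon}\sum_{d\mid L_T}d^{-2\epsilon}\ll_\epsilon(\det T)^{\frac{k-1}{2}+2\epsilon}$.

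To prove $|c(D)|\ll D^{(k-1)/2+\epsilon}$ I would invoke, for $N$ odd squarefree and $k$ even, the Eichler--Zagier/Skoruppa--Zagier isomorphism in its level-$N$ form (via Kohnen's plus-space theory): $\phi_1$ corresponds to a cusp form $g$ of half-integral weight $\kappa=k-\tfrac12$ and level $4N$ whose Shimura lift is a quadratic twist of $f$, and $c(D)$ is, up to the standard dictionary, the $D$-th Fourier coefficient of $g$. The required bound is exactly $|c_g(D)|\ll_{g,\epsilon}D^{\kappa/2-1/4+\epsilon}$, which goes beyond the trivial exponent $\kappa/2$ and follows either from Iwaniec's spectral estimate for Fourier coefficients of half-integral weight cusp forms (which saves a further power of $D$), or from the Waldspurger/Kohnen--Zagier formula $|c_g(D)|^2\asymp_f D^{k-3/2}L(k-1,f\otimes\chi_{-D})$ for fundamental $D$ --- the general case following from the Shimura--Kohnen relation expressing $c_g$ at arbitrary $D$ through its values at fundamental discriminants and the Hecke eigenvalues of $f$ --- together with the convexity bound $L(k-1,f\otimes\chi_{-D})\ll_{f,\epsilon}D^{1/2+\epsilon}$. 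The latter route is essentially the argument behind Proposition \ref{p:fundSK}, here fed into the Maass relation rather than into Bessel models. One can also argue intrinsically with $\phi_1$: a Rankin--Selberg unfolding bounds $c(D)$ by the Petersson norm of $\phi_1$ and a power of $D$, and it is bounds of this shape --- relating the Petersson norm, Fourier coefficients, and index of a Jacobi cusp form --- combined with the reduction $\phi_m=\phi_1\mid V_m$, that drive the argument.

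The main obstacle is precisely the estimate $|c(D)|\ll D^{(k-1)/2+\epsilon}$: this is the input that forces the exponent $\frac{k-1}{2}$ rather than the trivial $\frac{k}{2}$, and it requires either convexity for the family of quadratic twists $L(k-1,f\otimes\chi_{-D})$ or Iwaniec's spectral bound --- the surrounding manipulations are elementary. A secondary but necessary point is to check that for odd squarefree $N$ and even $k$ the plus-space/index-$1$-Jacobi dictionary and the $V_m$-relation hold with the normalisations of \cite{AB15}; this is where the hypotheses on $N$ and $k$ enter. Finally, one really must use the index-$1$ reduction: bounding the coefficients of $\phi_m$ directly, by combining the trivial sup-norm estimate with a polynomial-in-$m$ bound for $\|\phi_m\|$, loses a positive power of $m$ and is too weak, since $\det T$ may remain bounded while the index $m$ grows.
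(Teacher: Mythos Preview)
Your argument is correct and takes a genuinely different route from the paper. The paper does \emph{not} reduce to $\phi_1$ via the Maass relation. Instead it first replaces $T$ by a $\GL_2(\Z)$-equivalent matrix with $m=\min T\ll(\det T)^{1/2}$, then applies Bringmann's bound
\[
|c(n,r)|\ll_{k,\epsilon}\bigl(m^{3/4}+|D|^{1+\epsilon}\bigr)^{1/2}\frac{|D|^{k/2-1}}{m^{(k-1)/2}}\,\|\phi_m\|
\]
to the coefficients of $\phi_m$, and finally uses the Saito--Kurokawa structure (via \cite[Prop.~4.6]{AB15} and Deligne) to prove $\|\phi_m\|^2\ll_{F,\epsilon} m^{k-1+\epsilon}$. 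Combining these with $m\ll|D|^{1/2}$ gives exactly $|a(T)|\ll(\det T)^{(k-1)/2+\epsilon}$. Your approach instead pushes everything down to $\phi_1$ and then needs the non-trivial input $|c(D)|\ll D^{(k-1)/2+\epsilon}$ for a weight-$(k-\tfrac12)$ cusp form; the paper's approach replaces this by the Petersson norm estimate for $\phi_m$, which is of comparable depth (both ultimately rest on Deligne for $f$, while your route additionally needs Waldspurger/Kohnen--Zagier plus convexity in the $D$-aspect).

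One correction: your closing remark that ``bounding the coefficients of $\phi_m$ directly\ldots loses a positive power of $m$ and is too weak, since $\det T$ may remain bounded while the index $m$ grows'' is mistaken, and in fact dismisses precisely the approach the paper takes. The point you are missing is the reduction step $m=\min T\ll(\det T)^{1/2}$: once this is in place, the factor $m^{-(k-1)/2}$ in Bringmann's bound exactly cancels the $m^{(k-1)/2}$ coming from $\|\phi_m\|$, and $m^{3/4}\ll|D|^{3/8}$ is harmless. So both strategies work; yours is more direct in its use of the Spezialschar, while the paper's highlights the role of Jacobi Petersson norms.
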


 The statement of the above proposition refers to the work of Agarwal and Brown \cite{AB15} who, building upon work of Ibukiyama \cite{TI12}, wrote down a classical construction of Saito-Kurokawa lifts obtained from elliptic Hecke eigenforms of odd squarefree level $N$, and explicated some key properties of these lifts. In the next lemma, we show that the Saito-Kurokawa lifts constructed by Agarwal--Brown coincide with the newforms in $S_k^*(\G_0^{(2)}(N))$ in our sense.

\begin{lem}\label{l:SKclassautoequiv}Let $k \ge 2$ be even, let $N$ be an odd squarefree integer, and let $f \in S_{2k-2}^{new}(\Gamma_0(N))$ be a newform\footnote{Recall that a newform is a Hecke eigenform lying in the newspace.}; let $\pi_f$ be the automorphic representation of $\GL_2(\A)$ generated by $f$. Let $F \in S_k(\Gamma_0^{(2)}(N))$ be the
Saito-Kurokawa lift of $f$ as described in \cite{AB15}.

Then $F \in S_k^*(\G_0^{(2)}(N))$ and satisfies  $L^N(s, \pi_F) = L^N(s, \pi_f) \zeta^N(s+1/2)  \zeta^N(s-1/2)$. Furthermore, $F$ is a newform in the sense of \cite[Section 3.2]{DPSS20}.
Moreover, if $M$ is an integer and $G$ is a newform in $S_k^*(\G_0^{(2)}(M))$ satisfying $L^M(s, \pi_G) = L^M(s, \pi_f) \zeta^M(s+1/2)  \zeta^M(s-1/2)$ then $M=N$ and $G=cF$ for some non-zero complex number $c$.
\end{lem}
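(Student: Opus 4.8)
The plan is to pin down the automorphic representation generated by the Agarwal--Brown lift $F$ by comparing Hecke eigenvalues at good primes, and then to read off everything --- membership in $S_k^*(\G_0^{(2)}(N))$, the spinor $L$-function identity, the newform property, and the uniqueness --- from Proposition \ref{p:lfunctionsfacts}, Lemma \ref{l:SK properties}, strong multiplicity one for $\GL_2$, and multiplicity one for $\GSp_4$.

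First I would recall from \cite{AB15} (building on \cite{TI12}) that $F$ is a non-zero cusp form in $S_k(\Gamma_0^{(2)}(N))$ which is a Hecke eigenform at all good primes, with normalized eigenvalues $\lambda_F(p) = \lambda_f(p) + p^{1/2} + p^{-1/2}$ for all $p \nmid N$. Since $\pi_f$ satisfies Ramanujan, $|\lambda_f(p)| \le 2$, so $\lambda_F(p) = p^{1/2} + O(1) > 0$ for all large $p$. Going through the five cases of Proposition \ref{p:lfunctionsfacts}: general type is excluded since there $|\lambda_F(p)| \le 5$ when $k \ge 3$ and $|\lambda_F(p)| \ll p^{9/22}$ when $k=2$ (Remark \ref{l:weakramweight1}), both incompatible with $\lambda_F(p) \sim p^{1/2}$; Yoshida type is excluded since there $|\lambda_F(p)| \le 4$; and Soudry and Howe--Piatetski-Shapiro types are excluded since in those cases $\lambda_F(p) = 0$ for infinitely many $p$ (those inert in the relevant CM field, respectively split in one and inert in the other of the two quadratic fields). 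Hence $F$ is of Saito--Kurokawa type, and since $N$ is squarefree Proposition \ref{p:lfunctionsfacts} forces the character $\chi_0$ to be trivial, so $F$ is of classical Saito--Kurokawa type; being itself a Hecke eigenform at good primes, $F \in S_k^*(\G_0^{(2)}(N))$. Letting $\pi_0$ be the $\GL_2(\A)$-representation attached to $F$ by case \ref{case:SK} of Proposition \ref{p:lfunctionsfacts}, comparison of Hecke eigenvalues gives $\lambda_0(p) = \lambda_F(p) - p^{1/2} - p^{-1/2} = \lambda_f(p)$ for all $p \nmid N$, so strong multiplicity one for $\GL_2$ yields $\pi_0 \simeq \pi_f$, and hence $L^N(s, \pi_F) = L^N(s, \pi_f)\zeta^N(s+1/2)\zeta^N(s-1/2)$. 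In particular the conductor $N_0$ of $\pi_0 = \pi_f$ equals $N$, so by the last paragraph of Lemma \ref{l:SK properties} (using that $N$ is squarefree) $F$ gives rise to an irreducible representation and is a newform in the sense of \cite[Section 3.2]{DPSS20}.

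For the uniqueness statement, let $G \in S_k^*(\G_0^{(2)}(M))$ be a newform with $L^M(s, \pi_G) = L^M(s, \pi_f)\zeta^M(s+1/2)\zeta^M(s-1/2)$. Running the same argument, $\pi_G$ is a cuspidal representation of classical Saito--Kurokawa type whose associated $\GL_2$-representation $\pi_0'$ has the same Satake parameters as $\pi_f$ away from $MN$, whence $\pi_0' \simeq \pi_f$ by strong multiplicity one. Next I would use Lemma \ref{l:SK properties} together with Arthur's multiplicity formula --- exactly the sign analysis already carried out in the proof of case \ref{case:SK} of Proposition \ref{p:lfunctionsfacts} --- to identify all local components of $\pi_G$: at $p \nmid N$ it is the spherical representation of type IIb determined by $\pi_{f,p}$; at each $p \mid N$ it is the type Vb or VIb representation determined by $\pi_{f,p}$ (the other member of the two-element local Saito--Kurokawa packet being ruled out by the sign condition); and at $\infty$ it is the holomorphic discrete series of weight $k$. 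These coincide with the local components of $\pi_F$, so $\pi_G \simeq \pi_F$, and by multiplicity one for $\GSp_4$ (\cite{JA13}) they span the same subspace of the space of cusp forms. Since $F$ is a newform of level $N$, the $\Gamma_0^{(2)}$-conductor of $\pi_F$ equals $N$, so $G$ --- a newvector in the same representation --- also has level $N$, giving $M=N$; moreover the space of weight-$k$ holomorphic vectors in $\pi_F$ fixed by $\prod_{p\mid N}K_{0,p}(N)\prod_{p\nmid N}\GSp_4(\Z_p)$ is one-dimensional, since each relevant local space of fixed vectors is one-dimensional (Table A.15 of \cite{NF}), so $G = cF$ for some non-zero $c \in \C$.

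The step I expect to be most delicate is the identification of the local components of $\pi_G$ at the primes dividing $N$ in the uniqueness part: one must verify, using Table 2 of \cite{schcap}, Table A.15 of \cite{NF}, and Arthur's multiplicity formula, that within the local Saito--Kurokawa packet attached to $\pi_{f,p}$ exactly one member actually occurs in a global discrete-spectrum form and that it carries a one-dimensional space of $K_{0,p}(N)$-fixed vectors --- this is the same computation performed for $F$ in the proof of Lemma \ref{l:SK properties}, but here it has to be driven in reverse, starting from the hypothetical newform $G$ and recovering its level. By contrast the first half --- fixing the near-equivalence class of $F$ from the growth of $\lambda_F(p)$ and applying strong multiplicity one --- is entirely routine given Proposition \ref{p:lfunctionsfacts}.
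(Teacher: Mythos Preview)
Your argument is correct, but it takes a longer road than the paper's. The paper simply cites \cite[Theorem 3.5]{AB15} to obtain the spinor $L$-function identity $L^N(s,\pi)=L^N(s,\pi_f)\zeta^N(s+1/2)\zeta^N(s-1/2)$ directly; since this is precisely the defining property of classical Saito--Kurokawa type (case \ref{case:SK} of Proposition \ref{p:lfunctionsfacts} with $\pi_0=\pi_f$ and $\chi_0=1$), membership in $S_k^*(\G_0^{(2)}(N))$ follows by definition, with no need for your case-by-case exclusion via growth of $\lambda_F(p)$. Your approach buys nothing extra here --- the case analysis is an unnecessary detour --- though it is perfectly valid. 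For the uniqueness, the paper again argues more briskly: once $\pi_0'\simeq\pi_f$ is known, Lemma \ref{l:SK properties} (applied to $G$ at level $M$) already pins down all local components of $\pi_G$ from the existence of $K_{0,p}(M)$-fixed vectors, so there is no need to invoke Arthur's sign condition to rule out the other member of the local packet --- the invariance under the Siegel congruence subgroup does that for free. Your explicit appeal to the multiplicity formula is harmless but redundant; the paper's proof is essentially just repeated application of Lemma \ref{l:SK properties}.
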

\begin{proof}Let $\pi \in \Pi(F)$. The fact that $L^N(s, \pi) = L^N(s, \pi_f) \zeta^N(s+1/2)  \zeta^N(s-1/2)$ follows from Theorem 3.5 of \cite{AB15} (observe that our normalization of the $L$-functions differ from theirs).  So by definition $F \in S_k^*(\G_0^{(2)}(N))$, and since $N$ is squarefree, Lemma \ref{l:SK properties} tells us that $\pi=\pi_F$ is irreducible and $F$ is a newform.
Next, let $G$ be as in the statement of the lemma. Since $G$ is a newform, Lemma \ref{l:SK properties} tells us that $M=N$.  Let $\pi'$ be the representation generated by the adelization of $G$. Applying Lemma \ref{l:SK properties} to $F$ and $G$, we see that the local components of $\pi$ and $\pi'$ are isomorphic at all finite primes, and hence at all places (since they are clearly isomorphic at infinity). It follows that the spaces of $\pi$ and $\pi'$ coincide. Since these representations have a 1-dimensional space of $K_{0,p}(N)$-fixed vectors at each prime $p$, it follows that the adelizations of $F$ and $G$ are multiples of each other. Hence $F$ and $G$ are multiples of each other.
\end{proof}

The rest of the subsection is devoted to the proof of Proposition \ref{p:SKClass}. We will assume throughout that $k \ge 4$ is an even integer and $N$ is an odd squarefree integer.
For any $F \in S_k(\G_0^{(2)}(N))$, the
Fourier--Jacobi expansion of $F$ is given by
$$
F(Z)
~:=~
\sum_{m =1}^\infty \phi_m(\tau, z) e^{2\pi im\tau'},
$$
where $Z = \begin{pmatrix} \tau & z \\ z & \tau' \end{pmatrix} \in \h_2$
with $\tau, \tau' \in\h$ and $z \in \C$. It is well-known \cite{EZ85, GS17}
that $\phi_m \in J_{k, m}^{cusp}(N)$, where $J_{k,m}^{cusp}(N)$ denotes
the space of Jacobi cusp form of weight $k$, index $m$ for the group
$\Gamma^J(N) := \Gamma_0(N) \ltimes \Z^2$. The following result of
Bringmann \cite[Theorem 3.41]{KB04} (see also Kohnen \cite{WK92, WK93}) on the
size of Fourier coefficients of a Jacobi cusp form is crucial for us.

\begin{prop}[{\cite[Theorem 3.41]{KB04}}]\label{WK92}
Let $\phi \in J_{k,m}^{cusp}(N)$ with Fourier coefficients $c(n,r)$. Then
$$
|c(n, r)|
~\ll_{\epsilon, k}~
\left( m^{3/4} + |D|^{1+\epsilon}\right)^\frac{1}{2}
\cdot
\frac{|D|^{k/2 - 1}}{m^\frac{k-1}{2}}
\cdot
\parallel \phi \parallel,
$$
where $\parallel \phi \parallel$ denotes the Petersson norm of $\phi$
which is defined by
$$
\parallel \phi \parallel^2
~=~
\langle \phi, \phi \rangle
:=
\frac{1}{[\G^J(1) : \G^J(N)]}
\int_{\Gamma^J(N) \backslash \h \times \C} | \phi(\tau, z)|^2 v^{k-3} e^{-4\pi my^2/v} dudvdxdy,
$$
with $\tau = u + iv \in \h$ and $z = x + iy \in \C$ and $D := r^2 - 4mn$.
\end{prop}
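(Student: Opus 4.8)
The plan is to follow the classical Rankin--Selberg/Petersson method, adapted to the Jacobi group $\G^J(N)$: realize each Fourier coefficient as a Petersson inner product against a Jacobi--Poincar\'e series, apply Cauchy--Schwarz, and then reduce the whole problem to estimating the self-inner-product (equivalently, a single ``diagonal'' Fourier coefficient) of that Poincar\'e series. This is the approach underlying the results of Eichler--Zagier \cite{EZ85} and Bringmann \cite{KB04}; an alternative via the theta decomposition is indicated at the end.

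Fix $(n,r)$ with $D = r^2 - 4mn < 0$ and form the Jacobi--Poincar\'e series $P_{k,m,(n,r)}$ of weight $k$ and index $m$ for $\G^J(N)$, with seed $e^{2\pi i(n\tau + rz)}$ summed over $\G^J_\infty\backslash\G^J(N)$, where $\G^J_\infty$ is the stabiliser of the seed (the upper-triangular translations together with the $\mu$-Heisenberg translations). Unfolding $\langle \phi, P_{k,m,(n,r)}\rangle$ over a fundamental domain, carrying out the Gaussian integral in $y$ and then the Euler integral in $v$, yields the reproducing identity
\[
\langle \phi, P_{k,m,(n,r)}\rangle = \lambda_{k,m,D}\, c(n,r), \qquad \lambda_{k,m,D} \asymp_{k} \frac{m^{k-2}}{|D|^{k-3/2}},
\]
the key point being the computation $4\pi n - \pi r^2/m = \pi|D|/m$, which governs the exponential decay and produces the factor $\G(k-3/2)\big(m/(\pi|D|)\big)^{k-3/2}$ (the extra $m^{-1/2}$ coming from the Gaussian in $y$). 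Cauchy--Schwarz together with a second application of this identity to $\phi = P_{k,m,(n,r)}$ (so that $\|P_{k,m,(n,r)}\|^2 = \lambda_{k,m,D}\, g(n,r)$) then gives
\[
|c(n,r)|^2 \le \frac{\|\phi\|^2\, \|P_{k,m,(n,r)}\|^2}{\lambda_{k,m,D}^2} = \|\phi\|^2\, \frac{g(n,r)}{\lambda_{k,m,D}},
\]
where $g(n,r)$ is the $(n,r)$-th Fourier coefficient of the Poincar\'e series itself.

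It therefore suffices to estimate $g(n,r)$, for which I would insert the explicit Fourier expansion of the Jacobi--Poincar\'e series (the Jacobi analogue of Petersson's formula, as in \cite{EZ85, KB04}). This expresses $g(n,r)$ as a diagonal term from the identity coset plus an off-diagonal sum, over moduli $c$ divisible by $N$, of generalised (Sali\'e-type) Jacobi--Kloosterman sums weighted by Bessel functions $J_{k-3/2}\!\big(\pi|D|/(mc)\big)$. Comparing with $\lambda_{k,m,D}\asymp_k m^{k-2}|D|^{-(k-3/2)}$, the target bound $\big(m^{3/4}+|D|^{1+\epsilon}\big)^{1/2}|D|^{k/2-1}m^{-(k-1)/2}$ is equivalent to the estimate
\[
g(n,r) \ll_{k,\epsilon}\ m^{-1/4}|D|^{-1/2} + m^{-1}|D|^{1/2+\epsilon},
\]
with the first summand tracking the diagonal contribution ($m^{3/4}$ part) and the second the off-diagonal contribution ($|D|^{1+\epsilon}$ part).

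The main obstacle is precisely this Kloosterman--Bessel estimation. I would bound the Jacobi--Kloosterman sums by Weil's bound and split the modulus sum at the Bessel transition $c \asymp \max(1,|D|/m)$, using $J_{k-3/2}(x)\ll_k \min\big(x^{k-3/2},\, x^{-1/2}\big)$: the small-modulus (oscillatory) range $c\lesssim |D|/m$, where $J_{k-3/2}(x)\ll x^{-1/2}=(mc/|D|)^{1/2}$, produces the hybrid $|D|^{1+\epsilon}$ term, while the complementary range and the diagonal produce the $m^{3/4}$ term; balancing the two gives the stated factor $\big(m^{3/4}+|D|^{1+\epsilon}\big)^{1/2}$. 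The level $N$ enters only through the congruence restriction on $c$ and the exact shape of the Kloosterman sums, and is absorbed by routine bookkeeping, which is why the final constant depends only on $k$ and $\epsilon$. As an alternative route, one may use the theta decomposition $\phi = \sum_{\mu \,(2m)} h_\mu \,\theta_{m,\mu}$ to reduce to Fourier-coefficient bounds for the weight-$(k-1/2)$ vector-valued cusp form $(h_\mu)$ attached to the Weil representation; the same Poincar\'e-series estimation then runs in the metaplectic setting, with the index $m$ entering through the relation between $\|\phi\|$ and the norm of $(h_\mu)$.
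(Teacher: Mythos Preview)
The paper does not supply its own proof of this proposition; it is quoted verbatim from Bringmann \cite[Theorem 3.41]{KB04} (building on Kohnen \cite{WK92, WK93}) and used as a black box. Your sketch via Jacobi--Poincar\'e series, the reproducing identity, Cauchy--Schwarz, and then Weil-bound/Bessel estimation of the diagonal Fourier coefficient is exactly the method those sources use, and the outline is correct.
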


We need the following proposition whose proof is similar to
Kohnen and Sengupta \cite[Theorem 2]{KS17} (also see \cite[Section 6]{KP21})
 and relies on a result of Agarwal--Brown \cite{AB15} (see also \cite{MRV93, MR00, MR02}).

\begin{prop}
Let $k$ be a positive even integer and
$N$ be odd square-free positive integer.
Also let $f \in S_{2k-2}^{new}(\Gamma_0(N))$
be a newform and
$F \in S_k(\Gamma_0^{(2)}(N))$ be its
Saito-Kurokawa lift as described in \cite{AB15}.
Assume that $F$ has Fourier--Jacobi
coefficients $\{ \phi_m \}_{m \in \N}$.
Then, for any $\epsilon > 0$, we have
\begin{equation}\label{KS17}
\langle \phi_m, \phi_m \rangle
\ll_{F, \epsilon}
m^{k-1+\epsilon}.
\end{equation}
\end{prop}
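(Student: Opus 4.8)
The plan is to adapt the method of Kohnen--Sengupta \cite[Theorem~2]{KS17} (see also \cite[Section~6]{KP21}), which proves exactly this estimate in the full-level case $N=1$, replacing the Eichler--Zagier machinery \cite{EZ85} by the level-$N$ construction of Agarwal--Brown \cite{AB15} (which itself rests on the Shimura--Kohnen correspondence in level $N$, cf. \cite{MRV93, MR00, MR02}, and on Ibukiyama \cite{TI12}). By linearity we may assume that $F$ is the Saito--Kurokawa lift of a single newform $f \in S_{2k-2}^{new}(\Gamma_0(N))$. Recall from \cite{AB15} that $f$ corresponds to a half-integral weight Hecke eigenform $g$ of weight $k-\tfrac12$ under the Shimura--Kohnen correspondence, that $g$ corresponds to a Jacobi eigenform $\phi_1 \in J_{k,1}^{cusp}(N)$ under the Eichler--Zagier-type isomorphism, and that $F$ is the Maass (Gritsenko) lift of $\phi_1$; in particular $\phi_m = \phi_1|V_m$ for every $m$, where $V_m \colon J_{k,1}^{cusp}(N) \to J_{k,m}^{cusp}(N)$ is the index-raising operator. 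Thus it suffices to prove $\langle \phi_1|V_m, \phi_1|V_m\rangle \ll_{F,\epsilon} m^{k-1+\epsilon}$.

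To bound this norm I would relate it to a weighted second moment of the Fourier coefficients of $F$. Via the theta decomposition together with the Rankin--Selberg method --- unfolding $\langle\phi_m,\phi_m\rangle$ against a real-analytic Eisenstein series, the Jacobi Petersson integral not reducing to a bare Fourier series because of the $\Gamma_0(N)$-quotient --- one finds that $\langle\phi_m,\phi_m\rangle$ is comparable, up to a constant depending only on $k$ and $N$ and a fixed power of $m$, to the residue at $s=k-\tfrac12$ of $\sum_{n,r\,:\,4nm>r^{2}} |a(T_{n,r})|^{2}(4nm-r^{2})^{-s}$, where $T_{n,r}=\mat{n}{r/2}{r/2}{m}$ and $a(\cdot)$ are the Fourier coefficients of $F$. (Alternatively one can proceed through the Hecke theory of Jacobi forms, reducing $\langle\phi_1|V_m,\phi_1|V_m\rangle$ to explicit local computations --- at $m=p^{\nu}$, an expression in $p$, $p^{k}$ and the Hecke eigenvalue $\lambda_f(p)$ --- which Deligne's bound controls by $p^{\nu(k-1)+\epsilon}$.)

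Next I would insert the Maass relation for the Saito--Kurokawa lift: by Lemma~\ref{l:SKkey} together with \cite[Lemma~9]{marzec21} and \eqref{e:nonfunddecomp}, the coefficient $a(T)$ depends only on $\disc(T)$ and the content $L_T$, and $a(T)=\sum_{d\mid L_T} d^{k-1}\,c_g\big(|\disc(T)|/d^{2}\big)$, where $c_g$ are the Fourier coefficients of $g$, extended to all admissible arguments by half-integral weight Hecke theory. Using the Shimura relations to control $c_g(|d_T|M_T^{2}e^{2})$ in terms of $c_g(|d_T|)$ and the Hecke eigenvalues of $f$, together with Deligne's bound $|\lambda_f(n)|\le d(n)$ (valid since $f$ is a holomorphic newform of weight $2k-2$), one obtains $|a(T)|\ll_{F,\epsilon} L_T^{k-1+\epsilon}M_T^{k-3/2+\epsilon}\,|c_g(|d_T|)|$. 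Plugging this in and sorting the pairs $(n,r)$ according to $\delta:=\gcd(n,r,m)\mid m$ --- writing $T_{n,r}=\delta T'$ with $T'$ of content $1$, so that $\disc(T_{n,r})=\delta^{2}\disc(T')$, $d_{T_{n,r}}=d_{T'}$ and $M_{T_{n,r}}=M_{T'}$ --- the powers of $M_{T'}$ cancel against the weight $(4nm-r^{2})^{-s}$, and the $\delta$-dependence collapses: the block $\delta=m$ contributes $\asymp m^{k-1+\epsilon}$ times (a constant times) $\langle\phi_1,\phi_1\rangle<\infty$, while the blocks with $\delta<m$ contribute strictly less; summing $\sum_{\delta\mid m}\delta^{k-1+\epsilon}\ll m^{k-1+\epsilon}$ then completes the argument. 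The heuristic is transparent: the binary forms $T$ of content $m$ carry the factor $m^{k-1}$ from the $d^{k-1}$ term in the Maass relation, and it is precisely these that force the exponent $k-1$.

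The main obstacle is the honest bookkeeping in the last two steps: (i) deriving the Rankin--Selberg identity for level-$N$ Jacobi forms with the \emph{correct} normalising power of $m$, which requires the Eisenstein-series regularisation rather than a naive unfolding; and (ii) carrying out the divisor sums tightly enough to lose only $m^{\epsilon}$ rather than $m^{O(1)}$ --- this is where Deligne's bound for $f$, together with a little harmonic analysis (or, in the alternative route, an exact evaluation of the Jacobi--Hecke norm formula at prime powers), are genuinely needed. At the finitely many primes $p\mid N$ one must know that $V_p$ and the local components $\pi_p$ behave as at good primes; this follows from the Agarwal--Brown construction and Lemma~\ref{l:SK properties} (using that $N$ is squarefree, so that $F$ is a newform and each $\pi_p$ is of type Vb or VIb), and it is here that the hypothesis ``$N$ odd and squarefree'' enters.
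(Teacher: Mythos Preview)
Your parenthetical ``alternative route'' is in fact the paper's entire proof, and it is far simpler than you seem to realize. The paper quotes directly from \cite[Proposition~4.6 and p.~659]{AB15} the closed-form identity
\[
\frac{\langle \phi_m, \phi_m \rangle}{\langle \phi_1, \phi_1 \rangle}
~=~ \sum_{\substack{d \mid m \\ (d,N)=1}} \alpha(d)\, d^{k-2}\, \lambda_f(m/d),
\qquad
\alpha(d) = d \prod_{p \mid d}\Bigl(1 + \frac{1}{p}\Bigr),
\]
and then bounds the right-hand side in two lines using $\alpha(d) \le d\,\sigma_0(d)$ together with Deligne's bound $|\lambda_f(n)| \le n^{k-3/2}\sigma_0(n)$ (note the convention: here $\lambda_f$ is the \emph{unnormalized} Hecke eigenvalue). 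That is the whole argument --- no Rankin--Selberg unfolding, no Maass relation, no sorting by content.

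Your main route --- unfolding $\langle\phi_m,\phi_m\rangle$ against an Eisenstein series, inserting the Maass relation for $|a(T)|$, and grouping by $\delta = \gcd(n,r,m)$ --- is not wrong in spirit, but it is circuitous, and the obstacles you yourself flag are genuine. Bounding a residue at $s = k - \tfrac12$ by termwise bounds on the Dirichlet coefficients does not make sense without additional analytic input (the series diverges there), and extracting the ``correct normalising power of~$m$'' is precisely what the Agarwal--Brown identity already packages. In effect your main route amounts to re-deriving \cite[Proposition~4.6]{AB15} from scratch; the paper simply cites it. So: drop the Rankin--Selberg machinery, promote your alternative to the main argument, and observe that the Jacobi--Hecke norm computation you allude to has already been done for you in the reference.
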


\begin{proof}
For any positive integer $m$, \cite[Proposition 4.6 and p.659]{AB15}
implies
\begin{equation}\label{SK-level}
\frac{\langle \phi_m, \phi_m \rangle}{\langle \phi_1, \phi_1 \rangle}
 ~=
\sum_{d~|~m, \atop (d, N)=1} \alpha(d)d^{k-2}\lambda_f(m/d),
\end{equation}
where $\lambda_f(m)$ is the Hecke eigenvalue corresponding
to $f$ and hence by Deligne's bound we have
$| \lambda_f(m) | \le m^{k-3/2}\sigma_0(m)$.
Here $\sigma_0(m)$ denotes the number of
positive divisors of $m$. Further, $\alpha(d)$
is given by
$$
\alpha(d) := d\prod_{p|d}\left( 1+\frac{1}{p} \right) \le d \sigma_0(d).
$$
Therefore, one can easily see from \eqref{SK-level} that
$$
\frac{\langle \phi_{m}, \phi_{m} \rangle}{\langle \phi_1, \phi_1 \rangle}
 \le
m^{k-3/2}\sum_{d~|~m, \atop (d, N)=1} d^{1/2}\sigma_0(d)\sigma_0(m/d)
\ll_\epsilon
m^{k-1+\epsilon}\sum_{d~|~m, \atop (d, N)=1} 1
\ll_\epsilon
m^{k-1+\epsilon}.
$$
This completes the proof.
\end{proof}

\begin{proof}[Proof of Prop. \ref{p:SKClass}]The rest of the proof is similar to that of Kohnen \cite{WK92}, mutatis mutandis.
We sketch the proof here for convenience of reader.
Since both sides of \eqref{e:rssaitoclas} are invariant under $T \mapsto U^tTU$
for any $U \in {\rm GL}_2(\Z)$, we can assume that
$T = \begin{pmatrix} n & r/2\\ r/2 & m \end{pmatrix}$ is such that
$m = min~T$, where $min~T$ is the least positive integer
represented by $T$.
Now observe that $a(T)$ is the $(n,r)$-th Fourier coefficient of $\phi_m$.
Using \propref{WK92}, estimate \eqref{KS17} and $m = min~T ~\ll~ ({\rm det}~T)^{1/2}$
we complete the proof.
\end{proof}

\begin{rmk}
We would like to mention that if one could prove \eqref{KS17} for
$F \in S_k(\G_0^{(2)}(N))$ which is not of classical Saito--Kurokawa
type, that would lead to an improvement of the current best known
unconditional result \cite{WK92}.
\end{rmk}

\bibliography{FHS}{}

\begin{thebibliography}{KLSW10}

\bibitem[AB15]{AB15}
Mahesh Agarwal and Jim Brown.
\newblock Saito-{K}urokawa lifts of square-free level.
\newblock {\em Kyoto J. Math.}, 55(3):641--662, 2015.

\bibitem[And74]{AA74}
A.~N. Andrianov.
\newblock Euler products that correspond to {S}iegel's modular forms of genus
  {$2$}.
\newblock {\em Uspehi Mat. Nauk}, 29(3 (177)):43--110, 1974.

\bibitem[And09]{AA09}
Anatoli Andrianov.
\newblock {\em Introduction to {S}iegel modular forms and {D}irichlet series}.
\newblock Universitext. Springer, New York, 2009.

\bibitem[Art13]{JA13}
James Arthur.
\newblock {\em The endoscopic classification of representations}, volume~61 of
  {\em American Mathematical Society Colloquium Publications}.
\newblock American Mathematical Society, Providence, RI, 2013.
\newblock Orthogonal and symplectic groups.

\bibitem[AZ95]{andzhu}
A.~N. Andrianov and V.~G. Zhuravl{\"e}v.
\newblock {\em Modular forms and {H}ecke operators}, volume 145 of {\em
  Translations of Mathematical Monographs}.
\newblock American Mathematical Society, Providence, RI, 1995.
\newblock Translated from the 1990 Russian original by Neal Koblitz.

\bibitem[BH17]{BH17}
C.~J. Bushnell and G.~Henniart.
\newblock Strong exponent bounds for the local {R}ankin-{S}elberg convolution.
\newblock {\em Bull. Iranian Math. Soc.}, 43(4):143--167, 2017.

\bibitem[Bor76]{Borel1976}
Armand Borel.
\newblock Admissible representations of a semi-simple group over a local field
  with vectors fixed under an {I}wahori subgroup.
\newblock {\em Invent. Math.}, 35:233--259, 1976.

\bibitem[BR88]{BR88}
S.~B\"{o}cherer and S.~Raghavan.
\newblock On {F}ourier coefficients of {S}iegel modular forms.
\newblock {\em J. Reine Angew. Math.}, 384:80--101, 1988.

\bibitem[Bre99]{SB99}
Stefan Breulmann.
\newblock On {H}ecke eigenforms in the {M}aa{\ss} space.
\newblock {\em Math. Z.}, 232(3):527--530, 1999.

\bibitem[Bri04]{KB04}
K.~Bringmann.
\newblock {\em Applications of Poincar\'e series on {J}acobi groups}.
\newblock PhD Thesis, University of Heidelberg, 2004.

\bibitem[Bro10]{JB10}
Jim Brown.
\newblock The first negative {H}ecke eigenvalue of genus 2 {S}iegel cuspforms
  with level {$N\ge1$}.
\newblock {\em Int. J. Number Theory}, 6(4):857--867, 2010.

\bibitem[CGK15]{CGK15}
YoungJu Choie, Sanoli Gun, and Winfried Kohnen.
\newblock An explicit bound for the first sign change of the {F}ourier
  coefficients of a {S}iegel cusp form.
\newblock {\em Int. Math. Res. Not. IMRN}, (12):3782--3792, 2015.

\bibitem[CK21]{CK21}
Peter~J. Cho and Henry~H. Kim.
\newblock The smallest prime in a conjugacy class and the first sign change for
  automorphic {$L$}-functions.
\newblock {\em Proc. Amer. Math. Soc.}, 149(3):923--933, 2021.

\bibitem[Cor19]{AC19}
Andrew Corbett.
\newblock An explicit conductor formula for {${\rm GL}_n\times{\rm GL}_1$}.
\newblock {\em Rocky Mountain J. Math.}, 49(4):1093--1110, 2019.

\bibitem[Das13]{SD13}
Soumya Das.
\newblock On the natural densities of eigenvalues of a {S}iegel cusp form of
  degree 2.
\newblock {\em Int. J. Number Theory}, 9(1):9--15, 2013.

\bibitem[DK13]{DK13}
Soumya Das and Winfried Kohnen.
\newblock Some remarks on the {R}esnikoff-{S}alda\~{n}a conjecture.
\newblock In {\em The legacy of {S}rinivasa {R}amanujan}, volume~20 of {\em
  Ramanujan Math. Soc. Lect. Notes Ser.}, pages 153--161. Ramanujan Math. Soc.,
  Mysore, 2013.

\bibitem[DK18]{DK18}
Soumya Das and Winfried Kohnen.
\newblock On sign changes of eigenvalues of {S}iegel cusp forms of genus 2 in
  prime powers.
\newblock {\em Acta Arith.}, 183(2):167--172, 2018.

\bibitem[DKS14]{DKS14}
Soumya Das, Winfried Kohnen, and Jyoti Sengupta.
\newblock On a convolution series attached to a {S}iegel {H}ecke cusp form of
  degree 2.
\newblock {\em Ramanujan J.}, 33(3):367--378, 2014.

\bibitem[DPSS20]{DPSS20}
Martin Dickson, Ameya Pitale, Abhishek Saha, and Ralf Schmidt.
\newblock Explicit refinements of {B}\"{o}cherer's conjecture for {S}iegel
  modular forms of squarefree level.
\newblock {\em J. Math. Soc. Japan}, 72(1):251--301, 2020.

\bibitem[EZ85]{EZ85}
Martin Eichler and Don Zagier.
\newblock {\em The theory of {J}acobi forms}, volume~55 of {\em Progress in
  Mathematics}.
\newblock Birkh\"auser Boston Inc., Boston, MA, 1985.

\bibitem[FM21]{FurMori}
Masaaki Furusawa and Kazuki Morimoto.
\newblock Refined global {G}ross-{P}rasad conjecture on special {B}essel
  periods and {B}\"{o}cherer's conjecture.
\newblock {\em J. Eur. Math. Soc. (JEMS)}, 23(4):1295--1331, 2021.

\bibitem[Fre91]{Fr1991}
Eberhard Freitag.
\newblock {\em Singular modular forms and theta relations}, volume 1487 of {\em
  Lecture Notes in Mathematics}.
\newblock Springer-Verlag, Berlin, 1991.

\bibitem[GI14]{GI14}
Wee~Teck Gan and Atsushi Ichino.
\newblock Formal degrees and local theta correspondence.
\newblock {\em Invent. Math.}, 195(3):509--672, 2014.

\bibitem[GKP19]{GKP19}
Sanoli Gun, Balesh Kumar, and Biplab Paul.
\newblock The first simultaneous sign change and non-vanishing of {H}ecke
  eigenvalues of newforms.
\newblock {\em J. Number Theory}, 200:161--184, 2019.

\bibitem[GKP21]{GKP21}
Sanoli Gun, Winfried Kohnen, and Biplab Paul.
\newblock Arithmetic behaviour of {H}ecke eigenvalues of {S}iegel cusp forms of
  degree two.
\newblock {\em Ramanujan J.}, 54(1):43--62, 2021.

\bibitem[GMP21]{GMP21}
Sanoli Gun, V.~Kumar Murty, and Biplab Paul.
\newblock Distinguishing newforms by their {H}ecke eigenvalues.
\newblock {\em Res. Number Theory}, 7(3):Paper No. 49, 24, 2021.

\bibitem[GPS18]{GPS18}
Sanoli Gun, Biplab Paul, and Jyoti Sengupta.
\newblock On {H}ecke eigenvalues of {S}iegel modular forms in the {M}aass
  space.
\newblock {\em Forum Math.}, 30(3):775--783, 2018.

\bibitem[GS17]{GS17}
S.~Gun and J.~Sengupta.
\newblock Sign changes of {F}ourier coefficients of {S}iegel cusp forms of
  degree two on {H}ecke congruence subgroups.
\newblock {\em Int. J. Number Theory}, 13(10):2597--2625, 2017.

\bibitem[GT11]{GT11}
Wee~Teck Gan and Shuichiro Takeda.
\newblock The local {L}anglands conjecture for {${\rm GSp}(4)$}.
\newblock {\em Ann. of Math. (2)}, 173(3):1841--1882, 2011.

\bibitem[Ibu12]{TI12}
Tomoyoshi Ibukiyama.
\newblock Saito-{K}urokawa liftings of level {$N$} and practical construction
  of {J}acobi forms.
\newblock {\em Kyoto J. Math.}, 52(1):141--178, 2012.

\bibitem[IK04]{IK04}
Henryk Iwaniec and Emmanuel Kowalski.
\newblock {\em Analytic number theory}, volume~53 of {\em American Mathematical
  Society Colloquium Publications}.
\newblock American Mathematical Society, Providence, RI, 2004.

\bibitem[IK22]{Ikkat22}
Tamotsu Ikeda and Hidenori Katsurada.
\newblock Estimates for the fourier coefficients of the duke-imamoglu-ikeda
  lift, 2022.

\bibitem[Jes08]{SJ08}
S.~Jesgarz.
\newblock Vorzeichenwechsel von fourierkoeffizienten von siegelschen
  spitzenformen.
\newblock {\em Diploma Thesis (unpublished), University of Heidelberg}, 2008.

\bibitem[Jin19]{SJ19}
Seokho Jin.
\newblock On some first sign change problems of modular forms.
\newblock {\em J. Math. Anal. Appl.}, 472(1):577--583, 2019.

\bibitem[JLS21]{JLS21+}
Jesse Jaasaari, Stephen Lester, and Abhishek Saha.
\newblock On fundamental {F}ourier coefficients of {S}iegel cusp forms of
  degree 2.
\newblock {\em J. Inst. Math. Jussieu}, 2021.
\newblock DOI={10.1017/S1474748021000542}.

\bibitem[Kim03]{kimsar}
Henry Kim.
\newblock Functoriality for the exterior square of {${\rm GL}_4$} and the
  symmetric fourth of {${\rm GL}_2$}.
\newblock {\em J. Amer. Math. Soc.}, 16(1):139--183 (electronic), 2003.
\newblock With appendix 1 by Dinakar Ramakrishnan and appendix 2 by Kim and
  Peter Sarnak.

\bibitem[KLSW10]{KLSW10}
E.~Kowalski, Y.-K. Lau, K.~Soundararajan, and J.~Wu.
\newblock On modular signs.
\newblock {\em Math. Proc. Cambridge Philos. Soc.}, 149(3):389--411, 2010.

\bibitem[KLW13]{KLW13}
W.~Kohnen, Y.-K. Lau, and J.~Wu.
\newblock Fourier coefficients of cusp forms of half-integral weight.
\newblock {\em Math. Z.}, 273(1-2):29--41, 2013.

\bibitem[Koh92]{WK92}
Winfried Kohnen.
\newblock Estimates for {F}ourier coefficients of {S}iegel cusp forms of degree
  two. {II}.
\newblock {\em Nagoya Math. J.}, 128:171--176, 1992.

\bibitem[Koh93]{WK93}
Winfried Kohnen.
\newblock Estimates for {F}ourier coefficients of {S}iegel cusp forms of degree
  two.
\newblock {\em Compositio Math.}, 87(2):231--240, 1993.

\bibitem[Koh04]{Kohnen04}
Winfried Kohnen.
\newblock On the growth of {F}ourier coefficients of certain special {S}iegel
  cusp forms.
\newblock {\em Math. Z.}, 248(2):345--350, 2004.

\bibitem[Koh07]{WK07}
Winfried Kohnen.
\newblock Sign changes of {H}ecke eigenvalues of {S}iegel cusp forms of genus
  two.
\newblock {\em Proc. Amer. Math. Soc.}, 135(4):997--999, 2007.

\bibitem[KP21]{KP21}
Balesh Kumar and Biplab Paul.
\newblock Ramanujan-{P}etersson conjecture for {F}ourier-{J}acobi coefficients
  of {S}iegel cusp forms.
\newblock {\em Bull. Lond. Math. Soc.}, 53(1):274--284, 2021.

\bibitem[KS06]{KS06}
Winfried Kohnen and Jyoti Sengupta.
\newblock On the first sign change of {H}ecke eigenvalues of newforms.
\newblock {\em Math. Z.}, 254(1):173--184, 2006.

\bibitem[KS07]{KS07}
Winfried Kohnen and Jyoti Sengupta.
\newblock The first negative {H}ecke eigenvalue of a {S}iegel cusp form of
  genus two.
\newblock {\em Acta Arith.}, 129(1):53--62, 2007.

\bibitem[KS17]{KS17}
Winfried Kohnen and Jyoti Sengupta.
\newblock Bounds for {F}ourier-{J}acobi coefficients of {S}iegel cusp forms of
  degree two.
\newblock In {\em L-functions and automorphic forms}, volume~10 of {\em
  Contrib. Math. Comput. Sci.}, pages 159--169. Springer, Cham, 2017.

\bibitem[KST12]{KST12}
Emmanuel Kowalski, Abhishek Saha, and Jacob Tsimerman.
\newblock Local spectral equidistribution for {S}iegel modular forms and
  applications.
\newblock {\em Compos. Math.}, 148(2):335--384, 2012.

\bibitem[Li10]{Li10}
Xiannan Li.
\newblock Upper bounds on {$L$}-functions at the edge of the critical strip.
\newblock {\em Int. Math. Res. Not. IMRN}, (4):727--755, 2010.

\bibitem[LWY05]{LWY05}
Jianya Liu, Yonghui Wang, and Yangbo Ye.
\newblock A proof of {S}elberg's orthogonality for automorphic {$L$}-functions.
\newblock {\em Manuscripta Math.}, 118(2):135--149, 2005.

\bibitem[Mar21]{marzec21}
Jolanta Marzec.
\newblock Maass relations for {S}aito-{K}urokawa lifts of higher levels.
\newblock {\em Ramanujan J.}, 55(1):25--51, 2021.

\bibitem[Mat12]{KM12}
Kaisa Matom\"{a}ki.
\newblock On signs of {F}ourier coefficients of cusp forms.
\newblock {\em Math. Proc. Cambridge Philos. Soc.}, 152(2):207--222, 2012.

\bibitem[MR00]{MR00}
M.~Manickam and B.~Ramakrishnan.
\newblock On {S}himura, {S}hintani and {E}ichler-{Z}agier correspondences.
\newblock {\em Trans. Amer. Math. Soc.}, 352(6):2601--2617, 2000.

\bibitem[MR02]{MR02}
M.~Manickam and B.~Ramakrishnan.
\newblock On {S}aito-{K}urokawa correspondence of degree two for arbitrary
  level.
\newblock {\em J. Ramanujan Math. Soc.}, 17(3):149--160, 2002.

\bibitem[MRl15]{MR15}
Kaisa Matom\"{a}ki and Maksym Radziwi\l~\l.
\newblock Sign changes of {H}ecke eigenvalues.
\newblock {\em Geom. Funct. Anal.}, 25(6):1937--1955, 2015.

\bibitem[MRV93]{MRV93}
M.~Manickam, B.~Ramakrishnan, and T.~C. Vasudevan.
\newblock On {S}aito-{K}urokawa descent for congruence subgroups.
\newblock {\em Manuscripta Math.}, 81(1-2):161--182, 1993.

\bibitem[PS]{pslongversion}
Ameya Pitale and Ralf Schmidt.
\newblock Bessel models for {$\rm GSp(4)$}: {S}iegel vectors of square-free
  level (long version).
\newblock available at https://www.math.unt.edu/~schmidt/papers/BSI.pdf.

\bibitem[PS08]{PS08}
Ameya Pitale and Ralf Schmidt.
\newblock Sign changes of {H}ecke eigenvalues of {S}iegel cusp forms of degree
  2.
\newblock {\em Proc. Amer. Math. Soc.}, 136(11):3831--3838, 2008.

\bibitem[PS14]{ps14}
Ameya Pitale and Ralf Schmidt.
\newblock Bessel models for {$\rm GSp(4)$}: {S}iegel vectors of square-free
  level.
\newblock {\em J. Number Theory}, 136:134--164, 2014.

\bibitem[PSS14]{PSS14}
Ameya Pitale, Abhishek Saha, and Ralf Schmidt.
\newblock Transfer of {S}iegel cusp forms of degree 2.
\newblock {\em Mem. Amer. Math. Soc.}, 232(1090):vi+107, 2014.

\bibitem[PSS17]{pssmb}
Ameya Pitale, Abhishek Saha, and Ralf Schmidt.
\newblock Local and global {M}aass relations.
\newblock {\em Math. Z.}, 287(1-2):655--677, 2017.

\bibitem[PY19]{PY19}
Ian Petrow and Matthew~P. Young.
\newblock A generalized cubic moment and the {P}etersson formula for newforms.
\newblock {\em Math. Ann.}, 373(1-2):287--353, 2019.

\bibitem[RS96]{RS96}
Ze\'{e}v Rudnick and Peter Sarnak.
\newblock Zeros of principal {$L$}-functions and random matrix theory.
\newblock {\em Duke Math. J.}, 81(2):269--322, 1996.
\newblock A celebration of John F. Nash, Jr.

\bibitem[RS07]{NF}
Brooks Roberts and Ralf Schmidt.
\newblock {\em Local newforms for {GS}p(4)}, volume 1918 of {\em Lecture Notes
  in Mathematics}.
\newblock Springer, Berlin, 2007.

\bibitem[RSn74]{RS74}
H.~L. Resnikoff and R.~L. Salda\~{n}a.
\newblock Some properties of {F}ourier coefficients of {E}isenstein series of
  degree two.
\newblock {\em J. Reine Angew. Math.}, 265:90--109, 1974.

\bibitem[RSW16]{RSW16}
Emmanuel Royer, Jyoti Sengupta, and Jie Wu.
\newblock Non-vanishing and sign changes of {H}ecke eigenvalues for {S}iegel
  cusp forms of genus two.
\newblock {\em Ramanujan J.}, 39(1):179--199, 2016.
\newblock With an appendix by E. Kowalski and A. Saha.

\bibitem[Sah15]{sahapet}
Abhishek Saha.
\newblock On ratios of {P}etersson norms for {Y}oshida lifts.
\newblock {\em Forum Math.}, 27(4):2361--2412, 2015.

\bibitem[Sch05]{sch05}
Ralf Schmidt.
\newblock Iwahori-spherical representations of {${\rm GSp}(4)$} and {S}iegel
  modular forms of degree 2 with square-free level.
\newblock {\em J. Math. Soc. Japan}, 57(1):259--293, 2005.

\bibitem[Sch17]{Sch17}
Ralf Schmidt.
\newblock Archimedean aspects of {S}iegel modular forms of degree 2.
\newblock {\em Rocky Mountain J. Math.}, 47(7):2381--2422, 2017.

\bibitem[Sch18]{RS18}
Ralf Schmidt.
\newblock Packet structure and paramodular forms.
\newblock {\em Trans. Amer. Math. Soc.}, 370(5):3085--3112, 2018.

\bibitem[Sch20]{schcap}
Ralf Schmidt.
\newblock Paramodular forms in {CAP} representations of {${\rm GSp}(4)$}.
\newblock {\em Acta Arith.}, 194(4):319--340, 2020.

\bibitem[ST21]{ST21+}
K.~D. Shankhadhar and P.~Tiwari.
\newblock On sign changes of primitive fourier coefficients of siegel cusp
  forms.
\newblock {\em arXiv:2103.13995v1}, (March 2021).

\bibitem[Sug85]{sug}
Takashi Sugano.
\newblock On holomorphic cusp forms on quaternion unitary groups of degree
  {$2$}.
\newblock {\em J. Fac. Sci. Univ. Tokyo Sect. IA Math.}, 31(3):521--568, 1985.

\bibitem[Wei92]{Weiss92}
Rainer Weissauer.
\newblock Modular forms of genus {$2$} and weight {$1$}.
\newblock {\em Math. Z.}, 210(1):91--96, 1992.

\bibitem[Wei09]{RW09}
Rainer Weissauer.
\newblock {\em Endoscopy for {${\rm GSp}(4)$} and the cohomology of {S}iegel
  modular threefolds}, volume 1968 of {\em Lecture Notes in Mathematics}.
\newblock Springer-Verlag, Berlin, 2009.

\end{thebibliography}
\bibliographystyle{alpha}

\end{document}